\newcommand{\Omen}{{\Omega_{n}}}
\newcommand{\bi}{\mathbf{i}}
\newcommand{\bj}{\mathbf{j}}
\newcommand{\bl}{\mathbf{l}}
\newcommand{\bn}{\mathbf{n}}
\newcommand{\bx}{\mathbf{x}}
\newcommand{\bJ}{\mathbf{J}}
\newcommand{\bM}{\mathbf{M}}
\newcommand{\bP}{\mathbf{P}}
\newcommand{\bV}{\mathbf{V}}
\newcommand{\cI}{\mathcal{I}}
\newcommand{\YJ}[1]{{\color{blue}{\bf YJ:} #1}}
\newtheorem{thm}{Theorem}
\newtheorem{lem}{Lemma}
\newtheorem{rmk}{Remark}
\title
{An adaptive  high-order  piecewise polynomial based sparse grid collocation method   with  applications}
\author{
Zhanjing Tao
\thanks{Department of Mathematics, Jilin University,
Changchun, Jilin 130012, People’s Republic of China.
 {\tt zjtao@jlu.edu.cn}}.
\and
 Yan Jiang
\thanks{School of Mathematical Sciences, University of Science and Technology of China, Hefei, Anhui 230026, People’s Republic of China
{\tt jiangy@ustc.edu.cn}. Research is supported by NSFC grants 11901555 }
\and
Yingda Cheng
\thanks{Department of Mathematics, Department of  Computational Mathematics, Science and Engineering, Michigan State University,
East Lansing, MI 48824 U.S.A.
 {\tt ycheng@msu.edu}. Research is supported by NSF grants  DMS-1453661 and DMS-1720023.}
}
\date{\today}
\begin{document}
\maketitle

\begin{abstract}
This paper constructs adaptive sparse grid collocation method   onto  arbitrary order piecewise polynomial space.  The sparse grid 
method is a popular technique for high dimensional problems, and the associated collocation method has been well studied in the literature. The contribution of this work is
the introduction of a systematic framework for collocation onto high-order piecewise polynomial space that is allowed to be discontinuous. We consider both Lagrange and Hermite interpolation methods on nested collocation points. Our construction includes a wide range of function space, including those used in sparse grid continuous finite element method. 
Error estimates are provided, and the numerical results in function interpolation, integration and some benchmark problems in uncertainty quantification are used to  compare different collocation schemes.
\end{abstract}

\textbf{Keywords}: High-dimensional model, adaptive sparse grid,  piecewise polynomial, collocation method, multiresolution analysis.

\section{Introduction}
\label{sec:intro}

In this paper, we  consider the discretization of high dimensional problems using the sparse grid method \cite{smolyak1963quadrature, bungartz2004sparse, garcke2013sparse}, which  is a popular technique to reduce the number of degrees of freedom (DoF)  based on a tensor product hierarchical basis representation.  The specific objective of this work is to introduce a class of high order  hierarchical interpolating basis, and develop adaptive sparse grid collocation methods onto   piecewise polynomial space, as a continued effort in   developing sparse grid discontinuous Galerkin (DG) methods.
In our previous work \cite{sparsedgelliptic, guo2016sparse,guo2017adaptive}, we introduced the  sparse grid DG methods for  solving high dimensional PDEs, and show that the methods have significantly  reduced DoFs for the unknowns, and can maintain similar stability and conservation properties of DG methods using the Galerkin framework. Adaptivity can be  incorporated naturally treating solutions with less smoothness or local structures. However, the methods can not be efficiently applied for truly nonlinear problems. Examples include representation of a nonlinear function $f(u_h)$ onto the sparse grid finite element space on the computational domain. Even if $u_h$ belongs to the sparse grid space, $f(u_h)$ is not,  therefore a naive implemenation requires computational cost that is proportional to the number of elementary cells, i.e. $O(h^{-d})$ operations,   where $d$ is the number of dimensions, and $h$ is the mesh size in each dimension.

Collocation is a natural way to treat nonlinearity.  (Adaptive) sparse grid collocation  are well developed \cite{barthelmann2000high,xiu2005high, nobile2008sparse, ma2009adaptive}, and the most popular nested collocation methods include the ones based on Newton-Cotes and  Clenshaw-Curtis points.   Clenshaw-Curtis rule uses spectral (Chebyshev) approximations, while  Newton-Cotes rule is based on  continuous piecewise linear polynomial approximations and is more flexible with adaptivity.   We are interested in piecewise higher order polynomial approximations. In this context,    \cite{bungartz1996concepts} proposed a $P^2$ continuous sparse grid FEM bases. \cite{alpert2002adaptive} contains a non-nested collocation method on Gauss-Legendre quadrature points. 
Higher order continuous piecewise polynomial basis was constructed in \cite{bungartz1998finite}, see also  \cite{jakeman2012local} for its application in uncertainty quantification (UQ) and \cite{bungartz2004sparse} for a summary and extended reference. However, the high order Lagrangian interpolant constructed in \cite{bungartz1998finite} is not local, i.e. nodes that are outside of the current cell need to be used, thus the bases at the coarsest level are lower order accurate. 
It is also well known that the construction of sparse grid relies on  multiresolution analysis (MRA)  \cite{mallat1999wavelet} as building blocks. Relevant work in the literature  includes  interpolating wavelet transform 
\cite{donoho1992interpolating} and interpolatory MRA  which is considered by Harten \cite{harti1993discrete, harten1995multiresolution,harten1996multiresolution}  for PDE applications. The interpolation operator in \cite{harten1996multiresolution} includes a variety of discretizations, such as polynomial, spline, ENO and trigonometric interpolations. 

The focus of this work  is to construct adaptive sparse grid collocation methods based on interpolatory MRA onto  arbitrary order piecewise polynomial space with nested collocation points. Here, our interpolation will be local in contrast to \cite{bungartz1998finite, harti1993discrete}, meaning that we do not need to extend the stencils when we go higher order. Instead, more DoFs are placed inside the current cell like the DG method. This results in a more compact scheme. We require the collocation points to be nested to save computational cost. No continuity across the cell interface   is assumed. However, the methods we propose will include  the family of  sparse grid continuous finite element space. We consider  Lagrange and also Hermite interpolation as in \cite{stortkuhl1994numerisches, warming2000discrete}. Our construction is  systematic, and works for arbitrary high order accuracy. In particular, we follow the following steps: (1) locating nested interpolation points, (2) finding associated multiwavelet bases in 1D, (3) using Smolyak's idea to gain sparsity in high dimensions, (4) achieving adaptivity by measuring hierarchical surplus. Fast transforms between point values and coefficients are introduced with operation counts of $O(d\cdot \textrm{DoF} ).$ Theoretical justification will be provided, and applications in stochastic  differential equations are considered. We note that many recent work in UQ has considered more efficient  collocation schemes in higher dimensions and for functions with singularities \cite{wan2005adaptive, foo2008multi, foo2010multi, ma2010adaptive, bhaduri2018stochastic}. Those techniques are not explored in this work.  Rather, our main  motivation  is the design of adaptive multiresolution DG methods, which  is considered separately in  another work \cite{huang2019adaptive}.

The rest of the paper is organized as follows: in Section \ref{sec:method}, we introduce one-dimensional MRA on piecewise polynomial space. Section \ref{sec:multid} contains the discussion of multi-dimensional sparse grid and adaptive sparse grid collocation schemes. The methods are validated numerically in Section \ref{sec:numerical}. We conclude the paper in Section \ref{sec:conclusion}.

\section{One-dimensional interpolatory MRA}
\label{sec:method}


In this section, we  introduce the nested grids and collocation points, and the corresponding hierarchical bases in one dimension. Without loss of generality, we consider the interval $I=[0,1]$. A multi-resolution interpolation method  will be introduced. 

\subsection{Nested collocation points}
To begin, we define a set of {\em nested grids}, where the $n$-th level grid $\Omega_{n}$ consists of $2^n$ uniform cells $I_{n}^{j}= \left( 2^{-n}j, 2^{-n}(j+1) \right)$, $j=0, \ldots, 2^n-1$, for any $n\geq 0$
with cell size $h_{n}=2^{-n}$.
We can define $P+1$ distinct points within each cell of $\Omen$ with the same relative locations, 
\begin{align}
\label{eq:point}
	x^{j}_{i,n} = 2^{-n} (j + x^{0}_{i,0}) 
\end{align}
where $x^{0}_{i,0}\in[0,1]$, $i=0, \ldots, P,$ and $i$ numbers the relative location of the points within the cell. In this paper, we consider  general functions that are supported on the grid $\Omega_N$ and are allowed to be discontinuous at the interface of $\Omega_N,$ where $N$ is a prescribed integer. This is particularly needed for the implementation of multiresolution DG scheme \cite{huang2019adaptive}. In definition \eqref{eq:point}, if the point $x^{j}_{i,n}$ lands on the interface of $\Omega_N,$ it should be defined either as the left or right limit point.
In particular, the collection of those points 
\begin{align}
	X^{P}_n=\{x^{j}_{i,n}, \, i=0,\ldots, P, \, j=0, \ldots, 2^n-1 \}
\end{align}
is called {\em nested points}, if 
\begin{align}
\label{eq:nested_grids2}
	X^{P}_0 \subset X^{P}_{1} \subset X^{P}_{2} \subset \cdots.
\end{align}
This means, for any point $x^{j}_{i,n-1}\in X^{P}_{n-1}$, we can always find an integer $r\in\{0,\ldots,P\}$, such that
\begin{align}
\label{eq:nested_relation1}
	x_{i,n-1}^{j} = x_{r,n}^{2j} \quad \text{or} \quad x_{i,n-1}^{j} = x_{r,n}^{2j+1}.
\end{align} 
The choice of $\{x^{0}_{i,0}\}$ always exists so that \eqref{eq:nested_grids2} can be satisfied. We have the following lemma to quantify the total number of possible choices for each $P.$

\begin{lem} \label{lem1}
	For any $P \geq 0$, we have $ \begin{pmatrix} 2P+2 \\ P+1\\	\end{pmatrix} - 2  \begin{pmatrix} 2P \\ P-1\\	\end{pmatrix} + \begin{pmatrix} 2P-2 \\ P-3\\	\end{pmatrix} $ types of   nested points. Here,
	$$ \begin{pmatrix} N \\ n\\	\end{pmatrix} = \left\{ \begin{array}{ll} 
	\frac{N!}{n! (N-n)!}, &  0<n<N,\\
	1, & n=0, N,\\
	0, & n<0.\\
	\end{array}\right. $$
\end{lem}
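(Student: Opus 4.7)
The plan is to translate the nestedness condition into a condition on $S := \{x^0_{0,0}, \ldots, x^0_{P,0}\}$. By \eqref{eq:nested_relation1}, $X^P_0 \subset X^P_1$ is equivalent to $T(S) \subseteq S$, where $T:[0,1]\to[0,1]$ is the doubling map $T(x)=2x \bmod 1$, with the paper's interface convention making both $0$ and $1$ fixed points and permitting $T(1/2)\in\{0,1\}$ according to the left/right limit choice. Counting types of nested points is thereby reduced to enumerating finite $T$-invariant subsets of $[0,1]$, together with the admissible interface-label assignments at dyadic rationals on interfaces of $\Omega_N$.

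Next, I would decompose any finite $T$-invariant set $S$ into its orbit structure: $S$ is a disjoint union of components, each consisting of a periodic cycle of $T$ together with a rooted tree of preperiodic points attached at each cycle node. Since every $y \in [0,1]$ has exactly two $T$-preimages $y/2$ and $(y+1)/2$, and one of them is already used along the cycle, the tree at each cycle node is a rooted plane binary tree with L/R-labeled children, enumerated by the Catalan number $C_m$ for size $m$. Writing $N(k) = \tfrac{1}{k}\sum_{d\mid k}\mu(k/d)\,2^d$ for the number of $T$-cycles of length $k$ (with $N(1)=2$ coming from the two fixed points $\{0\}$ and $\{1\}$) and $u = z\,C(z)$, the generating function for nested sets becomes $H(u) = \prod_{k\ge 1}(1+u^k)^{N(k)}$. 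The classical identity $\prod_k(1-u^k)^{N(k)} = 1-2u$, equivalent to $\sum_{d\mid n} d\,N(d) = 2^n$, combined with the algebraic relation $(1-2u)^2 = 1-4z$ implied by $u(1-u)=z$, then collapses $H$ to an elementary function of $z$ from which the total count per size can be read off.

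The hard part is to match the resulting coefficient with the specific three-term expression $\binom{2P+2}{P+1} - 2\binom{2P}{P-1} + \binom{2P-2}{P-3}$. A double application of Pascal's rule gives $\binom{2P+2}{P+1} - 2\binom{2P}{P-1} = 2\binom{2P}{P}$, which is the bulk contribution arising directly from the generating function; the residual term $\binom{2P-2}{P-3}$ should come from a refined bookkeeping of the interface-label freedom, which contributes extra configurations whenever both $0$ and $1$ lie in $S$ together with the common preimage $1/2$ (and, for larger $P$, at dyadic preimages further up). I would complete the proof either by a direct case analysis organized by cycle structure, tracking the additional labelings at interface-ambiguous points and assembling the count cycle-by-cycle and tree-by-tree, or by exhibiting a sign-reversing involution / inclusion–exclusion that realizes the three-term alternating sum as a direct enumeration on a suitable space of decorated sequences of length $2P+2$. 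The delicate interface bookkeeping, which is precisely what distinguishes the stated three-term formula from the plainer $2\binom{2P}{P}$, is the step I expect to be the main obstacle.
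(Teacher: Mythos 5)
Your route is genuinely different from the paper's. The paper's proof is short and elementary: it orders the $2P+2$ level-one points $\alpha_0/2<\cdots<\alpha_P/2\le(1+\alpha_0)/2<\cdots<(1+\alpha_P)/2$, observes that nestedness amounts to selecting which $P+1$ of these slots constitute $X^P_0$ (each selection being a linear system with a unique solution), obtains $\binom{2P+2}{P+1}$, and then removes by inclusion--exclusion the selections that force the degeneracies $\alpha_0=\alpha_1=0$ (the first two slots both chosen, $\binom{2P}{P-1}$ cases) or $\alpha_{P-1}=\alpha_P=1$, with overlap $\binom{2P-2}{P-3}$. Your dynamical reformulation ($T$-invariant finite sets, cycle-plus-tree decomposition, $\prod_k(1-u^k)^{N(k)}=1-2u$ with $u=zC(z)$) is correct as far as it goes and is an attractive structural alternative; your identity $\binom{2P+2}{P+1}-2\binom{2P}{P-1}=2\binom{2P}{P}$ and the evaluation $H=1+2z/\sqrt{1-4z}$ are both right. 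But the proof is incomplete at exactly the step you flag, and that step cannot be repaired the way you hope. In the doubled interval that encodes the paper's left/right-limit convention, $(1/2)^-$ and $(1/2)^+$ are two honest points, each with a well-defined image ($1^-$ and $0^+$ respectively) and exactly two preimages; the set $\{0^+,(1/2)^-,(1/2)^+,1^-\}$ is therefore already counted once in your orbit decomposition, and there is no residual ``interface-label freedom'' left to harvest. Your method, completed, yields exactly $2\binom{2P}{P}$ and nothing more.

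The deeper point is that the residual term $\binom{2P-2}{P-3}$ is not something your count is missing; it is an overcount in the paper's inclusion--exclusion, which assumes the only degenerate slot selections are those pinned at the endpoints $0$ and $1$. This fails from $P=3$ on. For $P=3$, the selection $\alpha_0=\alpha_2/2$, $\alpha_1=\alpha_3/2$, $\alpha_2=(1+\alpha_0)/2$, $\alpha_3=(1+\alpha_1)/2$ is not excluded by the paper's argument, yet its unique solution is the multiset $\{1/3,1/3,2/3,2/3\}$ (a ``doubled $2$-cycle''), which violates the distinctness of the $P+1$ points; $1/3$ is not dyadic, so no limit convention separates the coincident points. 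A direct enumeration for $P=3$ (either by listing the $41$ selections the paper retains and discarding this one, or by your cycle-and-tree decomposition: $3$ sets containing a $4$-cycle, $10$ containing a $3$-cycle, $12$ containing the $2$-cycle, $15$ built on the fixed points only) gives $40=2\binom{6}{3}$, whereas the lemma's formula gives $41$. So the ``main obstacle'' you identified is real but unwinnable: you should not try to match the three-term expression, and instead either report the clean count $2\binom{2P}{P}$ your method produces or, if you follow the paper's slot-selection route, extend the exclusion to all internally degenerate selections (repeated cycles), not just the two endpoint ones.
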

\begin{proof}
The proof is given in Appendix \ref{sec:append2}.
\end{proof}

We should note that when $\{X^{P}_{n}\}$ are nested, the points can be rearranged in such a way that
\begin{align}
\label{eq:nested_grids}
	X^{P}_{n} = X^{P}_{0} \cup \widetilde{X}^{P}_{1} \cup \cdots \cup \widetilde{X}^{P}_n, \quad\text{with} \ \widetilde{X}^{P}_n= X^{P}_n / X^{P}_{n-1} .
\end{align}
We can denote the points in $\widetilde{X}^{P}_{1} = X^{P}_1 / X^{P}_{0} = \{ \tilde{x}^{0}_{0,1}, \ldots, \tilde{x}^{0}_{P,1} \}$, then the points in $\widetilde{X}^{P}_n$ for $n\geq 1$ can be represented by
\begin{align}
	\widetilde{X}^{P}_n = \{ \tilde{x}^{j}_{i,n}:= 2^{-(n-1)} (j+\tilde{x}^{0}_{i,1}),  \, i=0,\ldots, P, \, j=0, \ldots, 2^{n-1}-1 \}.
\end{align}
Note that $\tilde{x}^{j}_{i,n}\in I^{j}_{n-1}$. For notational convenience, we extend the definition of $\widetilde{X}^P_n$ and $\tilde{x}^{j}_{i,n}$ to all $n\geq0$ by defining
\begin{align}
	\widetilde{X}^{P}_{n}=\left\{ \begin{array}{ll} 
	X^{P}_{0}, & n=0,\\
	\widetilde{X}^{P}_{n}, & n\geq1,\\
	\end{array}\right. \quad 
	\tilde{x}^{j}_{i,n}=\left\{ \begin{array}{ll} 
	x^{j}_{i,0}, & n=0,\\
	\tilde{x}^{j}_{i,n}, & n\geq1,\\
	\end{array}\right. \, j=0,\ldots,\max(2^{n-1}-1,0).
\end{align}
In Appendix \ref{sec:append1}, we provide some examples of nested collocation point sets.

Finally, we would like to introduce the special level ``-1". This technique has been used in \cite{garcke2006dimension} to further reduce DoFs for high dimensional problems. We define $\widetilde{X}^{P}_{-1}=\{ \tilde{x}^{0}_{0,-1} = x^{0}_{i*,0} \}$ consists of a single point. Here, $x^{0}_{i*,0} $ is a point chosen from $X^{P}_0$ arbitrarily. It would be specified in this work later.

\subsection{MRA}
In this subsection, we will introduce MRA   of piecewise polynomial spaces associated with the nested collocation points. We define  the space of piecewise polynomial functions of degree at most $K$ on $\Omen$ by
\begin{align}
\label{eq:polynomial}
	V^{K}_{n} = \{v: v \in P^{K}( I^{j}_{n}),\, j=0,\ldots,2^n-1\}.
\end{align}

We consider using the Lagrange $(M=0)$ or Hermite $(M\geq1)$ interpolating polynomials on point set $X^{P}_{n}$  with respect to the first $M$ derivatives as basis functions, denoted as $\phi^{j}_{i,l,n}(x)$:
\begin{align}
	\partial_{x}^{l'}\phi^{j}_{i,l,n}(x^{j}_{i',n}) = \delta_{ll'}  \delta_{ii'}, \quad i,i'=0,\ldots,P, \, \text{and} \,\, l,l'=0,\ldots,M, 
\end{align}
where $\partial_{x}^{l'}$ denotes the $l'$-th derivative operator and the Kronecker delta is defined by $\delta_{ii'}=\left\{ \begin{array}{ll} 1, & i=i', \\ 0, & i\neq i'.\\ \end{array} \right.$ 
We can see that the degree of $\phi^{j}_{i,l,n}$ is  $K=(P+1)(M+1)-1$. 
Those bases can be obtained by a rescaling of  $\{\phi_{i,l}, i=0,\ldots,P, \, l=0,\ldots, M\},$ which are the Lagrange or Hermite interpolating polynomials defined on $I=[0,1]$, satisfying 
$$\partial_{x}^{l'}\phi_{i,l}(x_{i'})= \delta_{ll'} \,\delta_{ii'}.$$
Then we have the relation
\begin{align}
	\phi^{j}_{i,l,n}(x) = 2^{-nl} \phi_{i,l}(2^n x-j),
\end{align}
and
\begin{align}
\label{eq:polynomial2}
	V^{K}_{n} = span\{\phi^{j}_{i,l,n},\, i=0, \ldots P, \, l=0, \ldots M, \,j=0,\ldots2^n-1\}.
\end{align}


Moreover, we can now define the subspace $W^K_n$, $n\geq1$, as the complement of $V^K_{n-1}$ in $V^K_n$, in which the piecewise polynomials and their derivatives vanish at all points in $X^{P}_{n-1}$,
\begin{align}
\label{eq:w}
	V^{K}_{n} = V^{K}_{n-1}\oplus W^{K}_{n}, \quad W^{K}_{n}=span\{\phi^{j}_{i,l,n}: x^{j}_{i,n} \in \widetilde{X}^{P}_{n}, l=0,\ldots,M\}.
\end{align} 

\noindent
We   now provide the details of the multiwavelet bases of $W^K_n.$ This can be achieved by specifying the bases of $W^K_1,$ which are defined by $\{\varphi_{i,l},  \,i=0,\ldots,P, l=0,\ldots,M\}$ as
\begin{align*}
	\varphi_{i,l}=\left\{ \begin{array}{ll}
	\phi^{0}_{r,l,1}, & x\in(0,1/2),\\
	0, & x\in(1/2,1),\\
	\end{array}\right. \quad \text{if $\tilde{x}^{0}_{i,1}=x^{0}_{r,1}$},
\end{align*} 
or
\begin{align*}
	\varphi_{i,l}=\left\{ \begin{array}{ll}
	0, & x\in(0,1/2),\\
	\phi^{1}_{r,l,1}, & x\in(1/2,1),\\
	\end{array}\right.\quad \text{if $\tilde{x}^{0}_{i,1}=x^{1}_{r,1}$,}
\end{align*}
where $r$ is an integer from $\{0,\ldots,P\}$.
Clearly, $\varphi_{i,l}$ satisfies
\begin{align*}
		\partial_{x}^{l'}\varphi_{i,l}(x^0_{i',0})= 0, \qquad	
		\partial_{x}^{l'}\varphi_{i,l}(\tilde{x}^0_{i',1})= \delta_{ll'} \,\delta_{ii'}.
\end{align*} 

\noindent 
As a result, for $n\geq1$,  we have that
\begin{align}
\label{eq:basis_mapping}
 	W^{K}_{n} = span \{ \varphi^{j}_{i,l,n}: \varphi^{j}_{i,l,n}=2^{-l(n-1)}\varphi_{i,l}(2^{(n-1)} x-j), \, i=0,\ldots,P, \notag\\ \, l=0,\ldots,M, \, j=0,\ldots,2^{n-1}-1\}.
\end{align}



We can now extend the definition to the $0$-th level by defining
\begin{align}
W^{K}_{n}=\left\{ \begin{array}{ll} 
V^{K}_{0}, & n=0\\
W^{K}_{n}, & n\geq1,\\
\end{array}\right. \quad
\varphi^{j}_{i,l,n}=\left\{ \begin{array}{ll} 
\phi^{0}_{i,l,0}, & n=0,\\
\varphi^{j}_{i,l,n}, & n\geq1.\\
\end{array}\right. 
\end{align} 

\noindent
Thus, we have
\begin{align*}
V^{K}_{N} =  \bigoplus_{0\leq n\leq N} W^{K}_{n},
\end{align*} 
which shows a hierarchical representation of the standard piecewise polynomial space $V^K_N$.  

Finally, we incorporate the ``-1" level by further decomposing the space on the $0$-th level.  We set $W^K_{c,-1}$ as the space of constant function  on $[0,1]$, and the basis function is denoted by $\varphi^{0}_{c,0,0,-1}=1.$ Consequently, the ``corrected" space at level 0 is denoted by $W^{K}_{c,0}=W^{K}_{0}/W^K_{-1}$, with the basis functions $\{\varphi^{0}_{i,l,0}, i\neq i^* \ \text{or} \ l\neq0 \}.$ For convenience, we define
\begin{align}
	\varphi^{0}_{c,i,l,0} = \left\{ \begin{array}{ll}
	0, & \text{if} \, i=i^* \, \text{and} \, l=0, \\
	\varphi^{0}_{i,l,0}, & \text{otherwise}, \\
	\end{array} \right. \quad i=0, \cdots, P, \, l = 0, \cdots, M .
\end{align}
and for $n\geq1,$ we let $\varphi^{j}_{c,i,l,n} =\varphi^{j}_{i,l,n}, W^{K}_{c,n}=W^{K}_{n}.$ 
Therefore, now we have
$$V^{k}_{N} = W^{K}_{c,-1} \oplus W^{K}_{c,0} \oplus \cdots \oplus W^{K}_{c,N}. $$

\subsection{Multiresolution interpolation}
In this subsection, we define the multiresolution interpolation operator and the fast transform between function and derivative values with the hierarchical coefficients.
For a given function $f(x)\in C^{M}(\Omega_N)$, which is the piecewise $C^M$ function space on grid $\Omega_N,$ we can define $\mathcal{I}^{P,M}_{n}[f]$ as the standard Lagrange or Hermite interpolation on $V^{K}_{n}$:
\begin{align}
	\label{eq:1D_interpolation1}
	\mathcal{I}^{P,M}_n[f](x) 
	= \sum_{j=0}^{2^n-1} \sum_{l=0}^{M} \sum_{i=0}^{P} f^{(l)}(x^{j}_{i,n}) \phi^{j}_{i,l,n}(x),
\end{align}
such that 
\begin{align}
	\partial_{x}^l \left(\mathcal{I}^{P,M}_{n}[f] \right) (x_{i,n}^{j}) = f^{(l)}(x_{i,n}^{j}), \quad l=0,\ldots,M, \quad x_{i,n}^{j}\in X_{n}^P.
\end{align} 
Here, $f^{(l)}$ is the $l$-th derivative $\partial_{x}^{l} f$. Note that when the points $x_{i,n}^{j}$ contains the left or right limit, the function and derivative values should be read accordingly.

We can define  the increment interpolation operator  
\begin{align}
	\label{eq:increment_operator}
	\widetilde{\mathcal{I}}^{P,M}_{n}:=\left\{ \begin{array}{ll}
	\mathcal{I}^{P,M}_{0}, & n=0, \\
	\mathcal{I}^{P,M}_{n}-\mathcal{I}^{P,M}_{n-1}, & n\geq 1.\\
	\end{array}\right. 
\end{align}
Hence,
$
	\mathcal{I}^{P,M}_{N}[f](x) 
	= \sum_{n=0}^{N} \widetilde{\mathcal{I}}^{P,M}_{n}[f](x).
$
Furthermore, the interpolation operator $\mathcal{I}^{P,M}_{N}$ can be represented by multiwavelet bases
\begin{align}
	\label{eq:1D_interpolation2}
	\mathcal{I}^{P,M}_{N}[f](x) 
	= \sum_{n=0}^{N} \widetilde{\mathcal{I}}^{P,M}_{n}[f](x)
	= \sum_{n=0}^{N} \sum _{j=0}^{\max(2^{n-1}-1,0)} \sum_{l=0}^{M} \sum_{i=0}^{P} b^{j}_{i,l,n} \varphi^{j}_{i,l,n}(x).
\end{align}

%

The transform between the function and derivative values and the hierarchical coefficients can be computed fast using the pyramid scheme, which is illustrated in detail below.
First, we  compute the hierarchical coefficients given the function and derivative values. 
Note that an important property of the hierarchical bases is that  for any basis function $\varphi^{j}_{i,l,n}$,   its derivatives and itself will vanish on all point in $\widetilde{X}^P_{m}$, $m\leq n$ except for $\tilde{x}^{j}_{i,n}$. Hence, it is straightforward that
\begin{align*}
	f^{(l)}(\tilde{x}^{0}_{i,0}) = \partial_x^l \mathcal{I}^{P,M}_{N}[f] (\tilde{x}^{0}_{i,0}) 
	= b_{i,l,0}^{0} \,  \partial_{x}^{l}\varphi^{0}_{i,l,0} (\tilde{x}^{0}_{i,0})
	= b^{0}_{i,l,0}  .
\end{align*}
While for $n\geq1$ and $\tilde{x}^{j}_{i,n}\in\widetilde{X}^P_{n}$, we have
\begin{align*}
	f^{(l)}(\tilde{x}^{j}_{i,n}) 
	=& \partial_x^l \mathcal{I}^{P,M}_{N}[f] (\tilde{x}^{j}_{i,n}) \\
	=&  \sum_{n'=0}^{n-1} \sum_{j'=0}^{\max(2^{n'-1}-1,0)} \sum_{l'=0}^{M} \sum_{i'=0}^{P}  b_{i',l',n'}^{j'} \, \partial_{x}^{l}\varphi^{j'}_{i',l',n'}(\tilde{x}^{j}_{i,n})
   + b^{j}_{i,l,n} \, \partial_{x}^{l} \varphi^{j}_{i,l,n}(\tilde{x}^{j}_{i,n}) \notag \\
=& \partial_x^l \mathcal{I}^{P,M}_{n-1}[f](\tilde{x}^{j}_{i,n}) + b^{j}_{i,l,n} \\
=& \sum_{l'=0}^{M} \sum_{i'=0}^{P}  f^{(l')}(x^{j}_{i',n-1}) \, \partial_{x}^{l}\phi^{j}_{i',l',n-1} (\tilde{x}^{j}_{i,n}) + b^{j}_{i,l,n}  \\
=& \sum_{l'=0}^{M} \sum_{i'=0}^{P}  2^{n(l-l')} f^{(l')}(x^{j}_{i',n-1})  \partial_{x}^{l}\phi_{i',l'}(\tilde{x}^{0}_{i,1})
+ b^{j}_{i,l,n}.
\end{align*}
In summary, we can define an operator $\mathcal{F}^{-1}$ mapping from  $f^{(l)}(x^{j}_{i,n})$ to hierarchical coefficients $b^{j}_{i,l,n},$ for $\forall i, l, j,$
\begin{align}
\label{eq:1D_coeff}
	b^{j}_{i,l,n} 
	= \mathcal{F}^{-1}[f]  
	=
	\left\{ \begin{array}{ll}
	f^{(l)}(\tilde{x}^{j}_{i,0}) , & n=0,\\ 
	\displaystyle	f^{(l)}(\tilde{x}^{j}_{i,n}) -   \sum_{l'=0}^{M} \sum_{i'=0}^{P}  2^{(n-1)(l-l')} f^{(l')}(x^{j}_{i',n-1})  \partial_{x}^{l}\phi_{i',l'}(\tilde{x}^{0}_{i,1}), & n\geq1.\\
	\end{array} \right. 
\end{align}
For any given function $f(x)$, the interpolation coefficients $b^{j}_{i,l,n}$ in \eqref{eq:1D_interpolation2} can be obtained by \eqref{eq:1D_coeff}, 
with the pyramid scheme
$$\begin{tikzcd}
	 f^{(l)}(x^{j}_{i,0}) \arrow[rd] \arrow[d] & f^{(l)}(x^{j}_{i,1})  \arrow[rd] \arrow[d] & 
	 f^{(l)}(x^{j}_{i,2})  \arrow[rd] \arrow[d]  & \cdots  \\  
	b^{j}_{i,l,0}   & b^{j}_{i,l,1}  & b^{j}_{i,l,2}  & \cdots 
\end{tikzcd}$$ 
where we only need to pre-store the coefficients $ \partial_{x}^{l}\phi_{i',l'}(\tilde{x}^{0}_{i,1}),$ and the computational cost scales as $\mathcal{O}(2^N(K+1)^2)$.


On the other hand, suppose we have the piecewise polynomial 
\begin{align}
\label{eq:function}
	f(x)=\sum_{n=0}^{N} \sum _{j=0}^{\max(2^{n-1}-1,0)} \sum_{l=0}^{M} \sum_{i=0}^{P} b_{i,l,n}^{j} \varphi^{j}_{i,l,n}(x) \in V^{K}_{N},  
\end{align}
with given coefficients $b^{j}_{i,l,n},$ 
we can obtain the point values with incrementing level $n$ by using similar argument, so that for $\forall i, l, j,$
\begin{align}
\label{eq:1d_pointvalues}
	f^{(l)}(\tilde{x}^{j}_{i,n}) 
	=& \sum_{n'=0}^{n} \sum _{j'=0}^{\max(2^{n'-1}-1,0)} \sum_{l'=0}^{M} \sum_{i'=0}^{P} b_{i',l',n'}^{j'}  \,  \partial_{x}^{l}\varphi^{j'}_{i',l',n'}(\tilde{x}^{j}_{i,n})   \notag\\
	=& \mathcal{F}[b] = \left\{ \begin{array}{ll}
	b^{0}_{i,l,0} , & n=0, \\ 
	\displaystyle	b^{j}_{i,l,n} + \sum_{l'=0}^{M} \sum_{i'=0}^{P}  2^{(n-1)(l-l')} f^{(l')}(x^{j}_{i',n-1}) \partial_{x}^{l}\phi_{i',l'}(\tilde{x}^{0}_{i,1}) , & n\geq1.\\ 
	\end{array}\right.
\end{align}
This procedure can be carried out by the following pyramid scheme with total computational cost  $\mathcal{O}(2^N(K+1)^2)$.
$$\begin{tikzcd}
	b^{j}_{i,l,0} \arrow[d]  & b^{j}_{i,l,1} \arrow[d]  & b^{j}_{i,l,2} \arrow[d]  & \cdots \\
 	f^{(l)}(\tilde{x}^{j}_{i,0}) \arrow[r] & f^{(l)}(\tilde{x}^{j}_{i,1})  \arrow[r] & 
	f^{(l)}(\tilde{x}^{j}_{i,2})  \arrow[r]   & \cdots  \\  
\end{tikzcd}$$


In addition, if the level ``-1" is taken into account, we will split $\mathcal{I}^{P,M}_{0}$ into $\widehat{\mathcal{I}}^{P,M}_{c,-1}$ and $\widehat{\mathcal{I}}^{P,M}_{c,0}$, with
\begin{align}
\widehat{\mathcal{I}}^{P,M}_{c,-1}[f](x)=f(\tilde{x}^{0}_{0,-1}),  \quad \text{and} \quad
\widehat{\mathcal{I}}^{P,M}_{c,0}=\mathcal{I}^{P,M}_{0} - \widehat{\mathcal{I}}^{P,M}_{c,-1}.
\end{align}
And we define $\widehat{\mathcal{I}}^{P,M}_{c,n}=\widehat{\mathcal{I}}^{P,M}_{n}$, $n\geq1$, for notation convenience.
Then the interpolation $\mathcal{I}^{P,M}_{N}$ can be rewritten as 
\begin{align}
\label{eq:1D_interpolation3}
		\mathcal{I}^{P,M}_{N}[f](x) 
	=& \widehat{\mathcal{I}}^{P,M}_{c,-1}[f](x) + \widehat{\mathcal{I}}^{P,M}_{c,0}[f](x) + \sum_{n=1}^{N} \widehat{\mathcal{I}}^{P,M}_{n}[f](x) \notag \\
	=& b^{0}_{c,0,0,-1} \varphi^{0}_{c,0,0,-1}(x) + \sum_{l=0}^{M} \sum_{i=0}^{P} b^{0}_{c,i,l,0} \varphi^{0}_{c,i,l,0}(x) 
	+ \sum_{n=1}^{N} \sum _{j=0}^{2^{n-1}} \sum_{l=0}^{M} \sum_{i=0}^{P} b^{j}_{c,i,l,n} \varphi^{j}_{c,i,l,n}(x).
\end{align}

\noindent
Using the definition, we know that $b^{0}_{c,i*,0,0}=0$ and $b^{j}_{c,i,l,n}=b^{j}_{i,l,n}$ for $n\geq1$ always holds. We denote the corrected mapping between the hierarchical coefficients and point values as $\mathcal{F}_c / \mathcal{F}_c^{-1}$. Moreover, they are the same as $\mathcal{F}$ \eqref{eq:1d_pointvalues} or $\mathcal{F}^{-1}$ \eqref{eq:1D_coeff} for $n\geq1$, and the corresponding parts for $n=-1, 0$ are replaced by
\begin{align}
\label{eq:forward_1D2}
	f^{(l)}(\tilde{x}^{0}_{i,n}) =& \mathcal{F}_{c}[b_c] 
	=\left\{ \begin{array}{ll}
	b^{0}_{c,0,0,-1}, & n=-1, \, l=0, \, \text{and} \, i=0, \\
	b^{0}_{c,0,0,-1}+b^{0}_{c,i,0,0} , & n=0, \, l=0, \, \text{and} \, i\neq i^*,\\
	b^{0}_{c,i,l,0}, & n=0, l\neq 0, \\
	\end{array}\right.
\end{align}
and
\begin{align}
\label{eq:backward_1D2}
	b^{0}_{c,i,l,n} =& \mathcal{F}_{c}^{-1}[f] 
	= \left\{ \begin{array}{ll}
	f(\tilde{x}^{0}_{0,-1}), & n=-1, \, l=0, \, \text{and} \, i=0, \\
	f(\tilde{x}^{0}_{i,0}) - f(\tilde{x}^{0}_{0,-1}), & n=0, \, l=0, \, \text{and} \, i\neq i^*,\\
	f^{(l)}(\tilde{x}^{0}_{i,0}), & n=0,\, l\neq0. \\
	\end{array} \right. 
\end{align}

\begin{rmk}
	We want to remark that for simplicity, we have defined the Hermite interpolation \eqref{eq:1D_interpolation1} based on a fixed value of $M$ for all points.  In general, some points may have more known derivatives than others. In this case, in order for the nested structure to hold, we require 
	\begin{align*}
		M_{i} \leq M_{r}, \quad \text{if} \quad x^{j}_{i,n-1} = x^{2j}_{r,n} \, \,\text{or} \,\, x^{j}_{i,n-1} = x^{2j+1}_{r,n}.
	\end{align*}
	Then, the hierarchical basis and fast transforms    can be obtained similarly. 
	\end{rmk}

Finally, numerical quadrature of the function $f(x)$ on $[0,1]$ can be obtained by computing the integral of the interpolation function \eqref{eq:1D_interpolation2} or \eqref{eq:1D_interpolation3}  
\begin{align}
\label{eq:integral}
	\int_{0}^{1} \mathcal{I}^{P,M}_N [f](x) dx 
	=&  \sum_{n=0}^{N} \sum _{j=0}^{\max(2^{n-1}-1,0)} \sum_{l=0}^{M} \sum_{i=0}^{P} \omega^{j}_{i,l,n} b^{j}_{i,l,n}  \notag \\
	=&   \omega^{0}_{c,0,0,-1} b^{0}_{c,0,0,-1} 
	+ \sum_{n=0}^{N} \sum _{j=0}^{\max(2^{n-1},0)} \sum_{l=0}^{M} \sum_{i=0}^{P} \omega^{j}_{c,i,l,n} b^{j}_{c,i,l,n}.
\end{align} 
Here, the quadrature weights $\omega^{j}_{i,l,n} = \int_{0}^{1}\varphi^{j}_{i,l,n}(x)dx$ and $\omega^{j}_{c,i,l,n} = \int_{0}^{1}\varphi^{j}_{c,i,l,n}(x)dx$ can be pre-computed and stored.

\subsection{Some special cases for interpolation of continuous functions}\label{sec:cont}

We have discussed interpolation for functions in piecewise polynomial space $C^M(\Omega_N).$ For $f(x)$ with continuity on the whole domain, some simplification can be made, which results in fewer DoFs of $W_n^K.$ We will give a few such examples.

If $f(x) \in C^M[0,1]$ and $X^{1}_{0}=\{ 0^+, 1^-\},$ then we can merge the basis and reduce the DoFs from $2(M+1)$ to $M+1.$ For example, this is illustrated in Figure \ref{fig:cbasis} for the case when $P=1, M=0, 1.$ The case of $P=1, M=0$ actually corresponds to the most widely used hierarchical bases for piecewise linear continuous finite element.

Another example is when $f(x) \in C^0[0,1],$ $P=2, M=0$ as shown in Appendix \ref{sec:append2.2}. It is easy to check that the hierarchical coefficient for the basis $\varphi_{1,0}$ will always be zero because of continuity, so we can simply remove this basis from $W_1^K$ and hence reduce the DoFs from $3$ to $2.$

\begin{figure}[htp]
	\begin{center}
		\subfigure[$P=1, M=0$] {\includegraphics[width=.8\textwidth]{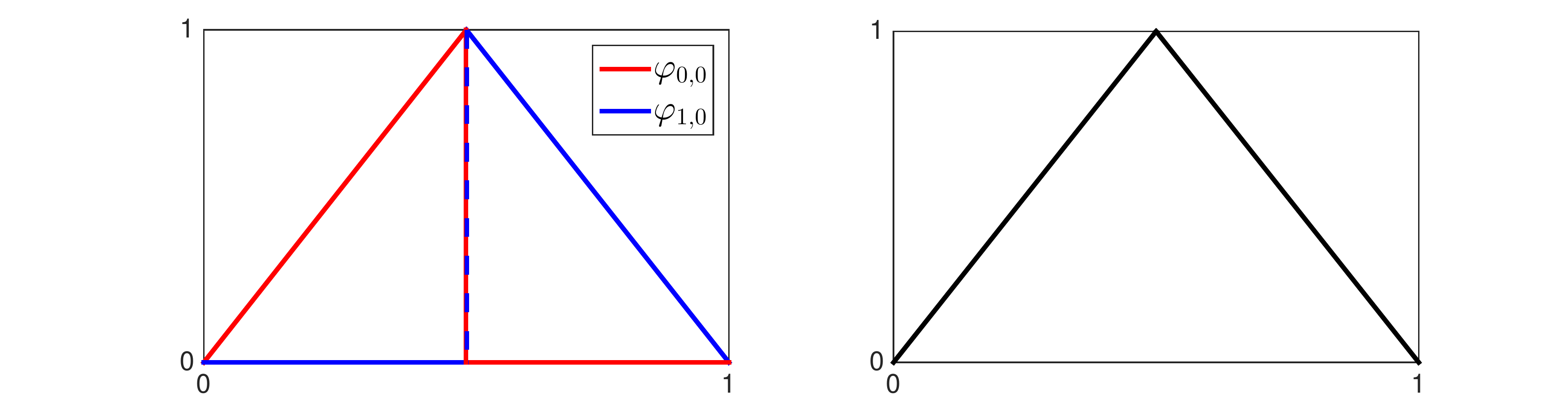}}
		\subfigure[$P=1, M=1$] {\includegraphics[width=.8\textwidth]{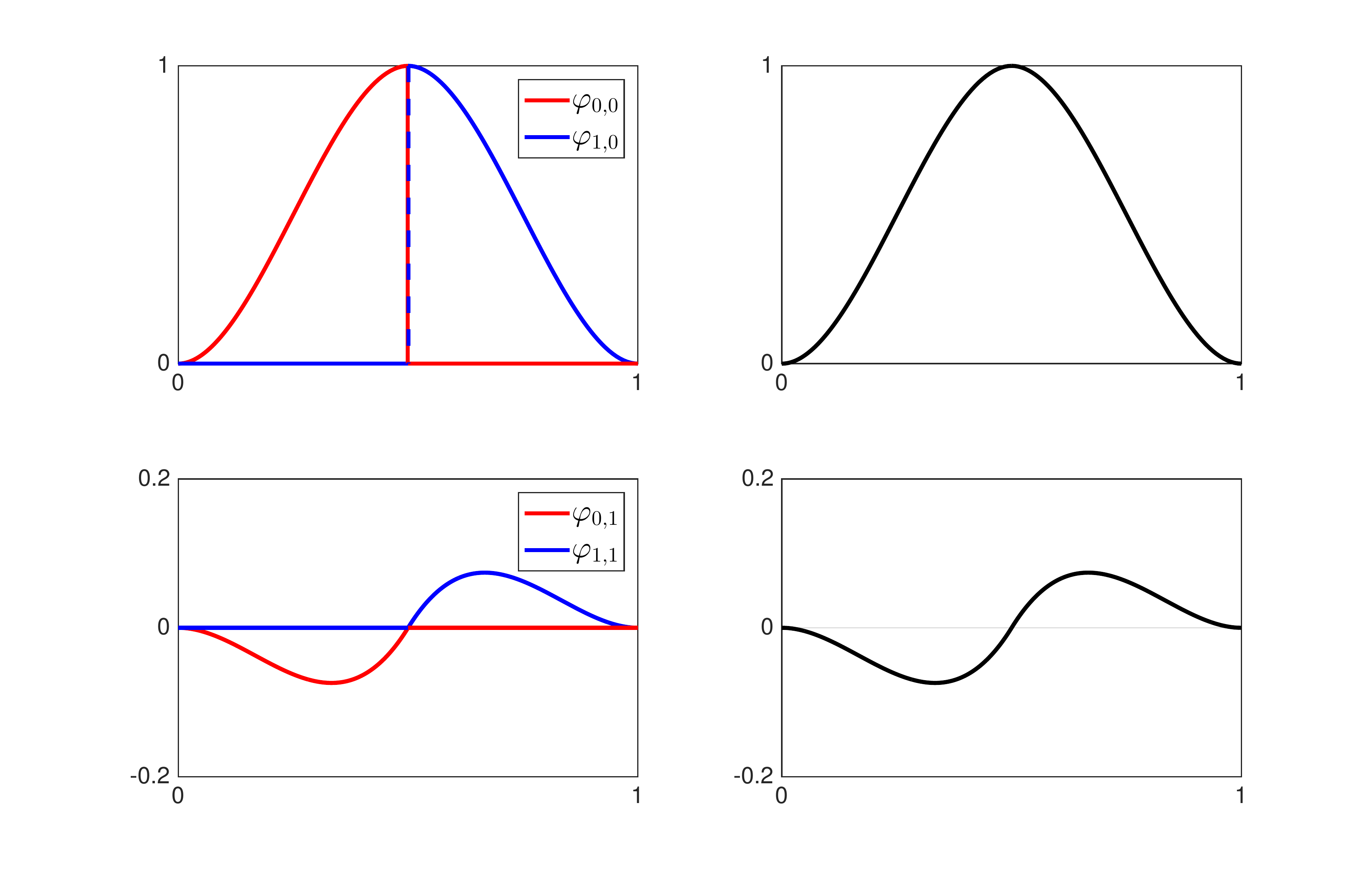}}
	\end{center}
	\caption{Illustration of merging of hierarchical basis functions. Left: discontinuous hierarchical bases in $W_1^K,$ Right: merged bases in $W_1^K$ when the function $f$ is $C^M$ continuous. The expressions of bases can be found in Appendix \ref{sec:append1.2} (Type 1) and
\ref{sec:append2.3}, resp.
	}
	\label{fig:cbasis}
\end{figure}

\section{Multidimensional case}
\label{sec:multid}


\subsection{Multiresolution interpolation method}

Now, we consider   multi-dimensional case based on the tensor product of the one-dimensional bases introduced previously.
For a $d$-dimensional problem, we consider the domain $I^d=[0,1]^d$. 

First, we recall some   notations in $\mathbb{N}^{d}$ and $\mathbb{N}^{d}_0$, where $\mathbb{N}$ ($\mathbb{N}_0$) denotes the set of (nonnegative) integers. 
For a multi-index $\mathbf{\alpha}=(\alpha_{1},\ldots, \alpha_{d})\in \mathbb{N}^{d}$,  the $l^{1}$ and $l^{\infty}$ indices are defined as
\begin{align*}
| \mathbf{\alpha} |_{1} = \sum_{m=1}^{d}\alpha_{m}, \quad
| \mathbf{\alpha} |_{\infty} = \max_{1\leq m\leq d}\alpha_{m}.
\end{align*}
The component-wise arithmetic operators and the relational operators are defined as
\begin{align*}
&\mathbf{\alpha}\pm \mathbf{\beta}=( \alpha_{1}\pm\beta_{1}, \ldots, \alpha_{d}\pm\beta_{d} ), \quad
2^{\mathbf{\alpha}}=(2^{\alpha_{1}},\ldots, 2^{\alpha_{d}}),\\
	&\mathbb{\alpha}\leq \mathbb{\beta} \Leftrightarrow \alpha_{m}\leq \beta_{m}, \, \forall m, \qquad
	\mathbb{\alpha}< \mathbb{\beta} \Leftrightarrow \mathbf{\alpha}\leq \mathbf{\beta} \, \text{and} \, \mathbf{\alpha}\neq\mathbf{\beta},\\
	&\mathbf{0}=(0,\ldots, 0), \quad \mathbf{-1}=(-1,\ldots, -1), \quad \mathbf{P}=(P,\ldots, P), \quad \mathbf{M}=(M,\ldots, M).
\end{align*}

In multi-dimensional sense, we define the tensor-product mesh grid $\Omega_{\mathbf{n}}=\Omega_{n_{1}} \otimes\cdots \otimes \Omega_{n_{d}}$ and mesh size $h_{\mathbf{n}}=(h_{n_1}, \ldots, h_{n_d})$ with mesh level $\mathbf{n}=(n_{1}, \ldots, n_{d})\in \mathbb{N}^{d}_{0}$. 
Then the tensor product piecewise polynomial space can be obtained as
\begin{align}
	\mathbf{V}^{K}_{\mathbf{n}} =\{ v: v\in P^{K}(I^{\mathbf{j}}_{\mathbf{n}}), \mathbf{0}\leq \mathbf{j} \leq 2^{\mathbf{n}} - \mathbf{1} \} = V^{K}_{n_1,x_1} \times \cdots \times V^{K}_{n_{d},x_d},
\end{align}
where, $I^{\mathbf{j}}_{\mathbf{n}}=\{\mathbf{x}: \mathbf{x}=(x_1,\ldots,x_d)\in\mathbb{R}^d, x_{m}\in I^{j_m}_{n_m}, m=1,\ldots,d\}$ denotes the elementary cell on $\Omega_{\mathbf{n}}$, $P^{K}(I^{\mathbf{j}}_{\mathbf{n}})$ denotes the collection of the polynomial of degree up to $K$ in each dimension on cell $I^{\mathbf{j}}_{\mathbf{n}}$ and $V^{K}_{n_m,x_m}$ corresponds to the space $V^{K}_{n_m}$ defined in the $m$-th dimension. If we use equal mesh refinement of size $h_N=2^{-N}$ in each coordinate direction, the grid and space will be denoted by $\Omega_{N}$ and $\mathbf{V}^{K}_{N}$, respectively. 

Basis functions of $\mathbf{V}^{K}_{\mathbf{n}}$ can be obtained by tensor product,
\begin{align*}
	 \phi^{\mathbf{j}}_{\mathbf{i},\mathbf{l}, \mathbf{n}}(\mathbf{x}) = \prod_{m=1}^{d}\phi^{j_m}_{i_m, l_m,n_m}(x_m), \quad  j_m=0, \ldots, 2^{n_m}-1, \, i_m=0,\ldots,P, \, l_m=0,\ldots, M, 
\end{align*} 
which are the Lagrange interpolation ($M=0$) or Hermite interpolation ($M\geq1$) polynomials corresponding to the point
	$$\mathbf{x}^{\mathbf{j}}_{\mathbf{i},\mathbf{n}} = \left( x^{j_1}_{i_1,n_1}, \ldots, x^{j_d}_{i_d,n_d} \right) \in \mathbf{X}^{P}_{\mathbf{n}} = X^{P}_{n_1} \times \cdots  \times X^{P}_{n_d},$$
and $K=(P+1)(M+1)-1$. We want to remark that the interpolation can use different $P$ and $M$ in each direction. However, this aspect is not explored in this paper.

Similar to the 1D case, the space $\mathbf{V}^{K}_{\mathbf{n}}$ are be represented hierarchically. 
\begin{align}
	\mathbf{V}^{K}_{\mathbf{n}} = \bigoplus_{ 0 \leq n'_1 \leq n_1,\ldots, 0 \leq n'_d \leq  n_d} \mathbf{W}^{K}_{\mathbf{n'}},  \quad \text{and} \quad 
	\mathbf{V}^{K}_{N} = \bigoplus_{|\mathbf{n'}|_{\infty}\leq N,  \, \mathbf{n'}\in \mathbb{N}_{0}^{d}} \mathbf{W}^{K}_{\mathbf{n'}}, 
\end{align}
with 
\begin{align}
	\mathbf{W}^{K}_{\mathbf{n}} = W^{K}_{n_1,x_1}\times \cdots \times W^{K}_{n_d,x_d}.
\end{align} 
For the multi-dimensional increment space $\mathbf{W}^{K}_{\mathbf{n}}$, basis functions are defined by tensor products as well,
\begin{align}
	\varphi^{\mathbf{j}}_{\mathbf{i},\mathbf{l},\mathbf{n}} (\mathbf{x}) = \prod_{m=1}^{d} \varphi^{j_m}_{i_m,l_m,n_m}(x_m).
\end{align} 

Then, we introduce the interpolation operator in multi-dimension $\mathcal{I}^{P,M}_{\mathbf{n}}: C^{M}(\Omega_{N}) \rightarrow \mathbf{V}^{K}_{\mathbf{n}}, \bn\in\mathbb{N}_{0}^d,$ 
\begin{align}
\label{eq:multiD_interpolation}
	\mathcal{I}^{P,M}_{\mathbf{n}}[f](\mathbf{x}) 
	=& \mathcal{I}^{P,M}_{n_1} \circ \cdots \circ \mathcal{I}^{P,M}_{n_d} [f](\mathbf{x}) 
	= \sum_{ \substack{ \mathbf{0}\leq \mathbf{j} \leq 2^{\mathbf{n}}-1 \\ \mathbf{0}\leq \mathbf{i}\leq \mathbf{P} \\ \mathbf{0}\leq\mathbf{l}\leq\mathbf{M} }}  f^{(\mathbf{l})}(\mathbf{x}^{\mathbf{j}}_{\mathbf{i},\mathbf{n}} ) \phi^{\mathbf{j}}_{\mathbf{i}, \mathbf{l}, \mathbf{n}} (\mathbf{x}) \notag\\
	=& \sum_{\mathbf{0} \leq \mathbf{n'}\leq \mathbf{n}} \widetilde{\mathcal{I}}^{P,M}_{n'_1} \circ \cdots \circ \widetilde{\mathcal{I}}^{P,M}_{n'_d} [f](\mathbf{x}) 
	= \sum_{ \substack{ \mathbf{0} \leq \mathbf{n'}\leq \mathbf{n}, \\ \mathbf{0}\leq \mathbf{j} \leq \max(2^{\mathbf{n'}-\mathbf{1}}-\mathbf{1},\mathbf{0}) \\ \mathbf{0}\leq\mathbf{i}\leq\mathbf{P}, \, \mathbf{0}\leq\mathbf{l}\leq\mathbf{M} }} b^{\mathbf{j}}_{\mathbf{i}, \mathbf{l}, \mathbf{n'}} \varphi^{\mathbf{j}}_{\mathbf{i}, \mathbf{l}, \mathbf{n'}} (\mathbf{x}),
\end{align}
with the set of interpolation points as $\mathbf{X}^{P}_{\mathbf{n}}$. Here, $f^{(\mathbf{l})}$ denotes the mixed derivative $\partial_{x_1}^{l_1}\cdots\partial_{x_d}^{l_d}f$. 
Consequently, for each point $\widetilde{\mathbf{x}}^{\mathbf{j}}_{\mathbf{i}, \mathbf{n}} \in \widetilde{\mathbf{X}}^{P}_{\mathbf{n}} = \widetilde{X}^{P}_{n_1} \times \cdots \times \widetilde{X}^{P}_{n_d} \subset \mathbf{X}^{P}_\mathbf{n}$, we have
\begin{align}
\label{eq:interpolate_multiD}
	f^{(\mathbf{l})}(\widetilde{\mathbf{x}}^{\mathbf{j}}_{\mathbf{i}, \mathbf{n}} )  
	=&  \partial_{\mathbf{x}}^{\mathbf{l}} \, \mathcal{I}^{P,M}_{\mathbf{n}} [f] (\widetilde{\mathbf{x}}^{\mathbf{j}}_{\mathbf{i}, \mathbf{n}}) \notag \\
	=& \sum_{ \substack{ 0\leq n'_d \leq n_d , \\ 0\leq j'_d \leq \max(2^{n'_d-1}-1,0) \\ 0\leq i'_d\leq P \\ 0\leq l'_d\leq M}} \partial_{x_d}^{l_d}\varphi^{j'_d}_{i'_d, l'_d, n'_d} (\tilde{x}^{j_d}_{i_d,n_d})
	\left( \cdots \left( \sum_{ \substack{ 0 \leq n'_1 \leq n_1 , \\ 0\leq j'_1 \leq \max(2^{n'_1-1}-1,0) \\ 0\leq i'_1\leq P \\ 0\leq l'_1\leq M}} \partial_{x_1}^{l_1}\varphi^{j'_1}_{i'_1, l'_1, n'_1}(\tilde{x}^{j_1}_{i_1,n_1}) \cdot b^{\mathbf{j'}}_{\mathbf{i'}, \mathbf{l'}, \mathbf{n'}} \right) 
	\right).
\end{align}

\noindent
Note that in each direction, the formula is the same as one-dimensional case. Thus, the multi-dimension problem can be splitted into $d$ one-dimensional problems. For example, when $d=2$, \eqref{eq:interpolate_multiD} can be computed by
\begin{align*}
	& f^{(\mathbf{l})} (\widetilde{\mathbf{x}}^{\mathbf{j}}_{\mathbf{i}, \mathbf{n}} )  
	=  \sum_{ \substack{ 0 \leq n'_2 \leq n_2 , \\ 0\leq j'_2 \leq \max(2^{n'_2-1}-1,0) \\ 0\leq i'_2\leq P, \,\, 0\leq l'_2\leq M}} \partial_{x_2}^{l_2}\varphi^{j'_2}_{i'_2, l'_2, n'_2} (\tilde{x}^{j_2}_{i_2,n_2}) \cdot \tilde{b}^{(j_1,j'_2)}_{(i_1,i'_2), (l_1,l'_2), (n_1, n'_2)}, \\ 
	& \tilde{b}^{(j_1,j'_2)}_{(i_1,i'_2),(l_1,l'_2),(n_1,n'_2)} = \sum_{ \substack{ 0\leq n'_1\leq n_1 \\ 0 \leq j'_1 \leq \max(2^{n'_1-1}-1,0) \\ 0 \leq i'_1\leq P, \,\,  0\leq l'_1 \leq M }} \partial_{x_1}^{l_1}\varphi^{j'_1}_{i'_1, l'_1, n'_1} (\tilde{x}^{j_1}_{i_1,n_1}) \cdot b^{(j'_1,j'_2)}_{ (i'_1,i'_2), (l'_1, l'_2), (n'_1, n'_2) }. 
\end{align*}
In a compact notation in arbitrary $d$ dimensions, this means
\begin{align}
\label{eq:forward_2D}
	f^{(\mathbf{l})} (\widetilde{\mathbf{x}}^{\mathbf{j}}_{\mathbf{i}, \mathbf{n}}) =  \mathcal{F}_{x_d} \circ \cdots \circ \mathcal{F}_{x_1} [b], 
\end{align}
where $\mathcal{F}_{x_m}$ denotes  the operator \eqref{eq:1d_pointvalues} working on $x_m$-direction.   Conversely,
\begin{align}
\label{eq:backward_2D}
	b^{\mathbf{j}}_{\mathbf{i}, \mathbf{l}, \mathbf{n}}  = \mathcal{F}^{-1}_{x_1} \circ \cdots \circ \mathcal{F}^{-1}_{x_d}[f].
\end{align}
 Here, the directions $x_1, \cdots, x_d$ in \eqref{eq:forward_2D} and \eqref{eq:backward_2D} can be reordered arbitrarily.
The computational complexity of \eqref{eq:forward_2D} and \eqref{eq:backward_2D} 
scales as $\mathcal{O}(d2^{Nd}(K+1)^{d+1}).$

Now we consider the multiresolution interpolation starting from level ``-1",  that is 
\begin{align}
\label{eq:multiD_interpolation2}
	\mathcal{I}^{P,M}_{\mathbf{n}}[f](\mathbf{x}) 
	= \sum_{ \substack{ \mathbf{-1} \leq \mathbf{n'}\leq \mathbf{n}, \, \mathbf{0}\leq \mathbf{j} \leq \mathbf{J_{n'}} \\ \mathbf{0}\leq\mathbf{i}\leq\mathbf{P_{n'}}, \, \mathbf{0}\leq\mathbf{l}\leq\mathbf{M_{n'}} }} b^{\mathbf{j}}_{c,\mathbf{i}, \mathbf{l}, \mathbf{n'}} \, \varphi^{\mathbf{j}}_{c,\mathbf{i}, \mathbf{l}, \mathbf{n'}} (\mathbf{x}),
\end{align}
with 
$$\mathbf{J_n}=(J_{n_1},\cdots,J_{n_d}), \quad \mathbf{M_n}=(M_{n_1},\cdots,M_{n_d}),\quad \mathbf{P_n}=(P_{n_1},\cdots,P_{n_d}),$$ 
and 
\begin{align*}
	J_n =  \left\{\begin{array}{ll} 0, & n=-1, 0, \\ 2^{n-1}-1, & n\geq1, \\ \end{array}\right.  \quad 
	M_n = \left\{\begin{array}{ll} 0, & n=-1, \\ M, & n\geq0, \\ \end{array}\right.  \quad 
	P_n =  \left\{\begin{array}{ll} 0, & n=-1, \\ P, & n\geq0. \\ \end{array}\right. 
\end{align*}
Then, similarly
\begin{align}
\label{eq:forward_2D2}
	f^{(\mathbf{l})} (\widetilde{\mathbf{x}}^{\mathbf{j}}_{\mathbf{i}, \mathbf{n}}) =  \mathcal{F}_{c,x_d} \circ \cdots \circ \mathcal{F}_{c,x_1} [b_{c}] , \quad 
	b^{\mathbf{j}}_{c,\mathbf{i}, \mathbf{l}, \mathbf{n}} = \mathcal{F}^{-1}_{c,x_1}  \circ \cdots \circ \mathcal{F}^{-1}_{c,x_d} [f],
\end{align}
where $\mathcal{F}_c$ and $\mathcal{F}_c^{-1}$ are given in \eqref{eq:forward_1D2} and \eqref{eq:backward_1D2}. The computational complexity of $\mathcal{I}^{P,M}_{N}$ still scales as $\mathcal{O}(d2^{Nd}(K+1)^{d+1}).$

\subsection{Multiresolution interpolation method on sparse grid}

In this subsection, we introduce  sparse grid interpolation method. Again, we begin the discussion with all levels starting from 0. 
We define the sparse grid space  as 
\begin{align}
	\widehat{\mathbf{V}}^{K}_{N} = \bigoplus_{|\mathbf{n}|_{1}\leq N,\, \mathbf{n}\in \mathbb{N}^{d}_{0}} \mathbf{W}^{K}_{\mathbf{n}}.
\end{align}
This is a subspace of $\mathbf{V}^{K}_{N} = \bigoplus_{|\mathbf{n}|_{\infty}\leq N, \,\mathbf{n}\in \mathbb{N}^{d}_{0}} \mathbf{W}^{K}_{\mathbf{n}}$. Following Lemma 2.3 in \cite{sparsedgelliptic}, we can prove that the dimension of $\widehat{\mathbf{V}}^{K}_{N}$ is given by
\begin{align*}
dim \left( \widehat{\mathbf{V}}^{K}_{N} \right) =& (K+1)^d \left\{ \sum_{m=0}^{d-1} \begin{pmatrix} d \\ m\\ \end{pmatrix}  \left( (-1)^{d+m} + 2^{N+m-d+1} \sum_{n=0}^{d-m-1} \begin{pmatrix} N \\ n\\	\end{pmatrix} \cdot (-2)^{d-m-1-n} \right) +1 \right\} \\
=&  \mathcal{O}\left( (K+1)^d 2^N N^{d-1} \right),
\end{align*} 
which is significantly less than that of $\mathbf{V}^{K}_{N}$ with $\mathcal{O}((K+1)^d 2^{Nd})$ when $d$ is large. 


We can now introduce the interpolation operator $\widehat{\mathcal{I}}^{P,M}_{N}: C^{M}(\Omega_{N})\rightarrow \widehat{\mathbf{V}}^{K}_{N}$
\begin{align}
\label{eq:multiD_sparse}
\widehat{\mathcal{I}}^{P,M}_{N}[f](\mathbf{x}) 
= \sum_{|\mathbf{n}|_1\leq N} \widetilde{\mathcal{I}}^{P,M}_{n_1} \circ \cdots \circ \widetilde{\mathcal{I}}^{P,M}_{n_d} [f](\mathbf{x}) 
= \sum_{ \substack{ |\mathbf{n}|_{1}\leq N, \\ \mathbf{0}\leq \mathbf{j} \leq \max(2^{\mathbf{n}-\mathbf{1}}-\mathbf{1},\mathbf{0}) \\ \mathbf{0}\leq \mathbf{i}\leq \mathbf{P} \\ \mathbf{0}\leq \mathbf{l}\leq \mathbf{M} }} b^{\mathbf{j}}_{\mathbf{i}, \mathbf{l}, \mathbf{n}} \varphi^{\mathbf{j}}_{\mathbf{i}, \mathbf{l}, \mathbf{n}} (\mathbf{x}) 
\end{align}
with the set of interpolation points as
\begin{align}
\widehat{\mathbf{X}}^{P}_{N} =& \{ \widetilde{\mathbf{x}}^{\mathbf{j}}_{\mathbf{i},\mathbf{n}} : |\mathbf{n}|_{1}\leq N, \mathbf{0}\leq \mathbf{j} \leq \max(2^{\mathbf{n}-\mathbf{1}}-\mathbf{1},\mathbf{0}), \mathbf{0}\leq \mathbf{i}\leq \mathbf{P} \} \notag \\
=& \bigcup_{|\mathbf{n}|_{1}\leq N} \widetilde{X}^{P}_{n_1}\times \cdots \times \widetilde{X}^{P}_{n_d},
\end{align}
and $b^{\mathbf{j}}_{\mathbf{i}, \mathbf{l}, \mathbf{n}}$ are the corresponding hierarchical coefficients.

To transform between the function (and derivative) values at collocation points and the hierarchical coefficients, we will perform the fast methods similar to \cite{shen2010efficient}.
In particular, for 2D cases, \eqref{eq:multiD_sparse} implies that for any $\widetilde{\mathbf{x}}^{\mathbf{j}}_{\mathbf{i}, \mathbf{n}} \in \widehat{\mathbf{X}}^{P}_{N}$ with $n_1+n_2\leq N$,
\begin{align*}
\label{eq:coeff_multiD}
f^{(\mathbf{l})}(\widetilde{\mathbf{x}}^{\mathbf{j}}_{\mathbf{i}, \mathbf{n}} )  
=& \partial_{\mathbf{x}}^{\mathbf{l}} \widehat{\mathcal{I}}^{P,M}_{N}[f] (\widetilde{\mathbf{x}}^{\mathbf{j}}_{\mathbf{i}, \mathbf{n}}) \notag\\
=& \sum_{ \substack{ 0\leq n'_2 \leq N , \\ 0\leq j'_2 \leq \max(2^{n'_2-1}-1,0) \\ 0\leq i'_2\leq P \\ 0\leq l'_2\leq M}} \partial_{x_2}^{l_2}\varphi^{j'_2}_{i'_2, l'_2, n'_2} (\tilde{x}^{j_2}_{i_2,n_2})
\left( \sum_{ \substack{ 0 \leq n'_1 \leq N-n'_2 , \\ 0\leq j'_1 \leq \max(2^{n'_1-1}-1,0) \\ 0\leq i'_1\leq P \\ 0\leq l'_1\leq M}} \partial_{x_1}^{l_1}\varphi^{j'_1}_{i'_1, l'_1, n'_1}(\tilde{x}^{j_1}_{i_1,n_1}) \cdot b^{\mathbf{j'}}_{\mathbf{i'}, \mathbf{l'}, \mathbf{n'}} \right) \\
=& \sum_{ \substack{ 0\leq n'_2 \leq n_2 , \\ 0\leq j'_2 \leq \max(2^{n'_2-1}-1,0) \\ 0\leq i'_2\leq P \\ 0\leq l'_2\leq M}} \partial_{x_2}^{l_2}\varphi^{j'_2}_{i'_2, l'_2, n'_2} (\tilde{x}^{j_2}_{i_2,n_2})
\left( \sum_{ \substack{ 0 \leq n'_1 \leq n_1, \\ 0\leq j'_1 \leq \max(2^{n'_1-1}-1,0) \\ 0\leq i'_1\leq P \\ 0\leq l'_1\leq M}} \partial_{x_1}^{l_1}\varphi^{j'_1}_{i'_1, l'_1, n'_1}(\tilde{x}^{j_1}_{i_1,n_1}) \cdot b^{\mathbf{j'}}_{\mathbf{i'}, \mathbf{l'}, \mathbf{n'}} \right),
\end{align*}
where we have used the property of the hierarchical bases in the second equality. 
This formulation is now the  same as \eqref{eq:interpolate_multiD}.
Hence, the algorithm \eqref{eq:forward_2D} can be applied for sparse approximation space as well, and we can split the problem to two 1D problems.
\begin{align*}
	\widehat{X}^{P}_{N} 
	= \bigcup_{0\leq n_1\leq  N} \widetilde{X}^{P}_{n_1}\times X^{P}_{N-n_1} 
	= \bigcup_{0\leq n_2\leq  N} X^{P}_{N-n_2}\times \widetilde{X}^{P}_{n_2},
\end{align*}
Then, the total computational complexity of the transforms for a $d$-dimension problem would be $\mathcal{O}(dN_d(K+1))$ with $N_d$ is the DoF in $\widehat{\mathbf{V}}^K_N$. 
The exact procedures are described in Algorithms \ref{algorithm1} and \ref{algorithm2} below.


\begin{algorithm}
	\caption{Fast transform from hierarchical coefficients $ b^{\bj}_{\bi,\bl,\bn}$ to point (and derivative) values $ f^{(\bl)}(\widetilde{\bx}^{\bj}_{\bi,\bn})$ on sparse approximation space in d dimensions} 
	\label{algorithm1}
	\begin{algorithmic}
		\STATE \textbf{Input:} $N$, $d$, $P, M, \widehat{\mathbf{X}}^P_N$ and coefficients $\{ b^{\bj}_{\bi,\bl,\bn}\}$ 
		
		\STATE \textbf{Output:}	the point values $\{ f^{(\bl)}(\widetilde{\mathbf{x}}^{\bj}_{\bi, \bl}) \}$
		
		\STATE
		
		\STATE set $\tilde{b}^{\bj}_{\bi, \bl, \bn}=b^{\bj}_{\bi, \bl, \bn}$
		
		\FOR  {$d'=1$ to $d$} 
		
		 \FOR {all $\bn'\in \{ \bn'=(n_1, \ldots, n_{d'-1}, n_{d'+1},\ldots, n_{d}), |\mathbf{n}'|_1\leq N \}$}
		
		\STATE one-dimensional transforms $\tilde{f}^{(\bl)}(\widetilde{\mathbf{x}}^{\bj}_{\bi, \bn}) =\mathcal{F}_{x_d}[\tilde{b}]$ on 
		$\{ \widetilde{\bx}^{\bj}_{\bi, \bn} \in \widetilde{X}^{P}_{n_1}\times \ldots \times \widetilde{X}^{P}_{n_{d'-1}}\times X^P_{N-|\bn'|_{1}} \times \widetilde{X}^{P}_{n_{d'+1}} \times \ldots \times \widetilde{X}^{P}_{n_d} \}$  
		along the $d'$-direction
		
		\ENDFOR
		
		\STATE Set $ \tilde{b}^{\bj}_{\bi, \bl, \bn} = \tilde{f}^{(\bl)}(\widetilde{\mathbf{x}}^{\bj}_{\bi, \bn}) $
		
		\ENDFOR
		
		\STATE set $f^{(\bl)} (\widetilde{\mathbf{x}}^{\bj}_{\bi, \bn}) =\tilde{f}^{(\bl)} (\widetilde{\mathbf{x}}^{\bj}_{\bi, \bn}) $
		
	\end{algorithmic}
\end{algorithm}


\begin{algorithm}
	\caption{Fast transform from point (and derivative) values $f^{(\bl)}(\widetilde{\mathbf{x}}^{\bj}_{\bi, \bn})$ to hierarchical coefficients $ b^{\bj}_{\bi, \bl, \bn}$ on sparse approximation space in d dimensions} 
	\label{algorithm2}
	\begin{algorithmic}
		\STATE \textbf{Input:} $N$, $d, P, M$, $\widehat{\mathbf{X}}^P_N$ and the point values $\{f^{(\bl)}(\widetilde{\mathbf{x}}^{\bj}_{\bi, \bn}): \widetilde{\mathbf{x}}^{\bj}_{\bi, \bn}\in\widehat{\mathbf{X}}^P_N , \mathbf{0} \leq \bl \leq \bM \}$
		
		\STATE \textbf{Output:}	$\{ b^{\bj}_{\bi, \bl, \bn}: \bn\in\mathbb{N}^{d}_{0}, |\bn|_{1}\leq N, \mathbf{0} \leq \bj \leq \max(2^{\bn-\mathbf{1}}, \mathbf{1}), \mathbf{0}\leq \bi \leq \bP, \mathbf{0}\leq \bl\leq \bM\}$
		
		\STATE  
		\STATE set $\tilde{f}^{(\bl)} (\widetilde{\mathbf{x}}^{\bj}_{\bi, \bn}) =f^{(\bl)}(\widetilde{\mathbf{x}}^{\bj}_{\bi, \bn}) $
		
		\FOR  {$d'=1$ to $d$} 
		
		\FOR {all $\bn'\in \{ \bn'=(n_1, \ldots, n_{d'-1}, n_{d'+1},\ldots, n_{d}), |\mathbf{n}'|_1\leq N \}$}
		
		\STATE one-dimensional transforms $\tilde{b}^{\bj}_{\bi, \bl, \bn} = \mathcal{F}^{-1}_{x_d}[\tilde{f}]$ on 
		$\{ \widetilde{\bx}^{\bj}_{\bi,\bn} \in \widetilde{X}^{P}_{n_1}\times \ldots \times \widetilde{X}^{P}_{n_{d'-1}}\times X^P_{N-|\bn'|_{1}} \times \widetilde{X}^{P}_{n_{d'+1}} \times \ldots \times \widetilde{X}^{P}_{n_d}) \}$  along the $d'$-direction
		
		\ENDFOR
		
		\STATE Set $ \tilde{f}^{(\bl)} (\widetilde{\mathbf{x}}^{\bj}_{\bi, \bn}) = \tilde{b}^{\bj}_{\bi, \bl, \bn} $
		
		\ENDFOR
		
		\STATE set $b^{\bj}_{\bi, \bl, \bn} =\tilde{b}^{\bj}_{\bi, \bl, \bn}$
		
	\end{algorithmic}
\end{algorithm}

\bigskip

Next, we consider the sparse grid space starting from level ``-1", which is given as
\begin{align}
	\widehat{\mathbf{V}}^{K}_{c,N} = \bigoplus_{\substack{ \mathbf{-1}\leq \mathbf{n} \leq \mathbf{N}  \\ |\mathbf{n}|_{1}\leq N-d+1 }} \mathbf{W}^{K}_{c,\mathbf{n}},
\end{align}
with 
	\begin{align}
	\mathbf{W}^{K}_{c,\mathbf{n}} = W^{K}_{c,n_1,x_1}\times \cdots \times W^{K}_{c,n_d,x_d}.
	\end{align} 
	 Following the proof in \cite{sparsedgelliptic}, we can obtain the dimension
\begin{align}
	dim\left(\widehat{\mathbf{V}}^{K}_{c,N} \right)
	=&  1 + \sum_{q=0}^{d-1} \begin{pmatrix} d \\ q \\ \end{pmatrix} K^{d-q} + \sum_{q=0}^{d-1} \sum_{m=0}^{d-q-1}  \begin{pmatrix} d \\ q \\ \end{pmatrix} \begin{pmatrix} d-q \\ m \\ \end{pmatrix} K^m (K+1)^{d-q-m}  \notag \\
	&  \left\{ (-1)^{d-q-m} + \sum_{n=0}^{d-q-1-m} \begin{pmatrix} N-d+q+1 \\ n \\ \end{pmatrix} (-1)^{d-q-1-m-n} 2^{N-d+q+1-n} \right\}.
\end{align} 
Suppose there is an upper bound on the dimension $d\leq d_0$, then there exist constants $c_{d_0}$ and $C_{d_0}$ depending only on $d_0$, such that
\begin{align}
	c_{d_0} \, 2^N (N-d+1)^{d-1} (K+1)^d
	\leq dim\left(\widehat{\mathbf{V}}^{K}_{c,N} \right) 
	\leq C_{d_0} \, 2^N N^{d-1} (K+1)^d.
\end{align} 
Then, we can construct the interpolation operator $\widehat{I}^{P,M}_{c,N}: C^{M}(\Omega_{N})\rightarrowtail \widehat{\mathbf{V}}^{K}_{c,N}$ as
\begin{align}
\label{eq:multiD_sparse2}
	\widehat{\mathcal{I}}^{P,M}_{c,N}[f](\mathbf{x}) 
	= \sum_{\substack{ \mathbf{-1}\leq \bn \leq \mathbf{N}  \\ |\bn|_{1}\leq N-d+1 }} \widetilde{\mathcal{I}}^{P,M}_{c,n_1} \circ \cdots \circ \widetilde{\mathcal{I}}^{P,M}_{c,n_d} [f](\mathbf{x}) 
	= \sum_{ \substack{  \mathbf{-1}\leq \bn \leq \mathbf{N},  \, |\bn|_{1}\leq N-d+1 , \\ \mathbf{0}\leq \bj \leq \bJ_\bn \\ \mathbf{0}\leq \bi \leq \bP_\bn \\ \mathbf{0}\leq \bl \leq \bM_\bn }} b^{\bj}_{c,\bi, \bl, \bn} \varphi^{\bj}_{c,\bi, \bl, \bn} (\mathbf{x}) 
\end{align}
with the set of interpolation points as
\begin{align}
	\widehat{\mathbf{X}}^{P}_{c,N} 
	=& \bigcup_{\mathbf{-1}\leq \bn \leq \mathbf{N},  \, |\mathbf{n}|_{1}\leq N-1} \widetilde{X}^{P}_{c,n_1}\times \cdots \times \widetilde{X}^{P}_{c,n_d}.
\end{align}
Similarly, we can   use the fast algorithm 1 and 2 by replacing  $\mathcal{F}_{x_d} / \mathcal{F}_{x_d}^{-1}$ with $\mathcal{F}_{c,x_d} / \mathcal{F}_{c,x_d}^{-1}$, and the space to $\{ \widetilde{\bx}^{\bj}_{c,\bi,\bn} \in \widetilde{X}^{P}_{c,n_1}\times \ldots \times \widetilde{X}^{P}_{c,n_{d'-1}}\times X^P_{c,N-d+1-|\bn'|_{1}} \times \widetilde{X}^{P}_{c,n_{d'+1}} \times \ldots \times \widetilde{X}^{P}_{c,n_d}) \}.$

\subsection{Error estimates}

In this subsection, we   study the approximation error of the interpolation operator on the sparse approximation space. Similar analysis has been performed in \cite{bungartz2004sparse, schwab2008sparse,guo2016sparse}, and the key is to gather one-dimensional approximation results and then work on multi-dimensional case.

We first review some approximation results in 1D. Suppose $f\in W^{r+1,p}(0,1)$. Then based on the Bramble-Hilbert lemma, there exists a constant $C_1$ which is independent of $n$, such that for any integer $1\leq q \leq \min(r,K)$  and $p\in[1,\infty]$, $s\in [0,q]$ 
\begin{align*}
	\|f-\mathcal{I}^{P,M}_{n}[f]\|_{W^{s,p}(I^{j}_{n})} \leq C_{1} \, h_{n}^{q+1-s} |f|_{W^{q+1,p}(I^{j}_{n})},  
\end{align*}
with $h_{n}=2^{-n}.$ Therefore, we can obtain the bound of the increment interpolation operator   $\widetilde{\mathcal{I}}^{P,M}_{n}, n\geq1$, 
\begin{align}
\label{eq:error_interpolation_1D1}
	\|\widetilde{\mathcal{I}}^{P,M}_{n}[f]\|_{W^{s,p}(I^{j}_{n})} 
	&\leq \|f-\mathcal{I}^{P,M}_{n-1}[f]\|_{W^{s,p}(I^{j}_{n})} + \|f-\mathcal{I}^{P,M}_{n}[f]\|_{W^{s,p}(I^{j}_{n})} \notag\\
	&\leq
	C_1 \left(1+2^{q+1-s}\right)\,h_{n}^{q+1-s}  \, |f|_{W^{q+1,p}(I^{\lfloor j/2\rfloor}_{n-1})}
\end{align} 
When $n=0$, using the definition of $\widetilde{\mathcal{I}}^{P,M}_{0}$, we can obtain that
\begin{align}
\label{eq:error_interpolation_1D2}
	\|\widetilde{\mathcal{I}}^{P,M}_{n}[f]\|_{W^{s,p}(0,1)} \leq C_{2} \|f\|_{W^{M,\infty}(0,1)},
\end{align}
where the constant $C_2$ that depends on $K$. We denote $\bar{C}=\max(C_1,C_2),$
and similarly
\begin{align}
\label{eq:error_interpolation_c_1D}
	\| \widehat{\mathcal{I}}^{P,M}_{c,n}[f] \|_{W^{s,p}(I^{j}_{n})} \leq 
	\left\{ \begin{array}{ll}
	\bar{C} \|f\|_{W^{0,\infty}(0,1)} \leq \bar{C} \|f\|_{W^{M,\infty}(0,1)}, & n=-1,\\
	\bar{C} \|f\|_{W^{M,\infty}(0,1)}, & n=0, \\
	\bar{C} \left(1+2^{q+1-s}\right)\,h_{n}^{q+1-s}  \, |f|_{W^{q+1,p}(I^{\lfloor j/2\rfloor}_{n-1})}, & n\geq1.\\
	\end{array}\right.
\end{align}

Next, we introduce some notations for error estimates in multi-dimensions. Consider a set $L=\{ d_1, \cdots, d_r\} \subset \{1,\ldots,d\}$ with $|L|=r$, we define $L^c$ to be the complement set of $L$ in $\{1, \ldots, d\}$, denoted as $L^c= \{d'_1,\ldots, d'_{d-r}\}$. Then, we define a semi-norm on domain $\Omega=\Omega_{x_1}\times\cdots \times\Omega_{x_d}$ as
\begin{align*}
|f|_{W^{q+1,M,p,L}(\Omega)}= \max_{\substack{0\leq m_{s} \leq M \\ x_{d'_s} \in \Omega_{d_s'} \\ 1\leq s \leq d-r}} \left\{ \int_{\Omega_{d_r}} \cdots \int_{\Omega_{d_1}} \left| \partial^{m_{1}}_{x_{d'_1}}\cdots \partial^{m_{d-r}}_{x_{d'_{d-r}}} \partial^{q+1}_{x_{d_1}} \cdots \partial^{q+1}_{x_{d_r}} f \right|^p dx_{d_1} \cdots d x_{d_r}\right\}^{1/p} ,
\end{align*}
and 
\begin{align*}
|f|_{W^{q+1,M,p}(\Omega)} = \max_{1\leq r\leq d} \left( \max_{\substack{L\subset\{1,\cdots,d\} \\ |L|=r}} |f|_{W^{q+1,M,p,L}(\Omega)} \right).
\end{align*}

 	
Then, we have the following theorem, quantifying the interpolation error in multi-dimensions. Many discussions and notations below are similar to Lemma 3.2 in \cite{guo2016sparse}.

\begin{thm}
\label{thm:1}
	Suppose $f\in W^{r+1,p}(\Omega),$ for any integer $1\leq q \leq \min(r,K)$  and $p\in[1,\infty]$,  we have 
	\begin{align}
	\label{eqn:relation}
	& \|f - \widehat{\cI}^{P,M}_{N}[f]\|_{L^{p}(\Omega_N)} \leq    \left\{ \tilde{C} + \bar{C}^d (N+1)^d \, \left(2+2^{q+2}\right)^{d} 2^{-(q+1)} \right\} \,  2^{-N (q+1)} |f|_{W^{q+1,M,p}(\Omega)} ,\\
	\label{eqn:relation3}
	& |f - \widehat{\cI}^{P,M}_{N}[f]|_{H^{1}(\Omega_N)} \leq   \left\{ \tilde{C} + \bar{C}^d  d^{3/2}\, \left(2+2^{q+2}\right)^{d} 2^{d-1} \right\} \,  2^{-N q} |f|_{W^{q+1,M,2}(\Omega)} 
	\end{align}
	and
	\begin{align}
	\label{eqn:relation2}
	 \|f - \widehat{\cI}^{P,M}_{c,N}[f]\|_{L^{p}(\Omega_N)}
	&\leq   \left\{\tilde{C} + \bar{C}^d \left( N+1+2^{-(q+1)} \right)^d (2+2^{q+2})^{d} \, 2^{(d-2)(q+1)} \right\}\,  2^{-N(q+1)}   |f|_{W^{q+1,M,p}(\Omega)} , \\
	\label{eqn:relation4}
	 |f - \widehat{\cI}^{P,M}_{c,N}[f] |_{H^{1}(\Omega_N)}
	&\leq   \left\{\tilde{C} + d^{3/2}  \, \bar{C}^{d}  \left( 1+2^{-(q+1)}\right)^d  \,  (2+2^{q+2})^{d} \, 2^{(d-1)(q+1)}   \right\}\,  2^{-qN}  |f|_{W^{q+1,M,2}(\Omega)} 
	\end{align}
	when $N\geq1, d\geq2$, where $\tilde{C}$ is a constant that depends on $p, q, s$, but not on $N$. 
\end{thm}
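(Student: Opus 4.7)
The plan is to exploit the tensor product structure of the increment operators $\widetilde{\mathcal{I}}^{P,M}_{n_1}\circ\cdots\circ\widetilde{\mathcal{I}}^{P,M}_{n_d}$ together with a telescoping decomposition. Assuming enough regularity so that $f = \sum_{\mathbf{n}\in\mathbb{N}_0^d}\widetilde{\mathcal{I}}^{P,M}_{n_1}\circ\cdots\circ\widetilde{\mathcal{I}}^{P,M}_{n_d}[f]$ in the appropriate sense, the sparse grid error splits as
\begin{equation*}
f - \widehat{\mathcal{I}}^{P,M}_{N}[f] \;=\; \sum_{|\mathbf{n}|_1 > N}\widetilde{\mathcal{I}}^{P,M}_{n_1}\circ\cdots\circ\widetilde{\mathcal{I}}^{P,M}_{n_d}[f].
\end{equation*}
A standard tail-truncation argument, which also accounts for the remainder past level $N$ in each direction, produces the extra constant $\tilde C$ and reduces the estimate to bounding this sum.

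The key step is to estimate each tensor-product term in $L^p$ on an elementary cell. For a multi-index $\mathbf{n}$ with $|\mathbf{n}|_1>N$, let $L=L(\mathbf{n})=\{m:n_m\geq 1\}$ with $|L|=r$. In directions $m\in L$ I would apply the sharp decay bound \eqref{eq:error_interpolation_1D1} with $s=0$, giving a factor $\bar C(1+2^{q+1})\,2^{-n_m(q+1)}$ times a $(q{+}1)$-st derivative in $x_m$. In directions $m\in L^c$ the index $n_m=0$ carries no decay, so I use \eqref{eq:error_interpolation_1D2}, giving a factor $\bar C$ times an $L^\infty$-in-$x_m$ norm of up to $M$ derivatives. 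Taking these factors in the right order turns the product into exactly the mixed semi-norm $|f|_{W^{q+1,M,p,L}(\Omega)}$, and summing the $L^p$ contributions over the cells $\mathbf{j}$ yields
\begin{equation*}
\bigl\|\widetilde{\mathcal{I}}^{P,M}_{n_1}\!\circ\!\cdots\!\circ\!\widetilde{\mathcal{I}}^{P,M}_{n_d}[f]\bigr\|_{L^p(\Omega_N)}
\;\leq\; \bar C^{d}(1+2^{q+1})^{r}\, 2^{-(q+1)\sum_{m\in L}n_m}\,|f|_{W^{q+1,M,p,L}(\Omega)}.
\end{equation*}

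Next I would sum over $\mathbf{n}$ with $|\mathbf{n}|_1>N$. After grouping by $L$, the inner sum is
$\sum_{k>N}\binom{k-1}{r-1}\, 2^{-k(q+1)}$, which by a geometric tail estimate is bounded by $C_{q,r}\,(N+1)^{r-1}2^{-(N+1)(q+1)}$. Overbounding $|f|_{W^{q+1,M,p,L}}$ by $|f|_{W^{q+1,M,p}}$, replacing $r$ by $d$ in the prefactors, and combining with the truncation constant $\tilde C$ delivers \eqref{eqn:relation}. The estimate \eqref{eqn:relation3} in $H^1$ is obtained by the same argument applied componentwise: for each direction $m$, I would use $s=1$ in that one direction (improving the 1D decay to $2^{-n_m q}$ when $n_m\geq 1$) and $s=0$ in the remaining $L$-directions, then sum in $m=1,\ldots,d$ and take a square root, which produces the factor $d^{3/2}$.

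Finally, for the corrected operators \eqref{eqn:relation2} and \eqref{eqn:relation4} I would repeat the same scheme using \eqref{eq:error_interpolation_c_1D}: the index range is now $-1\leq n_m\leq N$ with $|\mathbf{n}|_1\leq N-d+1$, so the complementary cone is $|\mathbf{n}|_1>N-d+1$, producing the extra $2^{(d-2)(q+1)}$ (resp.\ $2^{(d-1)(q+1)}$) prefactor, while the $n_m=-1$ contribution is absorbed into the same mixed semi-norm just as the $n_m=0$ case. The main bookkeeping obstacle will be handling the interplay between the set $L$, the directional mixed semi-norms, and the combinatorial sum $\binom{k-1}{r-1}$ cleanly so that the final constants collapse into the closed-form expressions given in the statement; the rest is routine tensor-product Bramble--Hilbert.
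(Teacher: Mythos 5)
Your proposal follows essentially the same route as the paper's proof in Appendix C: split off the full-grid interpolation error (your tail truncation, the paper's explicit $f-\cI^{P,M}_{N}[f]$ term giving $\tilde C$), bound each tensor-product increment on the complement of the sparse index cone by the 1D estimates \eqref{eq:error_interpolation_1D1}--\eqref{eq:error_interpolation_c_1D} organized through the mixed semi-norms $|f|_{W^{q+1,M,p,L}}$, and then sum combinatorially over the cone. The only differences are bookkeeping: the paper counts the finite set $\{|\bn|_\infty\le N,\ |\bn|_1\ge N+1\}$ directly via (A.11) of \cite{guo2016sparse} rather than your $\binom{k-1}{r-1}$ geometric tails, picks up an extra factor $2^{\gamma}$ from cell overlap when passing from $I^{\bj}_{\bn}$ to $\Omega$, and obtains $d^{3/2}$ in \eqref{eqn:relation3} as $\sqrt{d}$ (from the componentwise $H^1$ sum) times $d$ (from $\sum_{|\bn|_1=s}2^{|\bn|_\infty}\le d\,2^{d-1+s}$), not from the square root alone.
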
 
\begin{proof}
The proof can be found in Appendix \ref{sec:proof}.
\end{proof}

\subsection{The adaptive sparse grid collocation scheme}

%



In this subsection, we  describe the adaptive sparse grid collocation method.
For   solutions with less smoothness, adaptive methods are necessary to capture the fine local structures. 
The main idea of the algorithm is not to use $ \widehat{\bV}_{c,N}^K$ in a pre-determined fashion, but rather to choose a subspace of $ \bV_N^K$ adaptively. 
The setup of the algorithm is very similar to \cite{guo2017adaptive} for adaptive projection method, including the data structures.
Given a maximum mesh level $N$ and an accuracy threshold $\varepsilon>0$,  we use the adaptive multiresolution interpolation algorithm  to get the numerical solution $f_h(\bx)$ for the exact function $f(\bx)$.  The details of the method are listed below in Algorithm \ref{algorithm3}.

\begin{algorithm}[htb]
	\caption{ Adaptive multiresolution interpolation} \label{algorithm3}
	\begin{algorithmic}
		\STATE {\bf Input:} Function $f(\bx)$.

		\STATE {\bf Parameters:} Maximum level $N,$ polynomial degree $K,$  error threshold   $\varepsilon.$

		\STATE {\bf Output:} Hash table $H$, leaf table $L$ and interpolating  solution $f_h(\bx) \in \bV_{N,H}^K.$

\STATE
\begin{enumerate}
	\item Interpolate $f(\bx)$ onto the coarsest level of mesh, e.g., level -1, and compute the  hierarchical coefficients (surplus) $ b^{\mathbf{0}}_{c,\mathbf{0},\mathbf{0},\textbf{-1}}$
	based on point value at $\widetilde{\bx}^{\textbf{0}}_{c,\textbf{0},\textbf{0},\textbf{-1}}$.
	Add all elements to the hash table $H$ (active list). Define an element without children as a leaf element, and add all the leaf elements to the leaf table $L$ (a smaller hash table). 
	
	\item For each leaf element $\bV_\bn^\bj=span \{\varphi^\bj_{c,\bi,\bl, \bn}, \mathbf{0}\leq\bi\leq\bP_{\bn}, \mathbf{0}\leq\bl\leq\bM_{\bn} \}$ in the leaf table, if the refinement criteria holds
	\begin{align}
	\label{eq:l1}
	& \sum_{\substack{\mathbf{0}\leq\bi\leq\bP_{\bn} \\ \mathbf{0}\leq\bl\leq\bM_{\bn}}} |b^\bj_{c,\bi,\bl,\bn}| \cdot \|\varphi^\bj_{c,\bi,\bl,\bn}(\bx)\|_{L^1(\Omega)} > \varepsilon, \\
	or \quad 	& \left(\sum_{\substack{\mathbf{0}\leq\bi\leq\bP_{\bn} \\ \mathbf{0}\leq\bl\leq\bM_{\bn}}} |b^\bj_{c,\bi,\bl,\bn}|^2 \cdot \|\varphi^\bj_{c,\bi,\bl,\bn}(\bx)\|^2_{L^2(\Omega)} \right)^{\frac{1}{2}} > \varepsilon,  \label{eq:l2} \\
	or \quad & \sum_{\substack{\mathbf{0}\leq\bi\leq\bP_{\bn} \\ \mathbf{0}\leq\bl\leq\bM_{\bn}}} |b^\bj_{c,\bi,\bl,\bn}| \cdot \|\varphi^\bj_{c,\bi,\bl,\bn}(\bx)\|_{L^{\infty}(\Omega)} > \varepsilon, \label{eq:l8}
	\end{align}
	then we consider its child  elements: for a child element $\bV_{\bn'}^{\bj'}$, if it has not been added to the table $H$, then  compute the detail coefficients $ \{ b^{\bj'}_{c,\bi,\bl,\bn'}, \mathbf{0}\leq\bi\leq\bP_{\bn}, \mathbf{0}\leq\bl\leq\bM_{\bn} \}$ with fast transform  algorithm 2 based on point values $f^{(\bl)}(\widetilde{\mathbf{x}}^{\mathbf{j'}}_{\mathbf{i},\mathbf{n'}})$  and add $\bV_{\bn'}^{\bj'}$ to both table $H$ and table $L$. The cost of adding each element is $\mathcal{O}(d(K+1)^{d+1})$.  For its parent elements in $H$, we increase the number of children by one.
	
	\item Remove the parent elements from table $L$ for all the newly added elements. 
	\item Repeat step 2 - step 3, until no element can be further added.
	\item (Optional) Coarsen the points which are insignificant. The hash table $H$ is traversed, if the coarsening criterion for each point
	\begin{align}
	&|b^\bj_{c,\bi,\bl, \bn}| \cdot \|\varphi^\bj_{c,\bi,\bl, \bn}(\bx)\|_{L^s(\Omega)}<\eta, \,\, s=1, 2, or \,  \infty
	\label{eq:l1_c_pt}
	\end{align}
	is satisfied, where $\eta$ is a prescribed error constant, then we remove the point from the element in $H$, and set the associated coefficients $b^{\bj}_{c,\bi,\bl, \bn}=0$. If this element is in the leaf table $L$, we also remove the point from the element in $L$.

\end{enumerate}
\end{algorithmic}
\end{algorithm}

In the algorithm, the formulas in \eqref{eq:l1},  \eqref{eq:l2}, and  \eqref{eq:l8} correspond to $L^1, L^2$ and $L^{\infty}$ norm based refinement criteria  in  \cite{guo2017adaptive}, respectively.
When the adaptive interpolation algorithm completes, it will generate a   hash table $H$, leaf table $L$ and   $f_h(\bx)=\sum_{\varphi^\bj_{c,\bi,\bl,\bn} \in H} b^\bj_{c,\bi,\bl, \bn} \varphi^\bj_{c,\bi,\bl, \bn}(\bx)$.  We denote the approximation space  $\bV^K_{N,H}=\textrm{span}\{\varphi^\bj_{c,\bi,\bl, \bn} \in H\}$ and it is a subspace of $\bV^K_N.$  In practice, $\eta$ is chosen to be smaller than $\varepsilon$ for safety. In the simulations presented in this paper, we use $\eta = \varepsilon/10$. 

If the solution are evolved in time, refinement step will be done according to $f_h^{n}$ at each time level $t^n$. We traverse the hash table $H$ and if an element $\bV^\bj_{\bn}=\{\varphi^\bj_{c,\bi,\bl, \bn}, \mathbf{0}\leq\bi\leq\bP_{\bn}, \mathbf{0}\leq\bl\leq\bM_{\bn} \}$ satisfies the refinement criteria \eqref{eq:l1},  \eqref{eq:l2}, or \eqref{eq:l8}, 
indicating that such an element becomes significant,
then we need to add its children elements
to $H$ and $L$  provided they are not added yet, and compute the numerical solutions of the associated points. We also need to make sure that all the parent elements of the newly added element are in $H$ (i.e., no ``hole" is allowed in the hash table) and increase the number of children for all its parent elements by one. This step  generates the updated hash table $H$ and leaf table $L$.

In this adaptive approach, algorithm 1 or 2 is also applied to the fast transform between point values and hierarchical coefficients. The  computational cost for transformation is $\mathcal{O}(dn(K+1))$, where $n$ is the total degree of freedom.  


\begin{rmk}
	Step 5  is optional and employed for function interpolations.  In particular, since step 5 will coarsen the points within some elements, this may result in some ``holes" in these elements, i.e. the space is no longer downward closed, so it should not be used with time evolution problems. Instead, an element-wise coarsening procedure as in  \cite{guo2017adaptive} should be used.
\end{rmk}

\begin{rmk}
	The reason we introduce level -1 is based on the simulation to high dimensional case. Suppose we start with level 0 with $d=10, K=3$, there will be $(K+1)^d = 1048576$  degrees of freedom in the coarsest element  at  level 0, and the number will increase dramatically along with the dimension $d$. To reduce the number of degrees of freedom, we therefore consider -1 level and we just put one degree of freedom at level -1 in each direction. Then, the degree of freedom in the coarsest element at level -1 is 1. This technique has been used before in \cite{garcke2006dimension}. 
\end{rmk}

\section{Numerical examples}
\label{sec:numerical}


In this section, we demonstrate the performance of our algorithms through several examples, including function interpolation and integration, and some benchmark test problems in uncertainty quantification (UQ). For all numerical examples, we have used the method starting from mesh level ``-1". Various types of interpolation orders have been tested, and the details of the corresponding collocation points and bases are described in the Appendix \ref{sec:append1}. In particular, for Lagrange interpolation ($M=0$), we test $P=1, 2, 3$ (see Appendix \ref{sec:append1.2} type 1, \ref{sec:append2.2} and \ref{sec:append2.4}), which implies that the polynomial order $K=1, 2, 3.$ For Hermite interpolation, we test $P=1, M=1,$ (see Appendix \ref{sec:append2.3}) which implies that the polynomial order $K=3.$

%
%
%
%
%
%
%
%
%
%
%

\subsection{Function interpolation and integration}

In this subsection, we demonstrate the performance of the (adaptive) sparse grid   method in function interpolation and integration. The error has been calculated using randomly generated  $100000$ sample points.
We will use the following five functions which have been considered in previous work \cite{jakeman2012local, ma2009adaptive, bhaduri2018stochastic}.

$$f_0({\bf x}) = \exp \left(  \sin(2 \pi (x_1+x_2))   \right), \quad  {\bf x} \in [0,1]^2,$$

$$f_1({\bf x}) = \frac{1}{|0.3-x_1^2-x_2^2|+0.1}, \quad  {\bf x} \in [0,1]^2,$$

$$f_2( {\bf x} ) = \exp \left(  -\sum_{i=1}^{d} c_i^2 (x_i - 0.51)^2 \right), \quad  {\bf x} \in [0,1]^d,$$

$$f_3( {\bf x} ) = \exp \left(  -\sum_{i=1}^{d} c_i |x_i - 0.51| \right), \quad  {\bf x} \in [0,1]^d,$$

$$f_4( {\bf x} ) = 	\left\{\begin{array}{ll}
0, &  x_1>0.5 \,\, \text{or} \,\, x_2>0.5\\
\exp \left(  \sum_{i=1}^{d} c_i x_i \right), & \text{otherwise}\\
\end{array}\right., \quad  {\bf x} \in [0,1]^d, $$
where $d$ is the dimension.
Functions $f_0( {\bf x} ) $ and $f_2( {\bf x} ) $ are smooth. Functions $f_1( {\bf x} ) $ and $f_3( {\bf x} ) $ have   jump  discontinuities in the derivatives while $f_4( {\bf x} )$ is discontinuous.

We first verify the accuracy of the sparse grid collocation method by interpolating function $f_0( {\bf x} ).$  The $L^1, L^2, L^\infty$ and $H^1$ errors and orders of various interpolations are presented in Table \ref{table:interp_2D}. The orders are calculated with respect to  $h_N$. We can see all interpolations achieve $L^1, L^2$ and $L^\infty$ accuracy order slightly less than $K+1,$ and  $H^1$ accuracy of $K-$th order  as predicted by Theorem \ref{thm:1}.

\begin{table}[htp]
	
	\caption{$L^1, L^2, L^{\infty}$ and $H^1$ errors and orders of accuracy for Lagrange and Hermite interpolation  of function $f_0({\bf x})$. 
		$N$ is the number of mesh levels,   $K$ is the polynomial order.  The orders are calculated with respect to  $h_N$. 
	}
	\centering
	\begin{tabular}{|c|c|c c|c c|c c|c c|}
		\hline
		& &       \multicolumn{6}{c|}{Lagrange}     &       \multicolumn{2}{c|}{Hermite}\\
	
		\hline
		$N$& $h_N$&   \multicolumn{2}{c|}{$ K=1$}    &     \multicolumn{2}{c|}{$ K=2$}  &     \multicolumn{2}{c|}{$ K=3$}  
		&     \multicolumn{2}{c|}{$ K=3$}\\
	   \hline
        & & $L^1$ error & order& $L^1$ error & order & $L^1$  error & order & $L^1$  error & order\\
		\hline
		6 &$1/64$  &  1.35E-01	&   --       &   7.15E-03	&	--     &	5.19E-04	&	--     & 7.54E-03	& --\\
		7 &$1/128$ &  4.74E-02	&	1.51     &	 1.22E-03	&	2.55   &	4.03E-05	&	3.69   & 7.55E-04	& 3.32\\
		8 &$1/256$ &  1.60E-02	&	1.57     &	 2.17E-04	&	2.49   &	4.74E-06	&	3.09   & 1.21E-04	& 2.64\\
		9 &$1/512$ &  5.33E-03	&	1.59     &	 3.13E-05	&	2.79   &	3.36E-07	&	3.82   & 1.01E-05	& 3.58\\
		10 &$1/1024$&  1.68E-03   &	1.67     &	 4.45E-06	&	2.81   &	2.36E-08	&	3.83   & 8.33E-07	& 3.60\\
		\hline
		& & $L^2$ error & order& $L^2$ error & order & $L^2$  error & order & $L^2$  error & order\\
		\hline
		6 &$1/64$  &  1.89E-01	&   --       &   1.15E-02	&	--     &	9.02E-04	&	--     & 1.55E-02	& --\\
		7 &$1/128$ &  6.45E-02	&	1.55     &	 1.85E-03	&	2.64   &	6.56E-05	&	3.78   & 1.16E-03	& 3.74\\
		8 &$1/256$ &  2.35E-02	&	1.46     &	 3.50E-04	&	2.40   &	8.05E-06	&	3.03   & 2.07E-04	& 2.49\\
		9 &$1/512$ &  7.76E-03	&	1.60     &	 4.98E-05	&	2.81   &	5.47E-07	&	3.88   & 1.65E-05	& 3.65\\
		10 &$1/1024$&  2.43E-03   &	1.68     &	 7.24E-06	&	2.78   &	3.96E-08	&	3.79   & 1.41E-06	& 3.55\\
		\hline
		& & $L^{\infty}$ error & order& $L^{\infty}$ error & order & $L^{\infty}$  error & order & $L^{\infty}$  error & order\\
		\hline
		6 &$1/64$  &  5.69E-01	&   --       &   4.69E-02	&	--     &	4.37E-03	&	--     & 7.42E-02	& --\\
		7 &$1/128$ &  1.98E-01	&	1.52     &	 7.58E-03	&	2.63   &	3.53E-04	&	3.63   & 4.09E-03	& 4.18\\
		8 &$1/256$ &  9.37E-02	&	1.08     &	 2.01E-03	&	1.92   &	4.61E-05	&	2.94   & 9.51E-04	& 2.10\\
		9 &$1/512$ &  2.98E-02	&	1.65     &	 3.14E-04	&	2.68   &	3.17E-06	&	3.86   & 7.34E-05	& 3.70\\
		10 &$1/1024$&  1.02E-02   &	1.55     &	 5.53E-05	&	2.51   &	2.74E-07	&	3.53   & 7.65E-06	& 3.26\\
		\hline
		& & $H^1$ error & order& $H^1$ error & order & $H^1$  error & order & $H^1$  error & order\\
		\hline
		6 &$1/64$  &  3.69E+00	&   --       &   4.91E-01	&	--     &	5.97E-02	&	--     & 3.45E-01	& --\\
		7 &$1/128$ &  1.84E+00	&	1.00     &	 1.29E-01	&	1.93   &	7.12E-03	& 3.07   & 4.05E-02	& 3.09\\
		8 &$1/256$ &  9.76E-01	&	0.91     &	 3.73E-02	&	1.79   &	1.23E-03	&	2.53   & 9.14E-03	& 2.15\\
		9 &$1/512$ &  4.99E-01	&	0.97     &	 9.35E-03	&	2.00   &	1.49E-04	&	3.05   & 1.07E-03	& 3.09\\
		10 &$1/1024$&  2.51E-01   &	0.99     &	 2.35E-03	&	1.99   &	1.89E-05	&	2.98   & 1.38E-04	& 2.95\\
		\hline

	\end{tabular}
	\label{table:interp_2D}
\end{table}

Then we consider the adaptive sparse grid method. Function $f_1( {\bf x} ) $ has a 1D singularity that is not along the grid directions. It is well known that the standard sparse grid method without adaptivity cannot resolve such singular or discontinuous profiles. Here, we fix $N=11,\varepsilon=10^{-4}$ and compare the performance of the scheme with $L^1, L^2$, and $L^{\infty}$ norm based criteria. We present the surface and adaptive grids based on different criteria with $P=2$ for function  $f_1( {\bf x} ) $ in Figure \ref{fig:surface_point_2D_Ma_N11}.
The grid with $L^1$ norm criteria is the most sparse, but the surface profile is slightly worse than the other two criteria. The $L^{\infty}$ norm based criteria use the  most DoFs but do not provide the significantly better resolution than $L^2$ norm based criteria. Based on the performance and cost of the three criteria, we will use $L^{2}$ norm based criteria in all simulations.

In Figure \ref{fig:surface_point_2D_Ma_N11}(d) and Figure \ref{fig:interp_fun_2D_grid}(a)-(c), we plot the adaptive grids by  Lagrange $P^1, P^2, P^3$ and Hermite $P^3$ bases for function $f_1( {\bf x} ) $.  In all cases, we can observe that adaptive methods can   capture the right positions of the singularities.
Compared with linear interpolation, quadratic and cubic interpolations are more concentrated  near the singularity. Comparing Hermite $P^3$ with Lagrange $P^3$ interpolation, the Hermite methods have more compact representation near the singularity, although with larger DoFs.  
We also compare the efficiency of the methods by  plotting the $L^2$ error  vs DoFs for all four methods. Here the error parameter $\varepsilon$ varies from $1.0E-2$ to $1.0E-6$ for Lagrange $P^1$ bases and $1.0E-1$ to $1.0E-5$ for Lagrange $P^2, P^3$ and Hermite $P^3$ bases. We consider the $L^2$ error on the whole domain and the $L^2$ error in smooth region excluding the part of the domain that is within $0.1$ distance to the singularity.
The results are shown in  Figure \ref{subfigb}. For regular $L^2$ error, Lagrange $P^2, P^3$ seem more efficient than Lagrange $P^1$ and Hermite $P^3$ interpolations. When we remove the singular regions and when $\varepsilon$ is small, the performance of Hermite $P^3$ method is similar to Lagrange $P^2, P^3$ methods. Based on the least square linear curve fitting function ``lsqcurvefit'' in Matlab, we obtain the slopes of the curves in Figure \ref{subfigb}. The slopes are -1.33, -1.59, -1.51, -1.85 for Lagrange $P^1, P^2, P^3$ and Hermite $P^3$ bases, respectively, with the whole domain, while the slopes are -1.58, -1.82, -1.83, -2.26 for Lagrange $P^1, P^2, P^3$ and Hermite $P^3$ bases, respectively, without singular regions. This example shows that for low dimensional functions in $C^0$ space, the higher order methods are preferred compared to $P^1$ collocation methods.

\begin{figure}[htp]
	\begin{center}
		\subfigure[$L^1$ criteria] {\includegraphics[width=.46\textwidth]{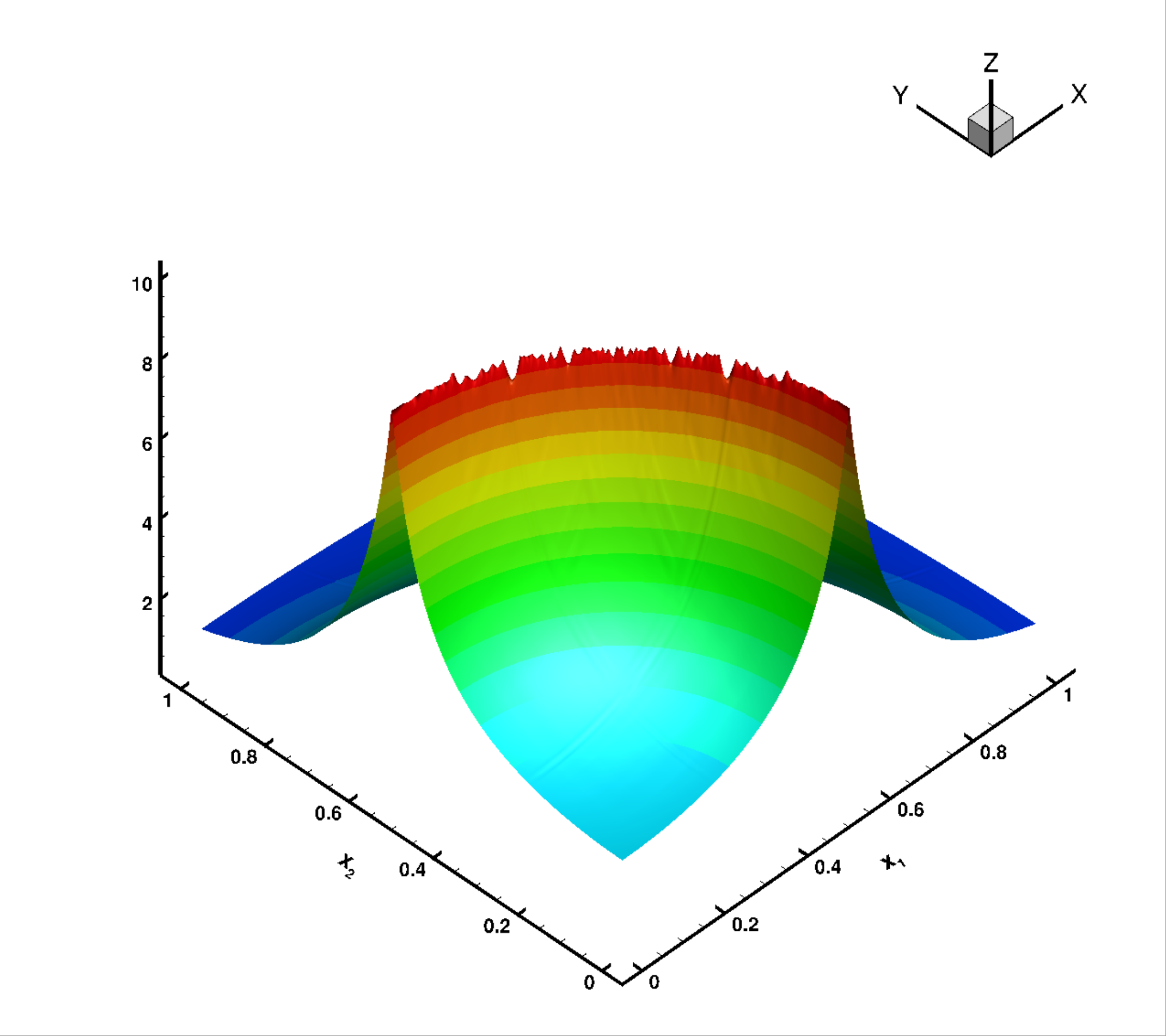}}
		\subfigure[$L^1$ criteria (1545 DoF) ] {\includegraphics[width=.46\textwidth]{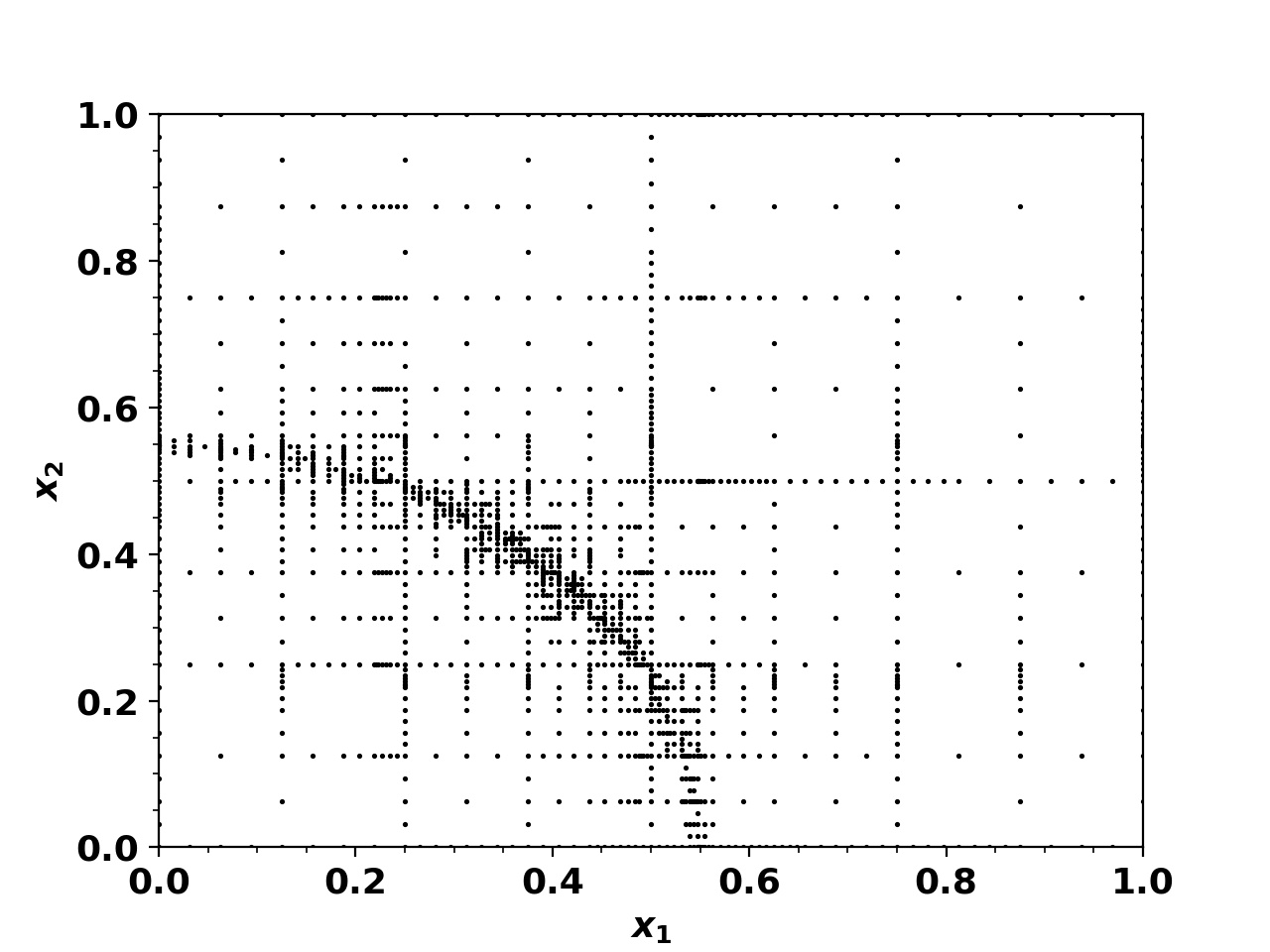}}\\
		\subfigure[$L^2$ criteria] {\includegraphics[width=.46\textwidth]{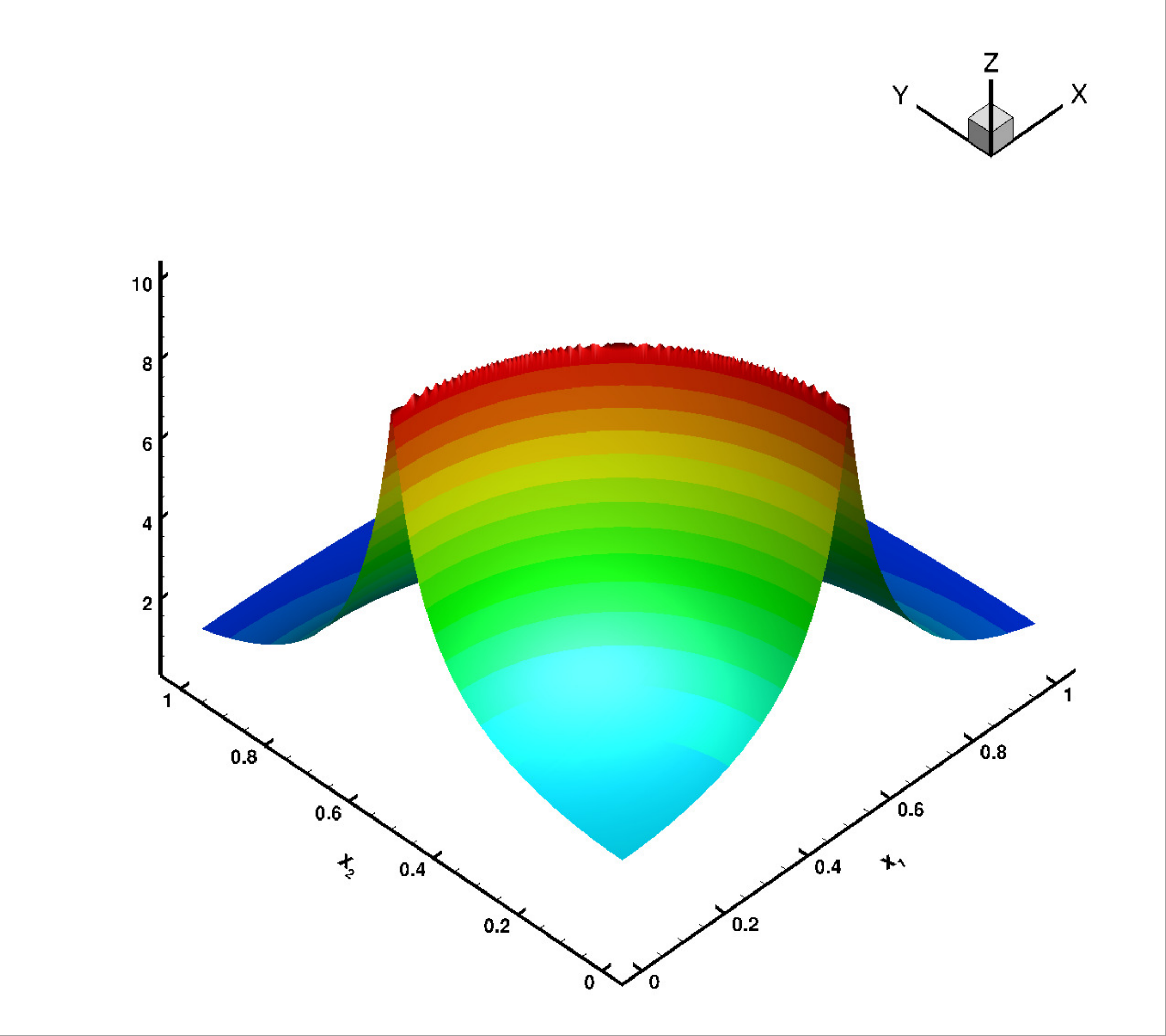}}
		\subfigure[$L^2$ criteria (8000 DoF) ] {\includegraphics[width=.46\textwidth]{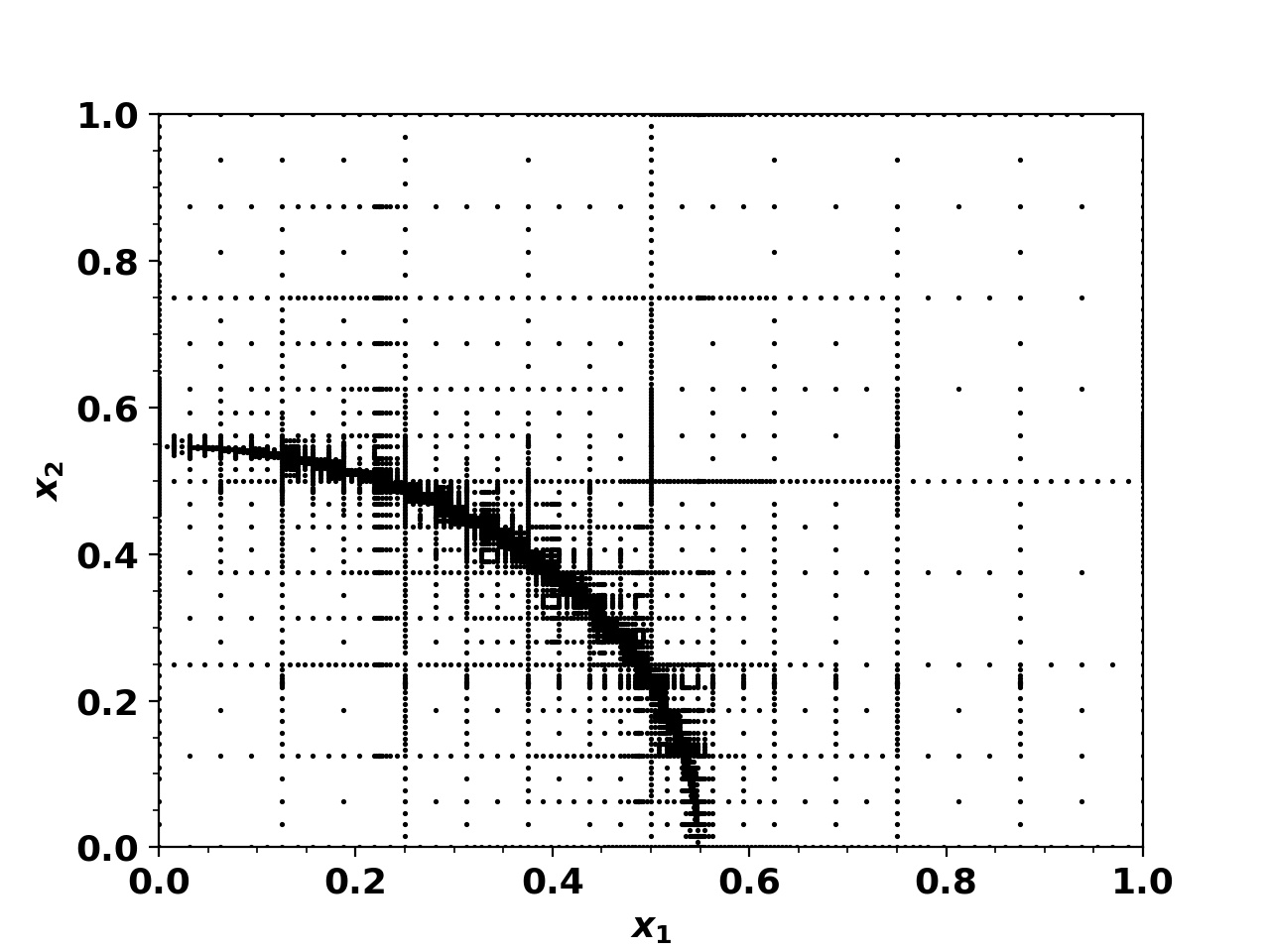}}\\
		\subfigure[$L^{\infty}$ criteria] {\includegraphics[width=.46\textwidth]{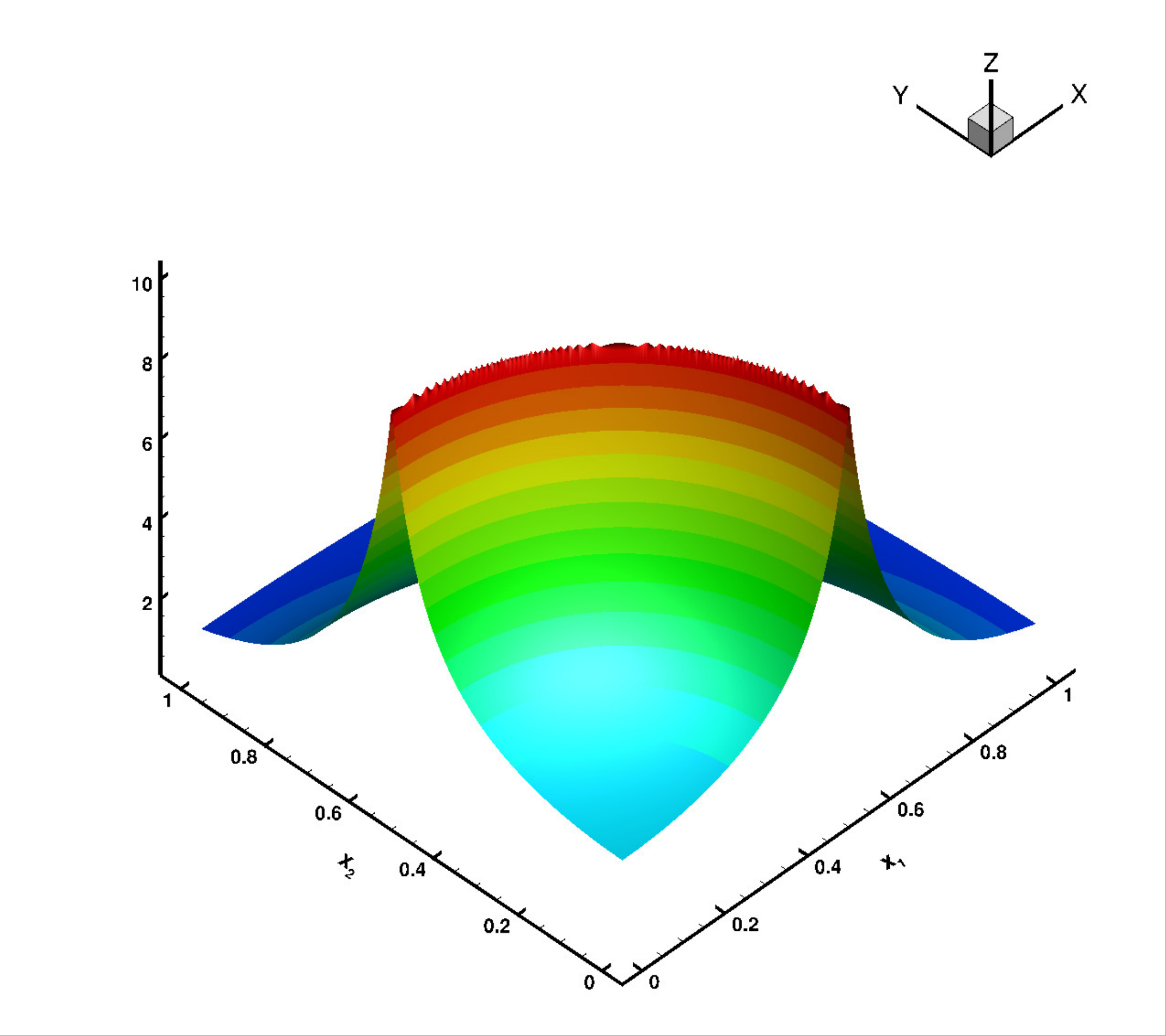}}
		\subfigure[$L^{\infty}$ criteria  (39407 DoF) ] {\includegraphics[width=.46\textwidth]{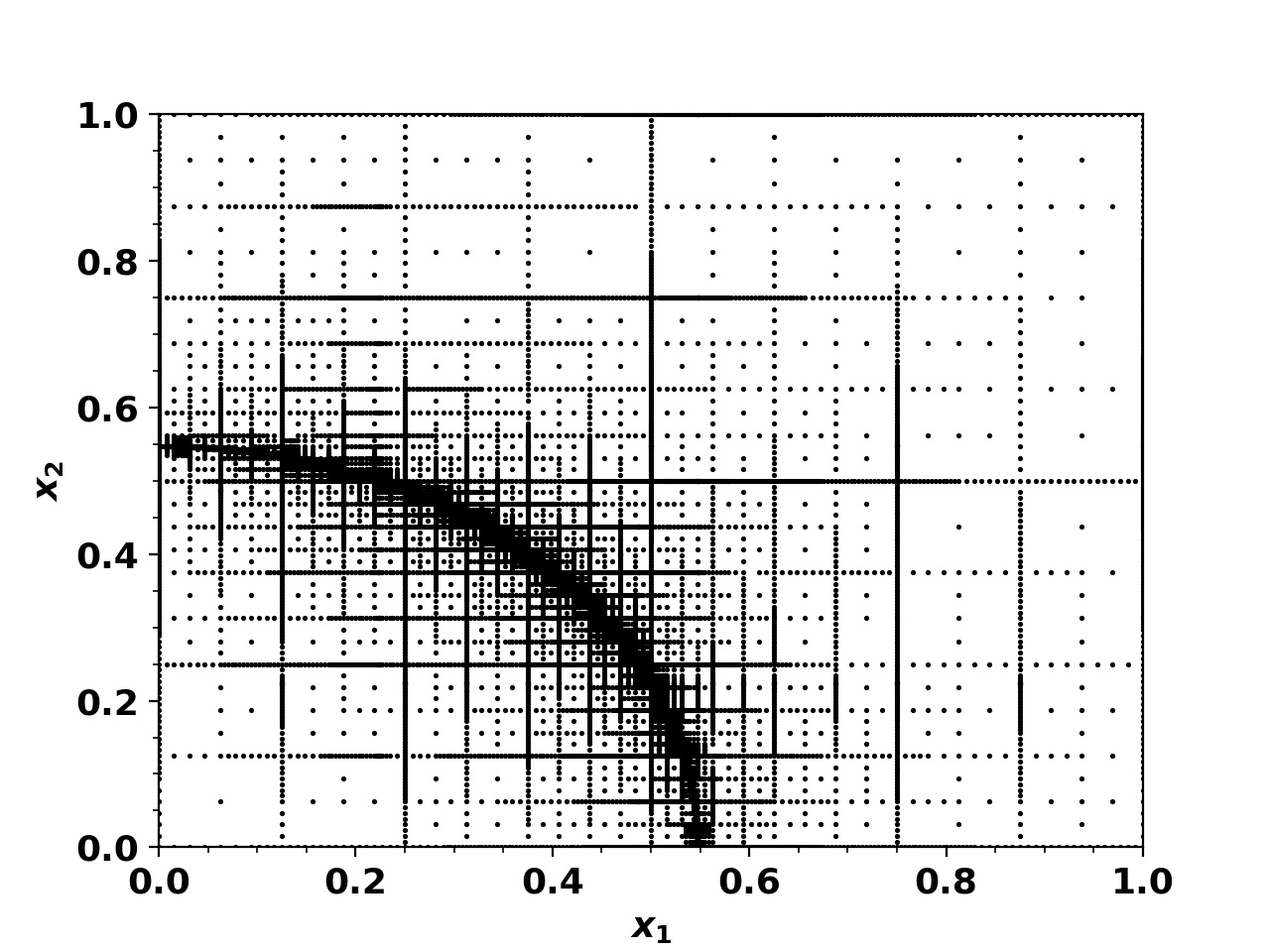}}\\
	\end{center}
	\caption{Surface and adaptive grids for function $f_1( {\bf x} )$ based on different criteria. $\varepsilon = 10^{-4}, N=11$, Lagrange $P^2$. }
	\label{fig:surface_point_2D_Ma_N11}
\end{figure}

\begin{figure}[]
	\begin{center}
		\subfigure[Lagrange $P^1$ (5456 DoF)]{\includegraphics[width=.49\textwidth]{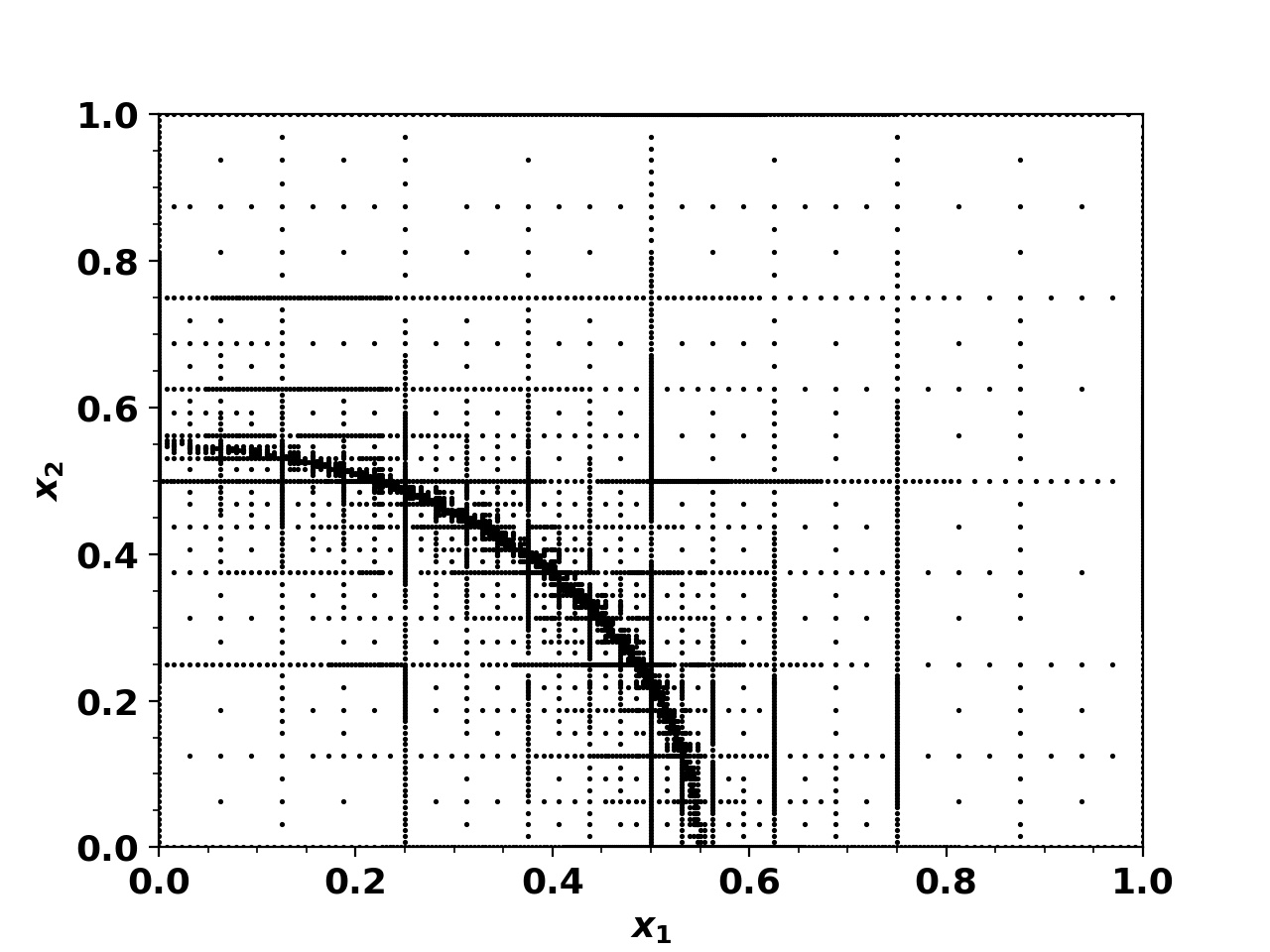}}
		\subfigure[Lagrange $P^3$ (9364 DoF)]{\includegraphics[width=.49\textwidth]{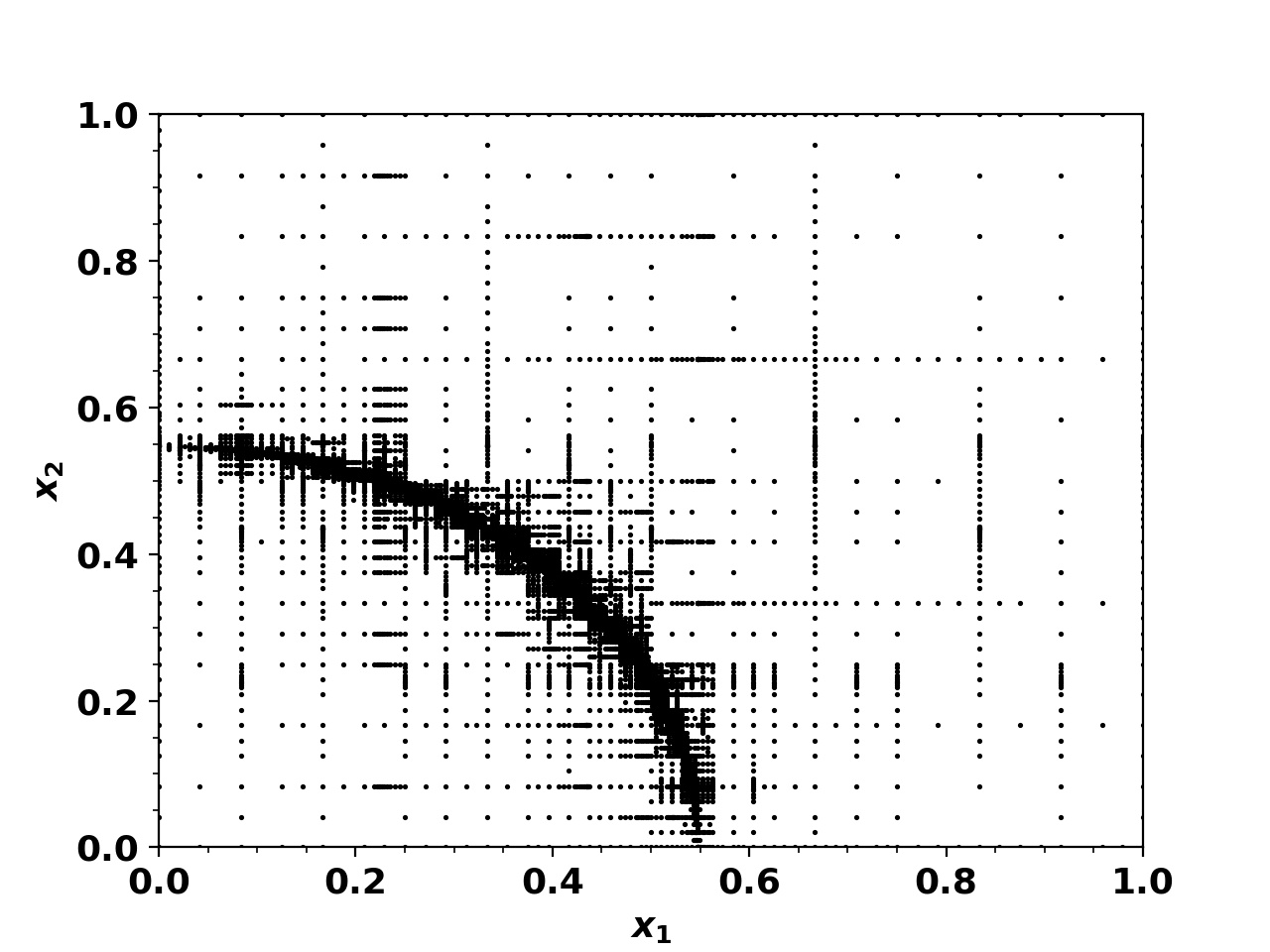}}
		\subfigure[Hermite $P^3$ (11503 DoF)]{\includegraphics[width=.49\textwidth]{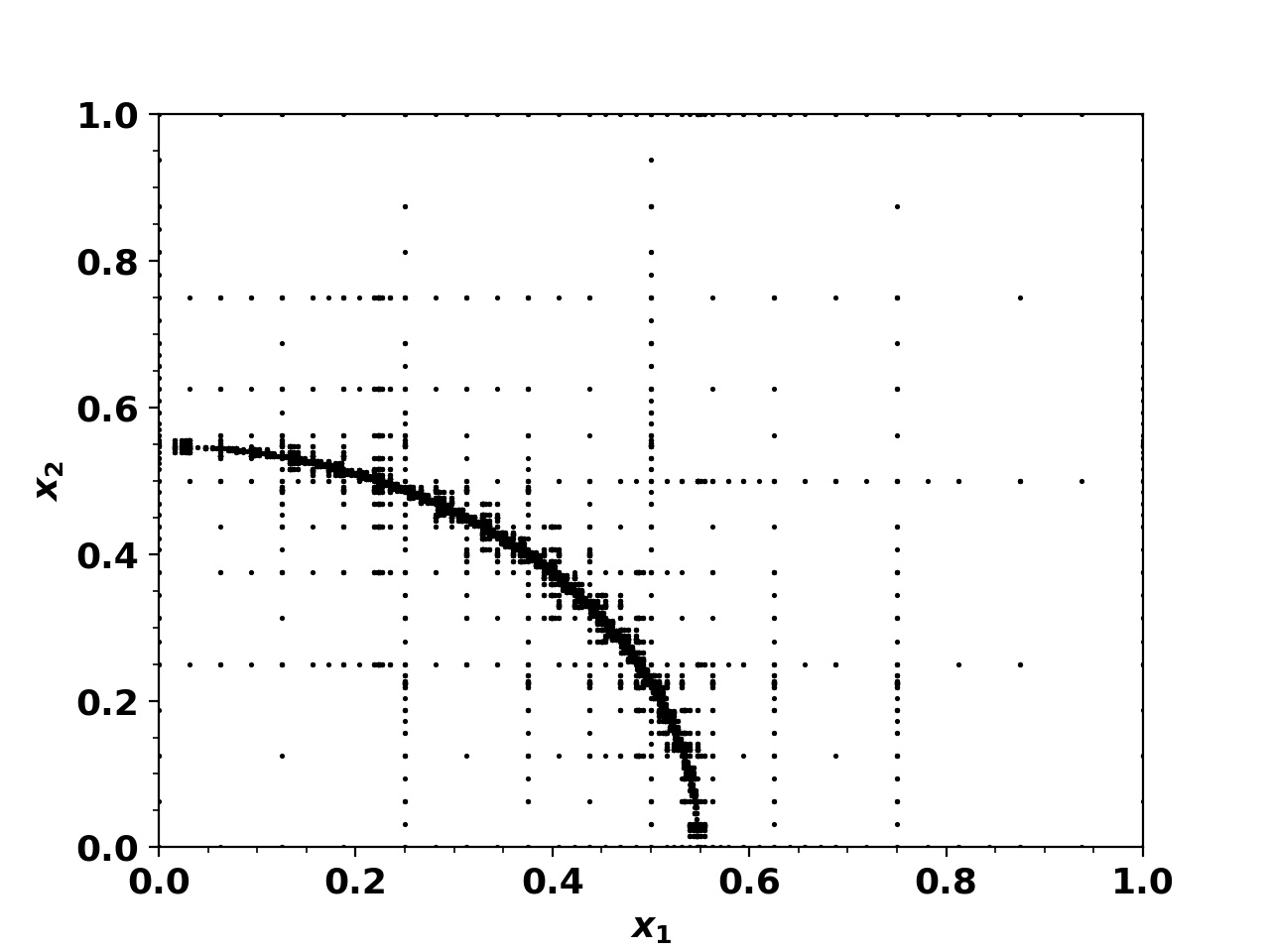}}
		\subfigure[$L^2$ error vs DoFs]{\includegraphics[width=.49\textwidth]{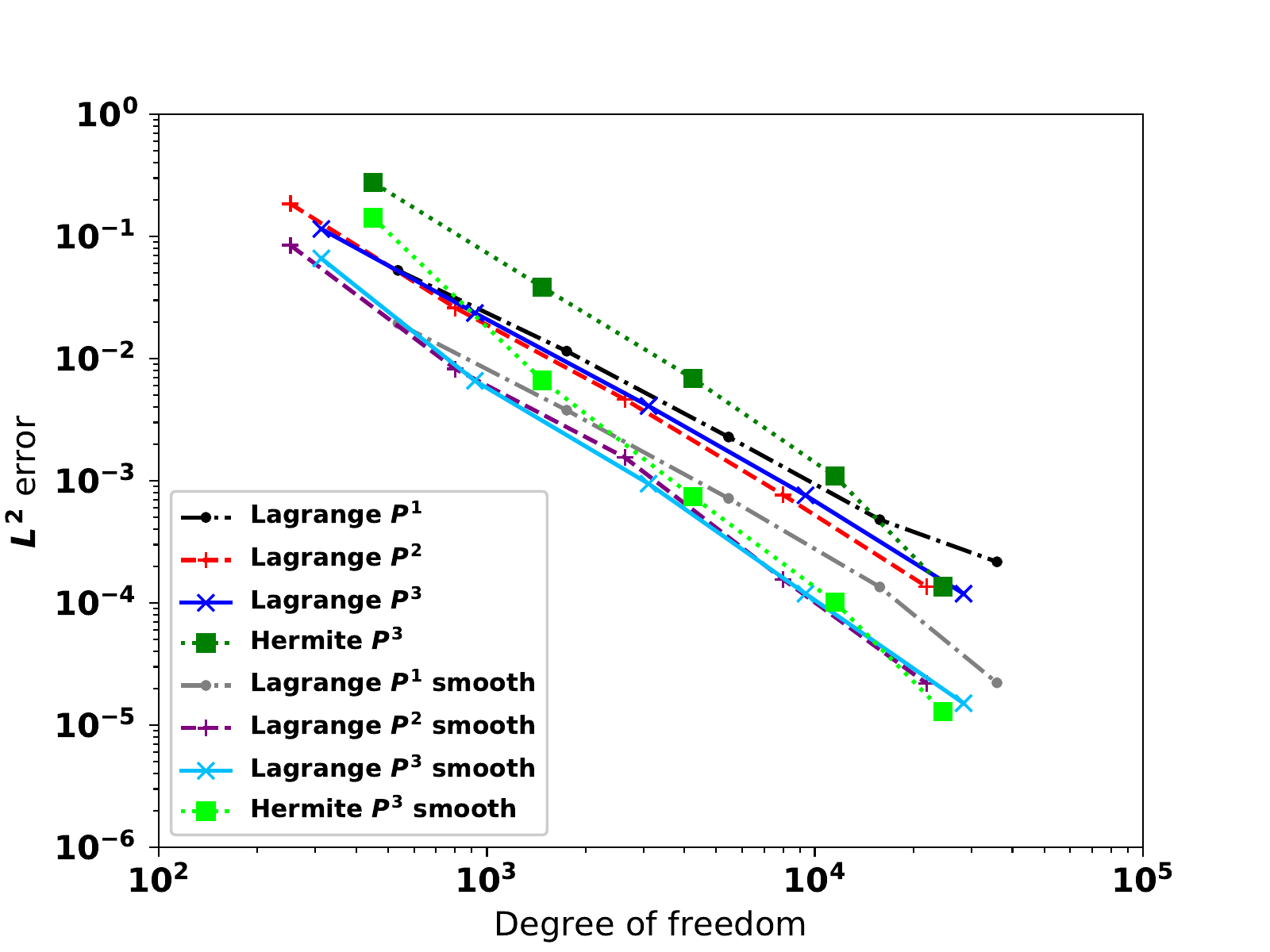}\label{subfigb}}
	\end{center}
	\caption{The adaptive grids and  error comparisons for function $f_1( {\bf x} )$.    
	}
	\label{fig:interp_fun_2D_grid}
\end{figure}

Now we consider functions $f_2({\bf x}), f_3({\bf x}), f_4({\bf x})$ in higher dimensions $d=10$.
 Here $c_i$ is taken as $ \frac{1}{2^{i+2}}$ and the maximum level is set to be very large $N=30$ for all  functions, which implies that the adaptive space is thresholded by the error parameter $\varepsilon.$
  For function $f_2({\bf x})$, the error parameter $\varepsilon$ varies from $1.0E-3$ to $1.0E-11$ for  Lagrange  $P^1$ bases and $1.0E-3$ to $1.0E-13$ for Lagrange  $P^2, P^3$ and Hermite $P^3$ bases.
For function $f_3({\bf x})$, the error parameter $\varepsilon$ varies from $1.0E-3$ to $1.0E-9$ for  Lagrange  $P^1$ bases and $1.0E-2$ to $1.0E-8$ for Lagrange  $P^2, P^3$ and Hermite $P^3$ bases.
For function $f_4({\bf x})$, the error parameter $\varepsilon$ varies from $1.0E-2$ to $1.0E-8$ for  Lagrange  $P^1, P^2, P^3$ and Hermite $P^3$ bases. 
In Figure \ref{fig:integral_error_10D_ele_pt_coa}, we show the $L^2$ errors and the quadrature errors of various interpolations vs DoFs.  For  continuous function $f_2({\bf x})$, higher order interpolations outperform lower order ones. $P^2$ and $P^3$ interpolations provide drastic improvement over $P^1$ interpolation, though the difference between $P^2$ and $P^3$ interpolations is rather small.  When the mesh is more refined, the $P^3$ interpolations (both Lagrange and Hermite) are slightly better than $P^2$ interpolation. 
For $C^0$ function $f_3({\bf x})$ or discontinuous function $f_4({\bf x})$, the performance of   all methods are qualitatively similar. 



%
%
%

\begin{figure}[htp]
	\begin{center}
		\subfigure[$f_2({\bf x})$]{\includegraphics[width=.49\textwidth]{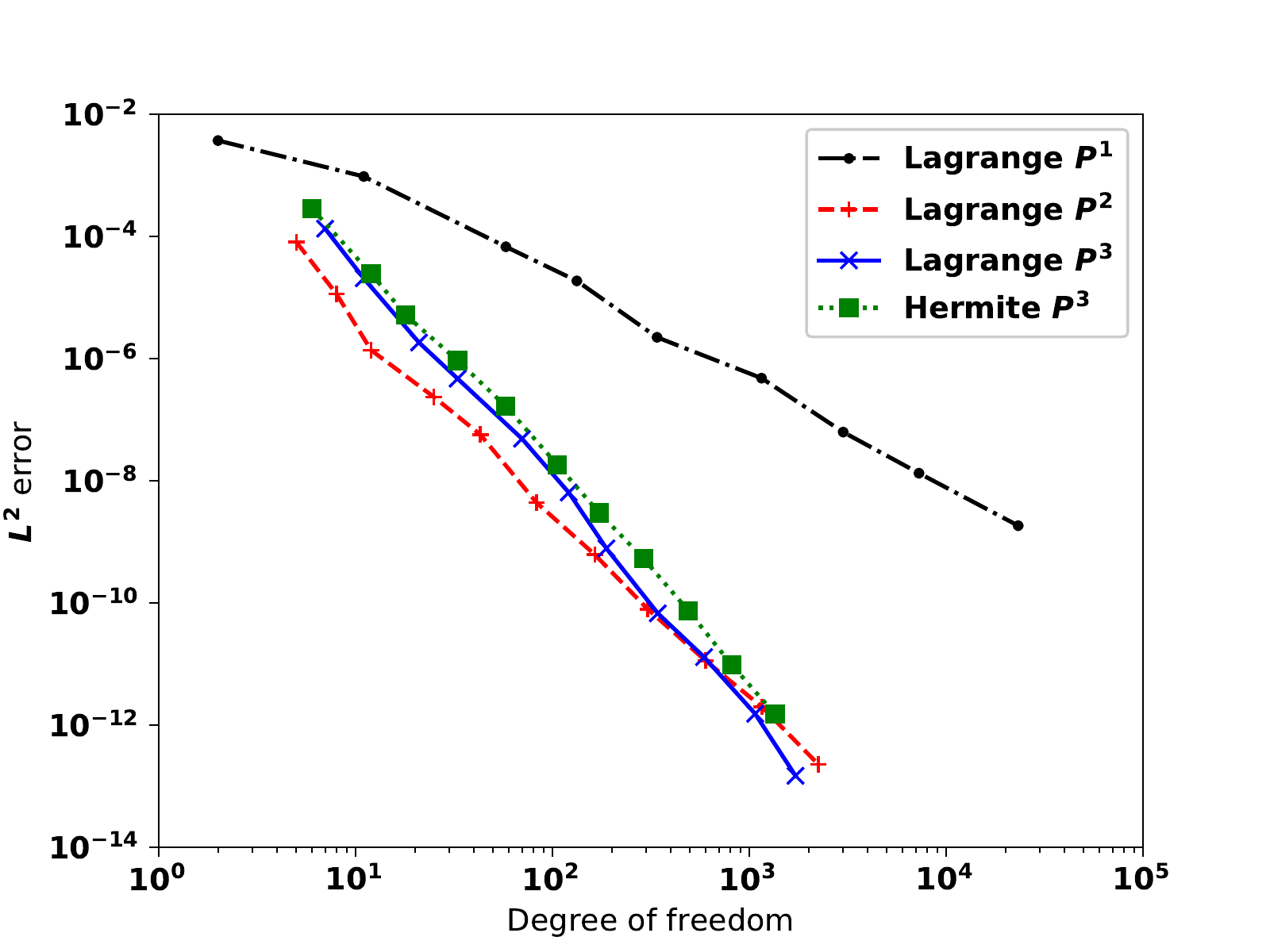}}
		\subfigure[$f_2({\bf x})$]{\includegraphics[width=.49\textwidth]{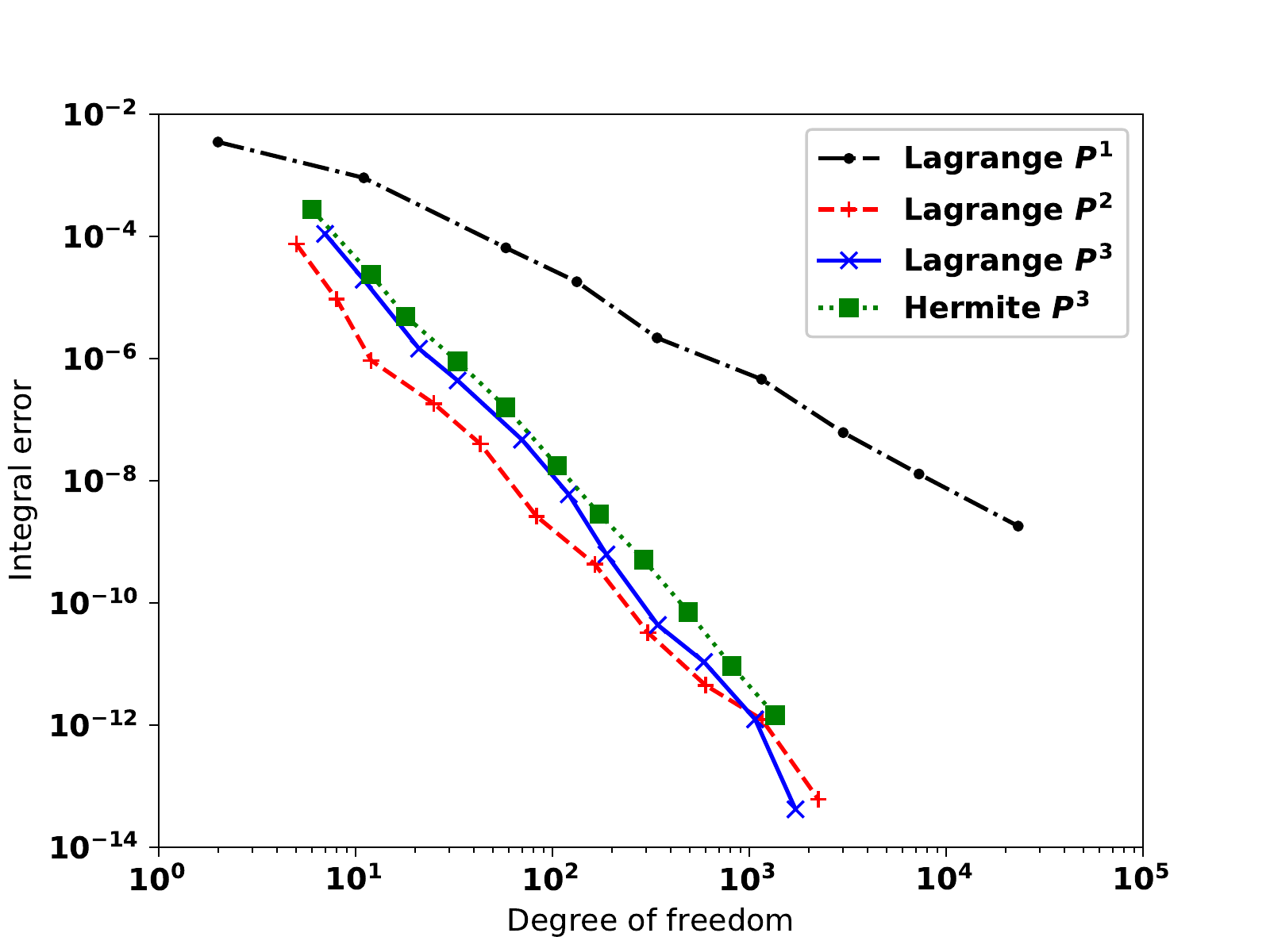}}\\
		\subfigure[$f_3({\bf x})$]{\includegraphics[width=.49\textwidth]{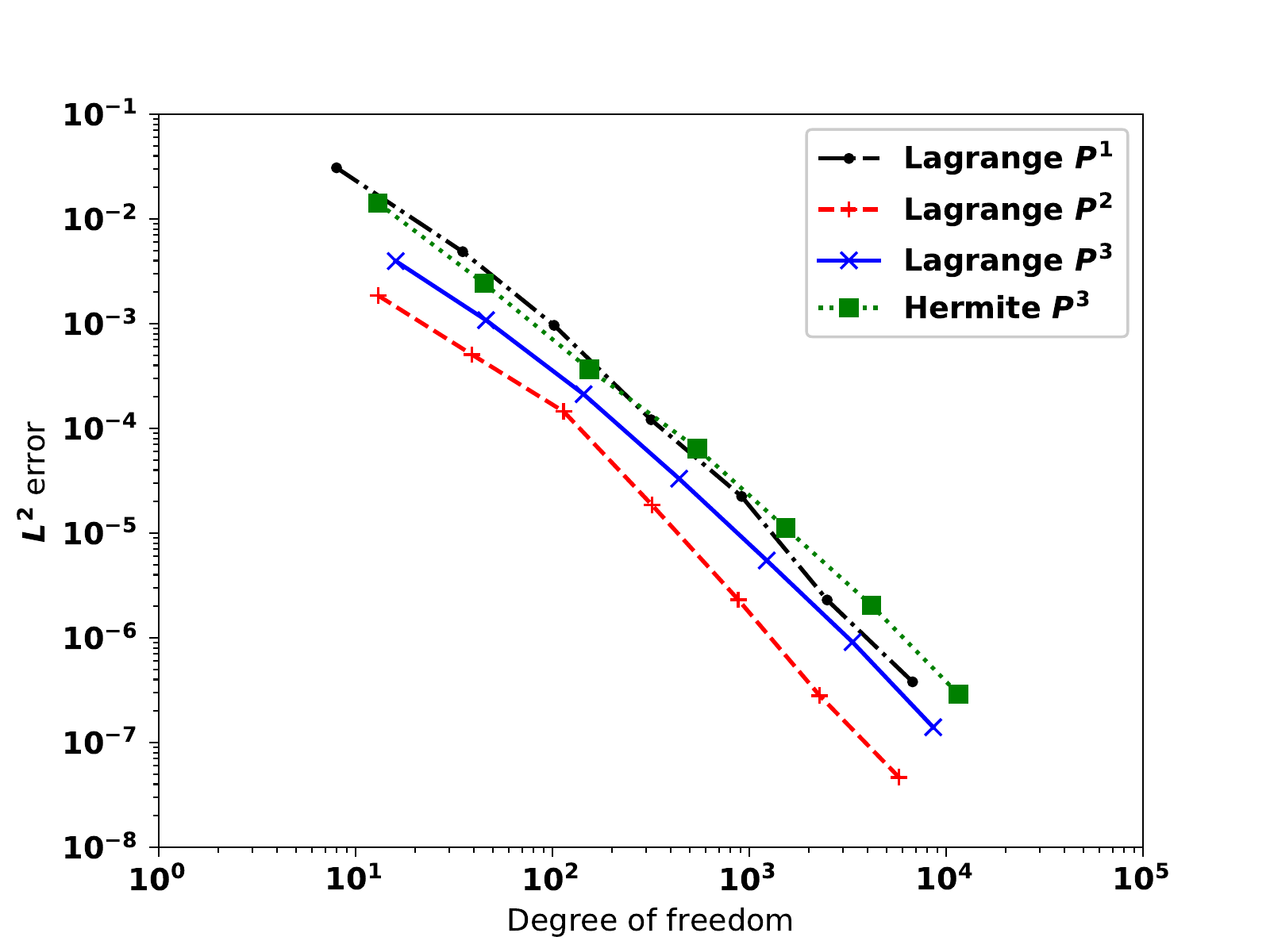}}
		\subfigure[$f_3({\bf x})$]{\includegraphics[width=.49\textwidth]{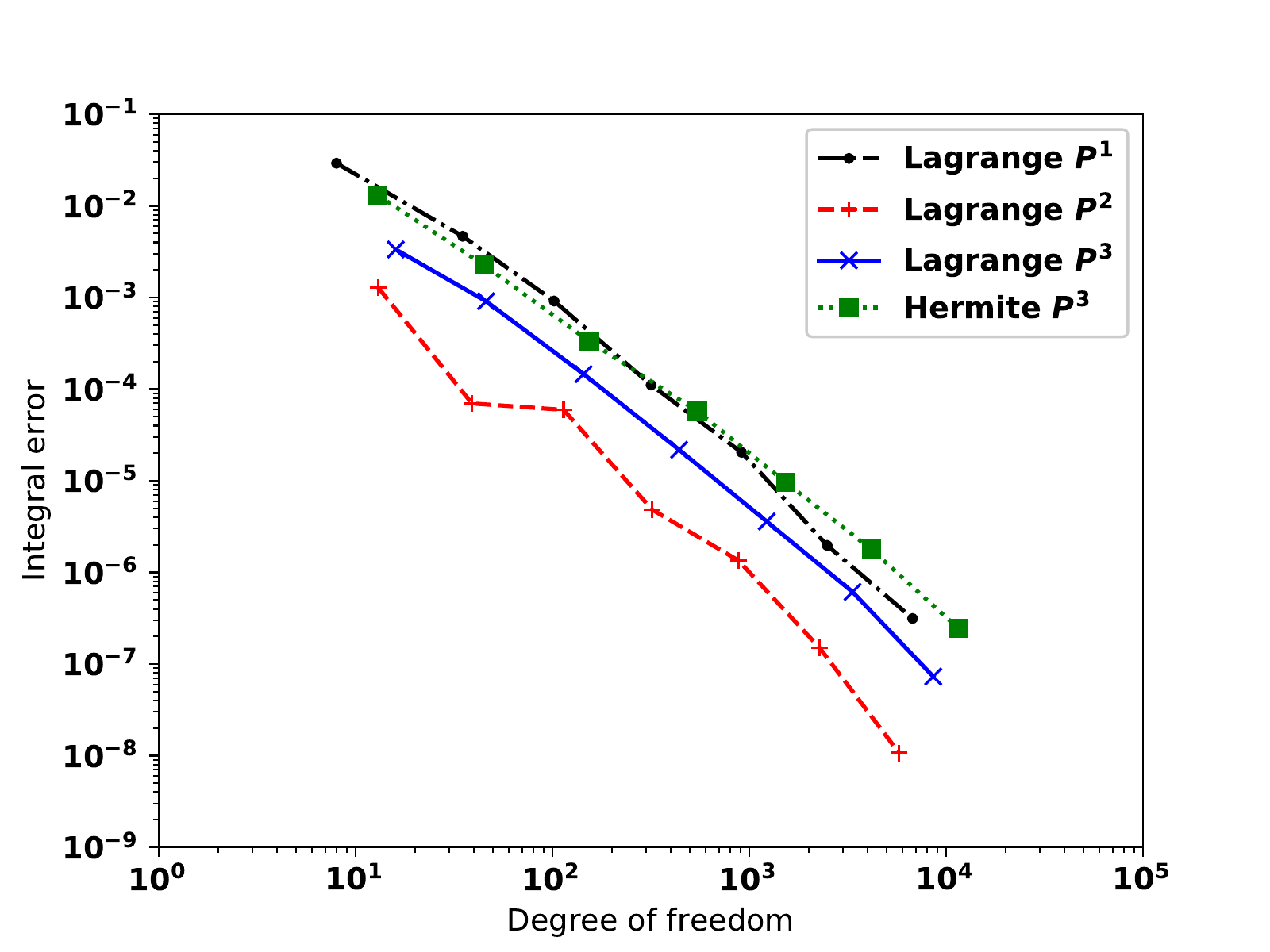}}\\
		\subfigure[$f_4({\bf x})$] {\includegraphics[width=.49\textwidth]{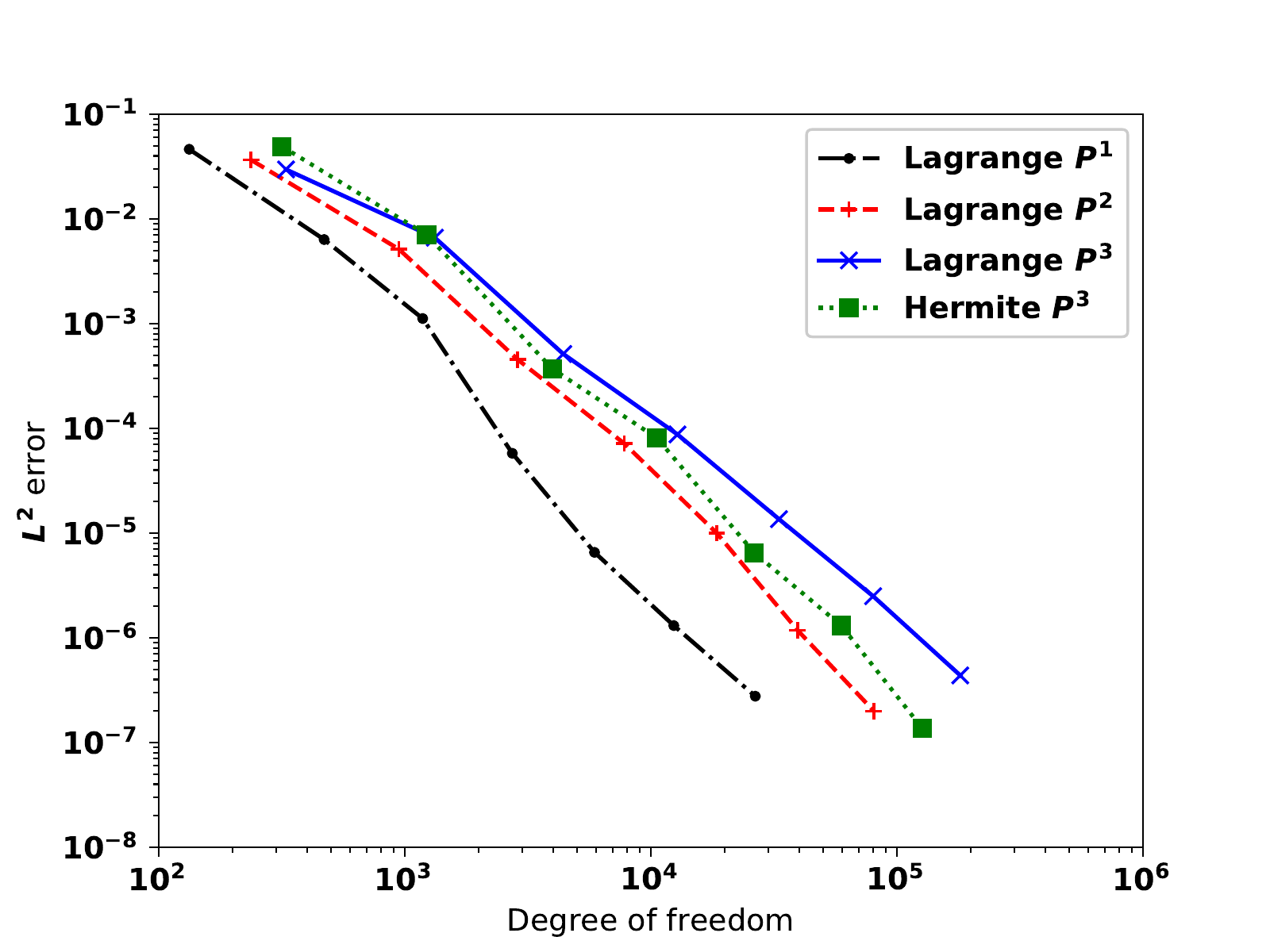}}
		\subfigure[$f_4({\bf x})$] {\includegraphics[width=.49\textwidth]{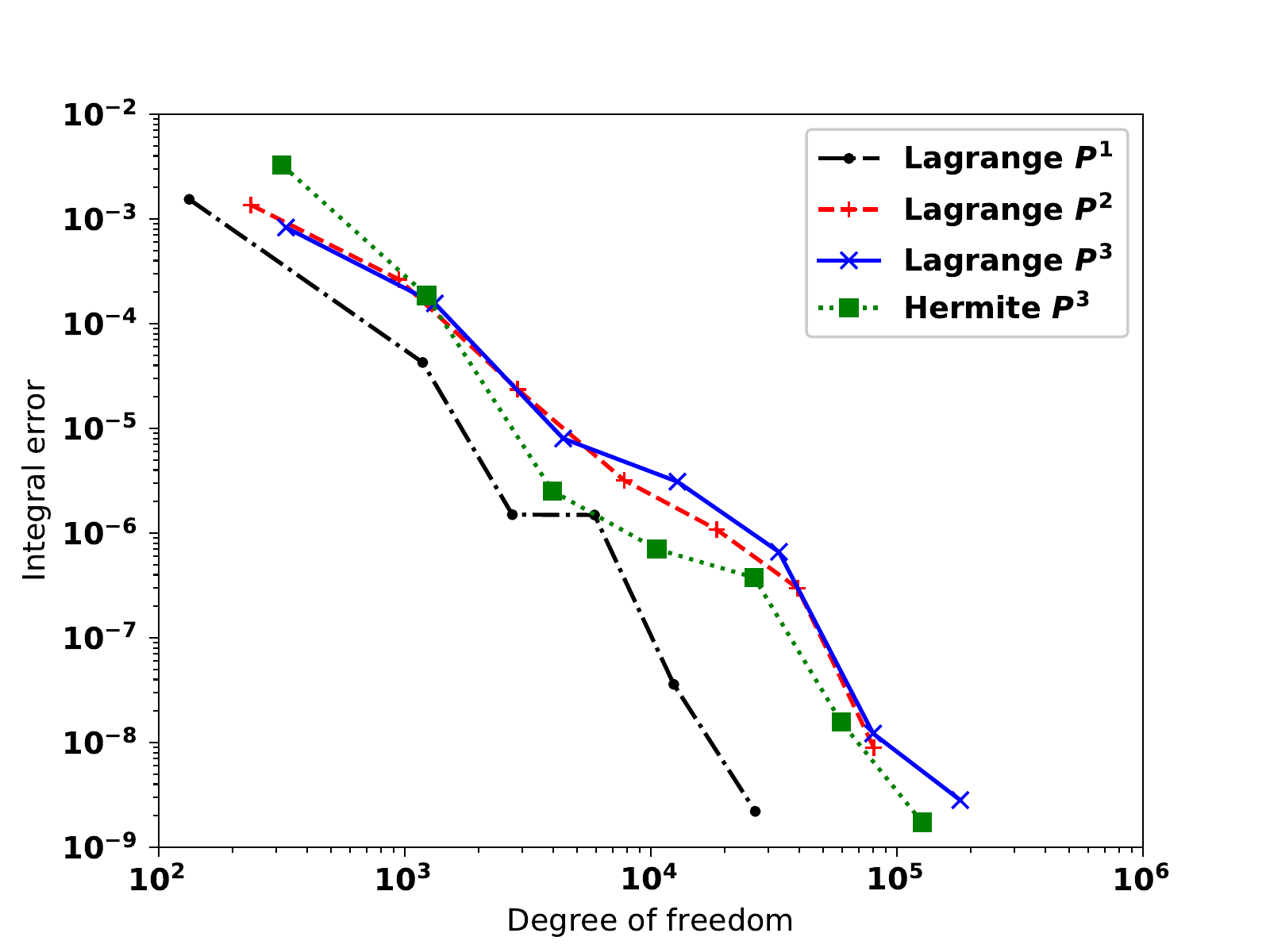}}
	\end{center}
	\caption{Errors vs DoFs. $d=10.$  Left: $L^2$ errors vs DoFs; right: Quadrature errors vs DoFs.	}
	\label{fig:integral_error_10D_ele_pt_coa}
\end{figure}

\subsection{Applications}
Now we consider several examples in UQ. Note that another application area is in the design of adaptive multiresolution DG methods, which has been considered in  \cite{huang2019adaptive}.

 \subsubsection{Stochastic elliptic equations}

We consider the following problem in one spatial dimension and $d>1$ random dimensions \cite{xiu2005high}:
\begin{equation}
\label{elliptic}
\frac{d}{dx}[a(y,x)\frac{du}{dx}(y,x)] = 0, \qquad (y,x) \in \varGamma \times (0,1),
\end{equation}
with boundary conditions 
$$ u(y,0) = 0, \qquad u(y,1)=1, \qquad y \in \varGamma, $$
where $(0,1)$ is the one-dimensional physical space and $\varGamma$ is the random space.
We assume that the random diffusivity has the form 
\begin{equation}
a(y,x) = 1 + \sigma \sum_{k=1}^{d} \frac{1}{k^2\pi^2} \cos(2 \pi kx)Y^k(\omega), 
\label{diff}
\end{equation}
where $Y^k(\omega) \in [-1,1], k=1, \cdots, d,$ are the independent uniformly distributed random variables.
The series in \eqref{diff} is convergent and strictly positive as $d \rightarrow \infty$. We have
$$E(a(y,x)) = 1,  \qquad 1-\frac{\sigma}{6} < a(y,x) < 1+\frac{\sigma}{6}.$$
A  spectral method based on Chebyshev polynomial is used for the spatial discretization.
We use 31 Chebyshev points  such that the error in random space is dominant.
Then the  sparse grid collocation method is used to approximate the random space.

In Figures \ref{fig:ellip_error_2D} and  \ref{fig:ellip_error_6D}, we present the errors in mean and variance with respect to maximum mesh levels with Lagrange $P^K,K=1,2,3$  bases for $d=2, 6.$ To compute the errors in mean and variance, 
we  use the numerical solution with maximum mesh level 8 ($d=2$) and 5 ($d=6$) as the reference solution.
We observe that the errors with $P^2$ and $P^3$ bases are much better than those with $P^1$ bases.
It is noted that higher order interpolations can achieve round-off errors even when the mesh is very coarse. 

\begin{figure}[htp]
	\begin{center}
		\subfigure[mean]{\includegraphics[width=.49\textwidth]{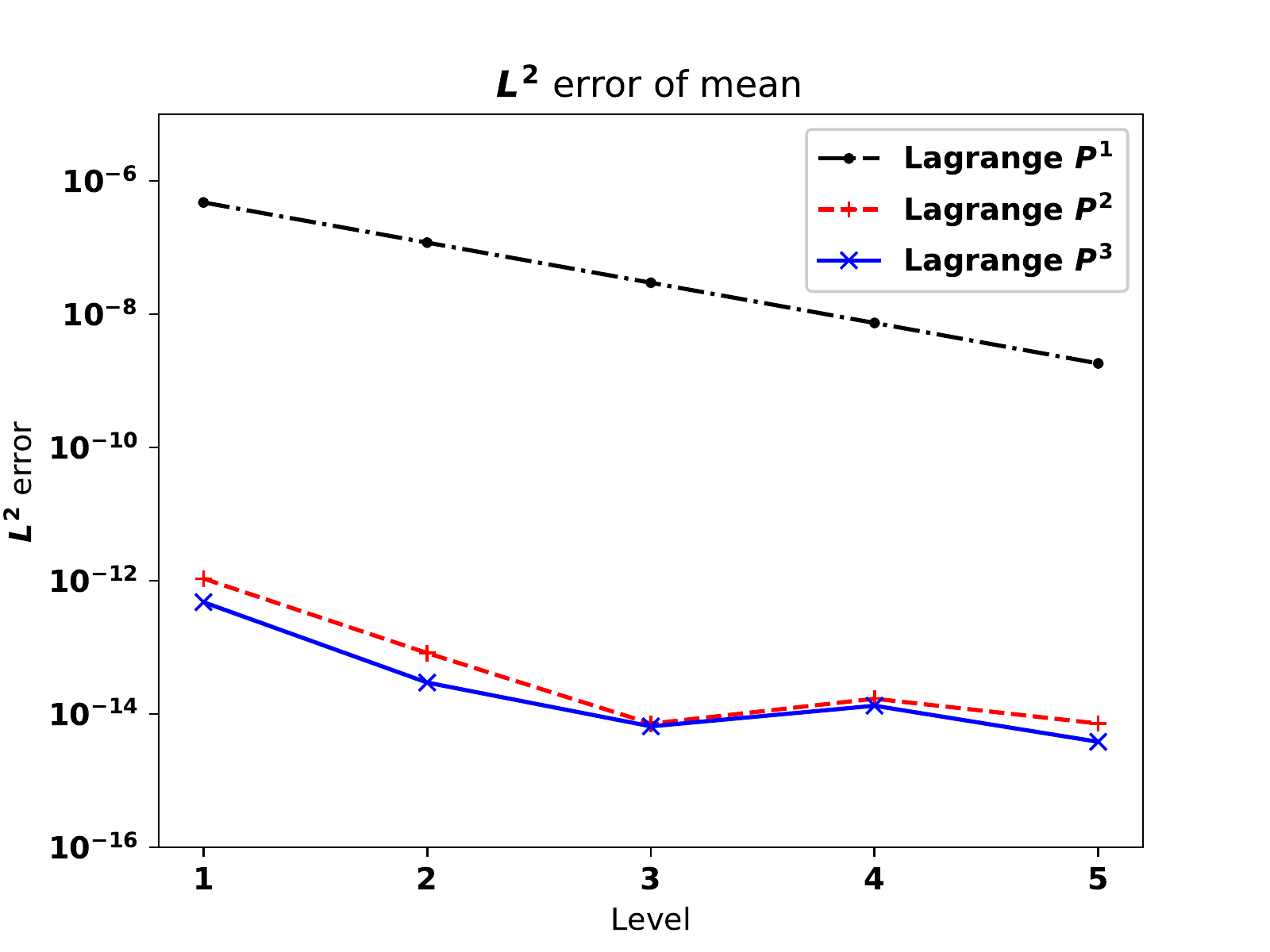}}
		\subfigure[variance]{\includegraphics[width=.49\textwidth]{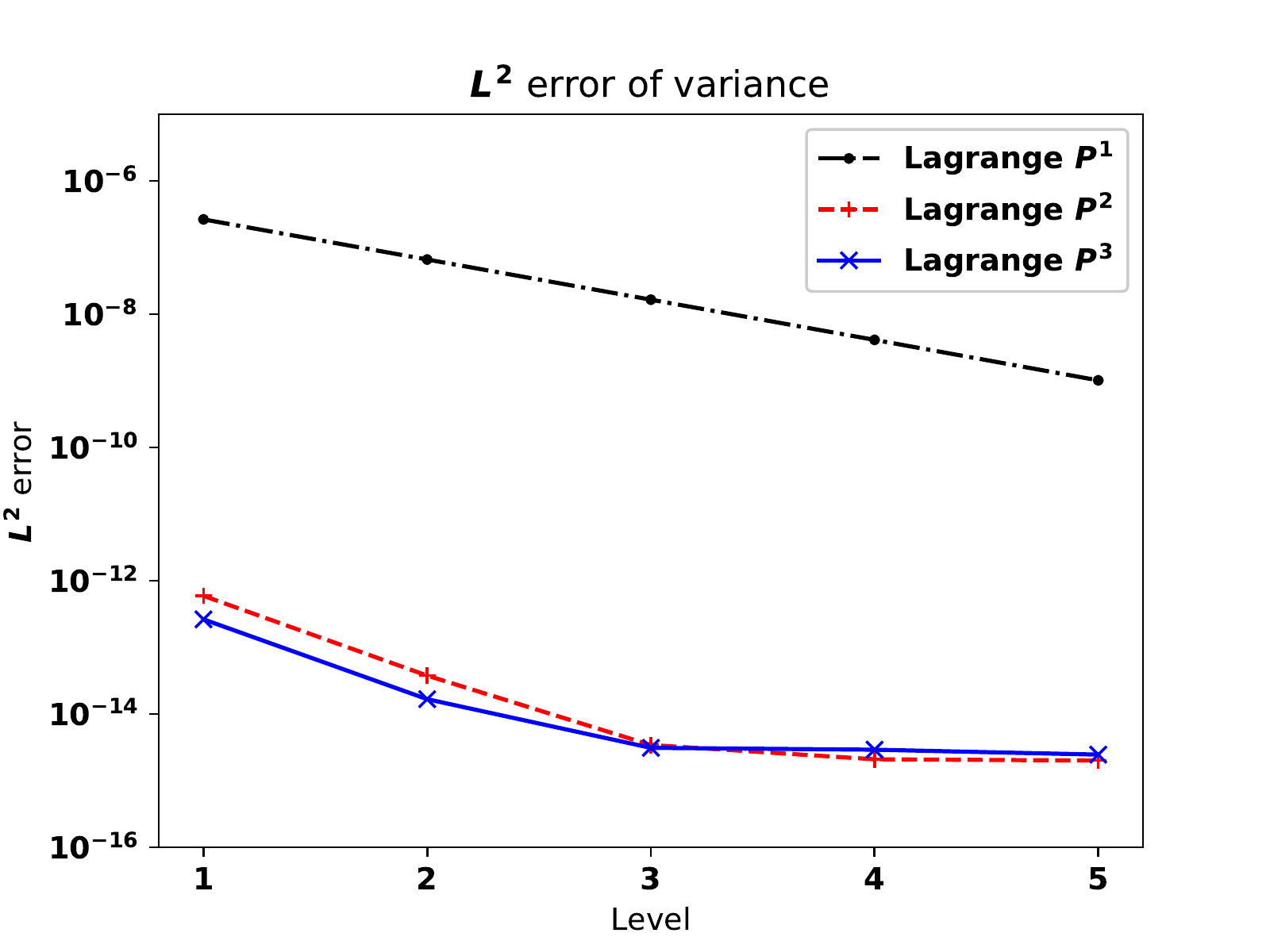}}
	\end{center}
	\caption{Errors in mean and variance with respect to maximum mesh levels in low dimensional random inputs ($d=2$).  (a) mean; (b) variance.  
	}
	\label{fig:ellip_error_2D}
\end{figure}

In Figure  \ref{fig:ellip_m_v_6D}, we show the mean and variance solutions of \eqref{elliptic} with maximum mesh level $N=3$ for $d=6$ random inputs. We observe that the mean and variance solutions are almost the same for different order bases. 

\begin{figure}[htp]
	\begin{center}
		\subfigure[mean]{\includegraphics[width=.49\textwidth]{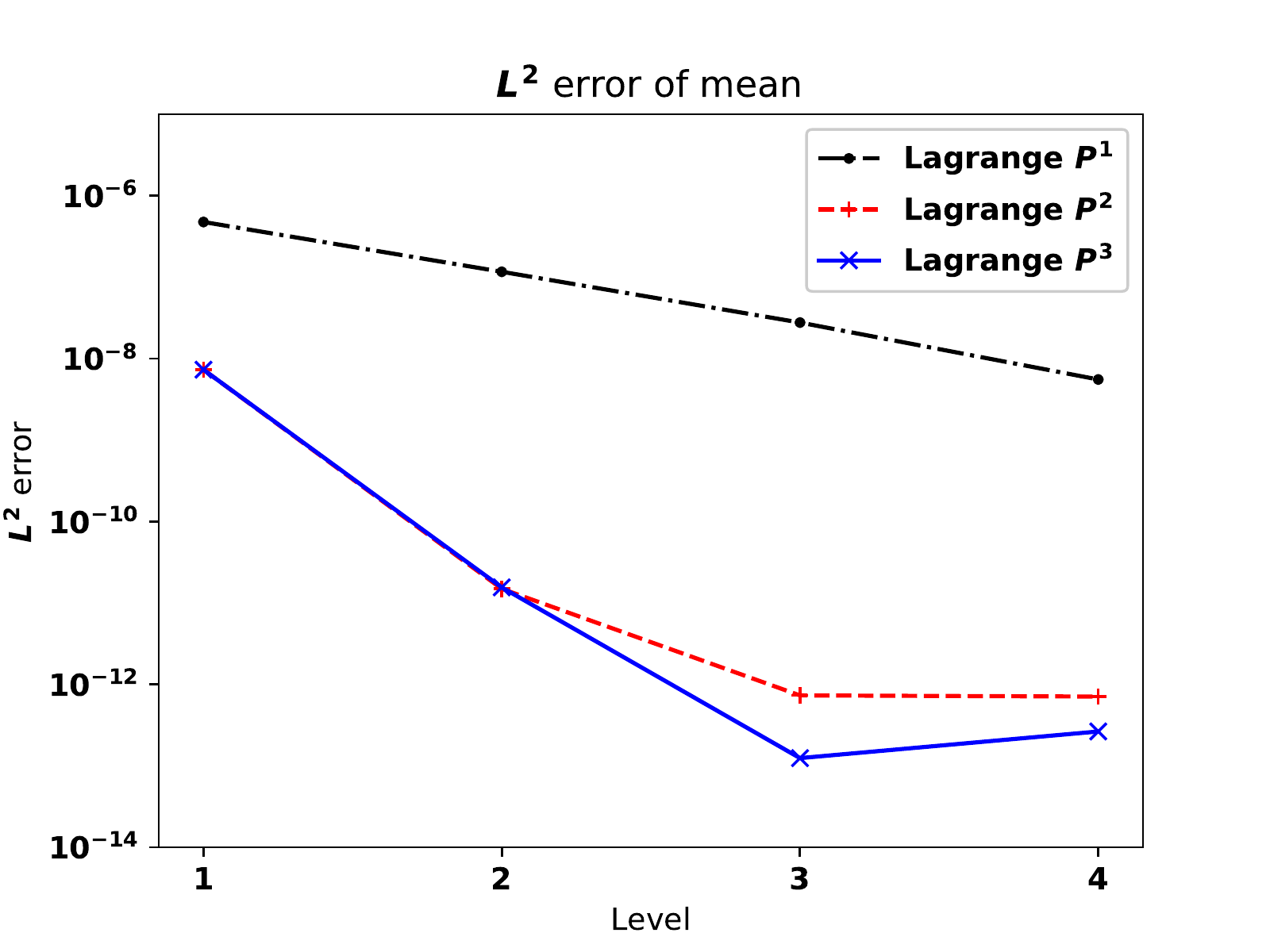}}
		\subfigure[variance]{\includegraphics[width=.49\textwidth]{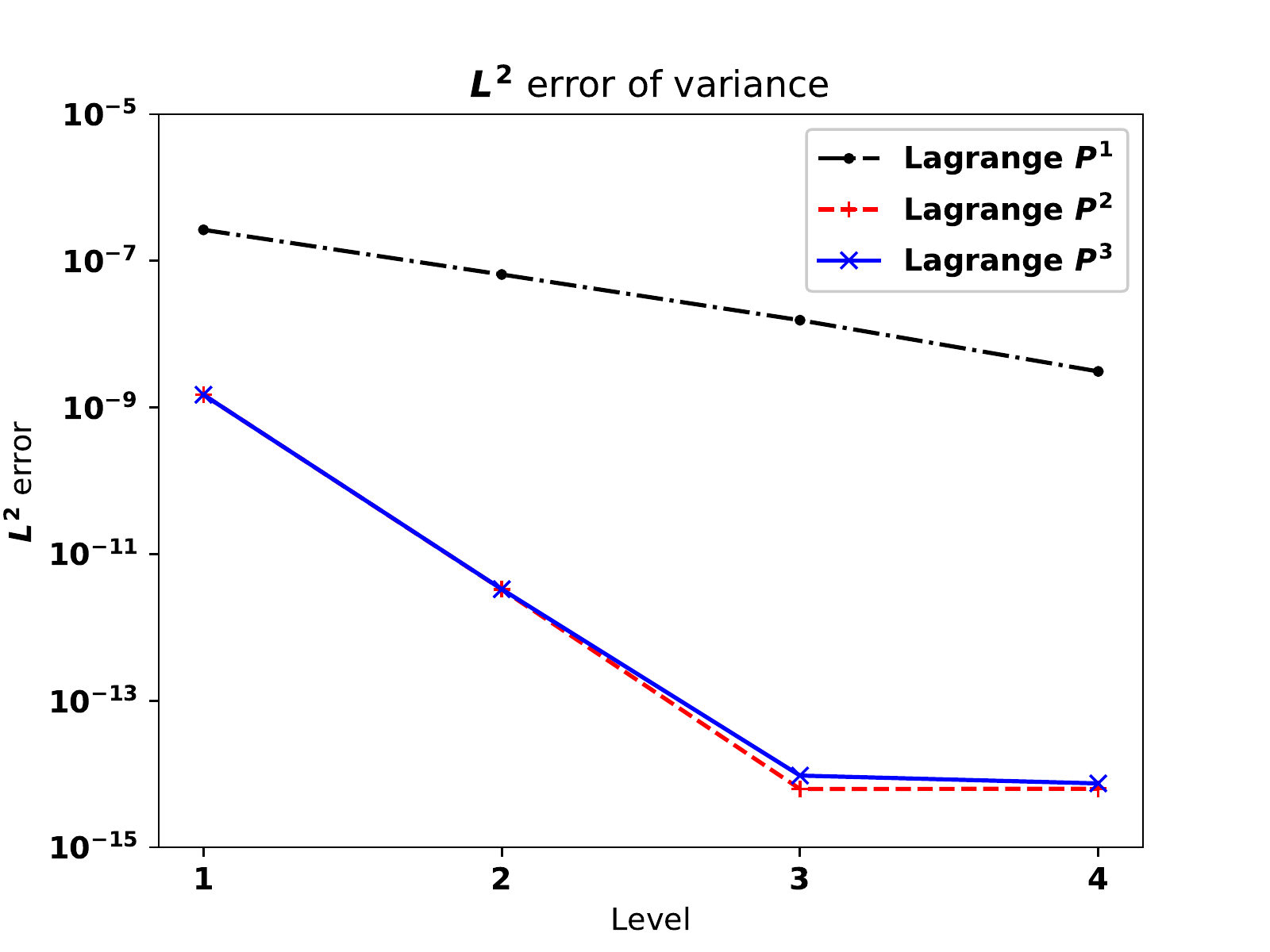}}
	\end{center}
	\caption{Errors in mean and variance with respect to maximum mesh levels in moderate dimensional random inputs ($d=6$).  (a) mean; (b) variance.  
	}
	\label{fig:ellip_error_6D}
\end{figure}

\begin{figure}[htp]
	\begin{center}
		\subfigure[Lagrange $P^1$]{\includegraphics[width=.49\textwidth]{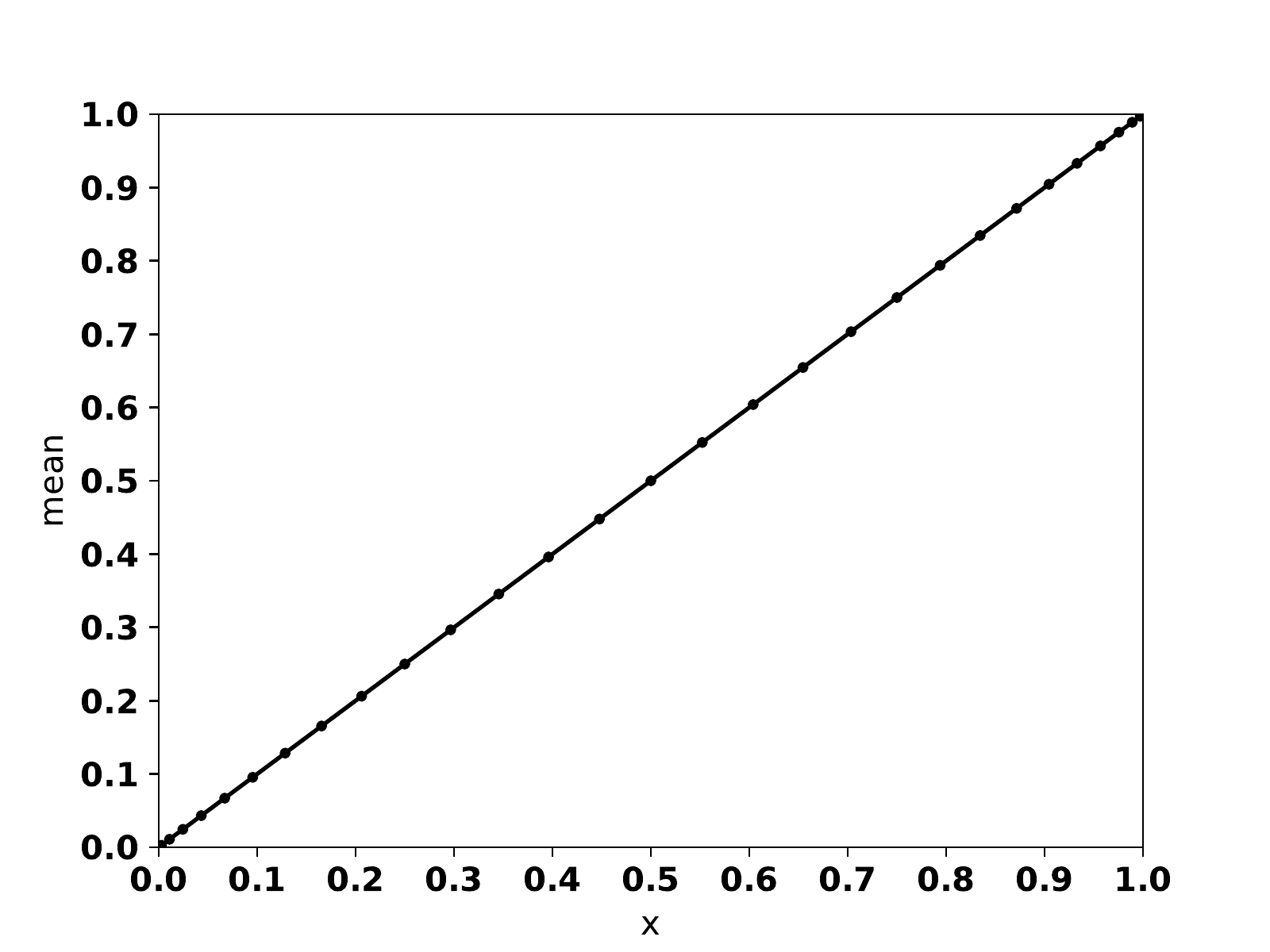}}
		\subfigure[Lagrange $P^1$]{\includegraphics[width=.49\textwidth]{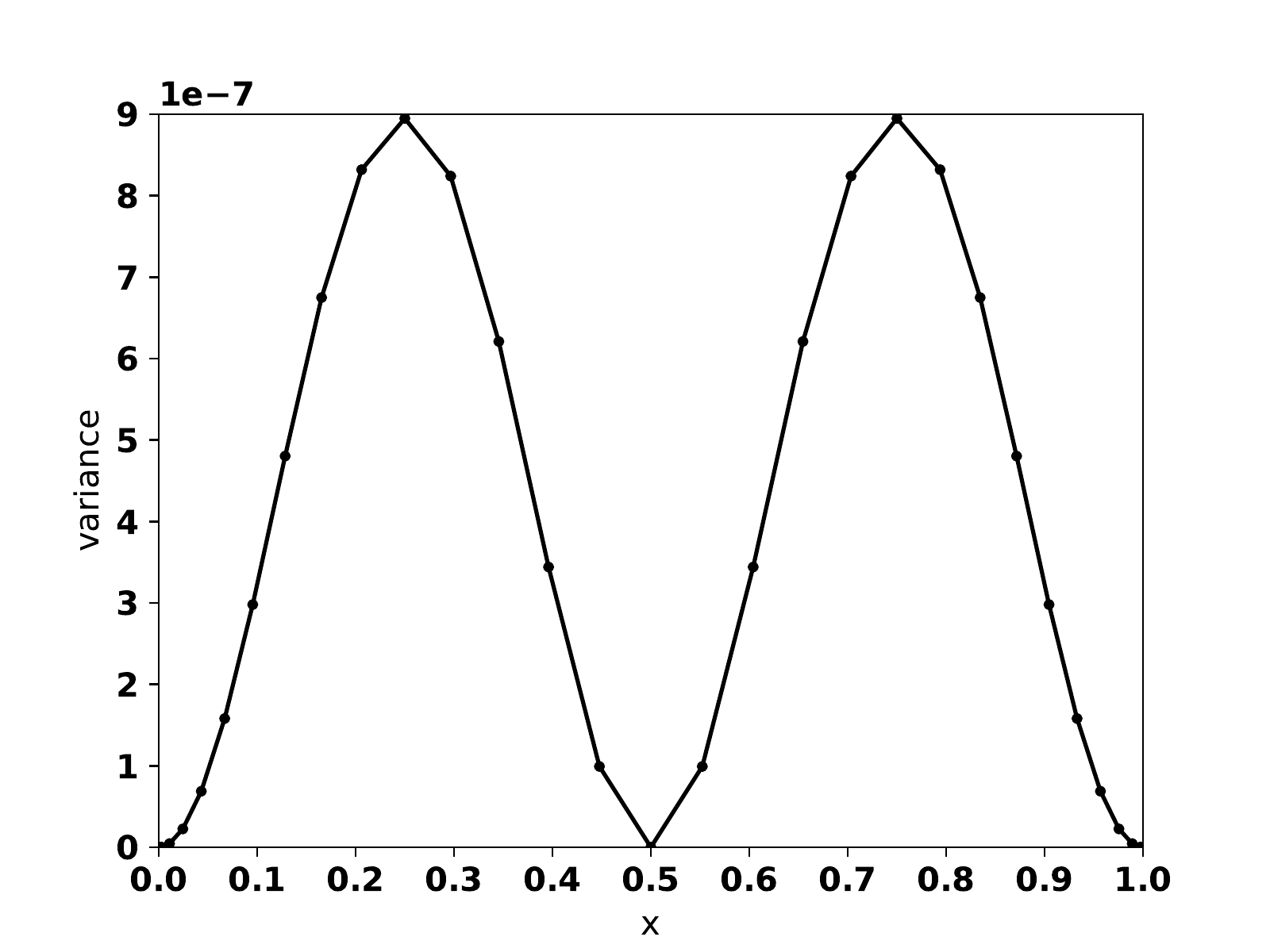}}
		\subfigure[Lagrange $P^2$]{\includegraphics[width=.49\textwidth]{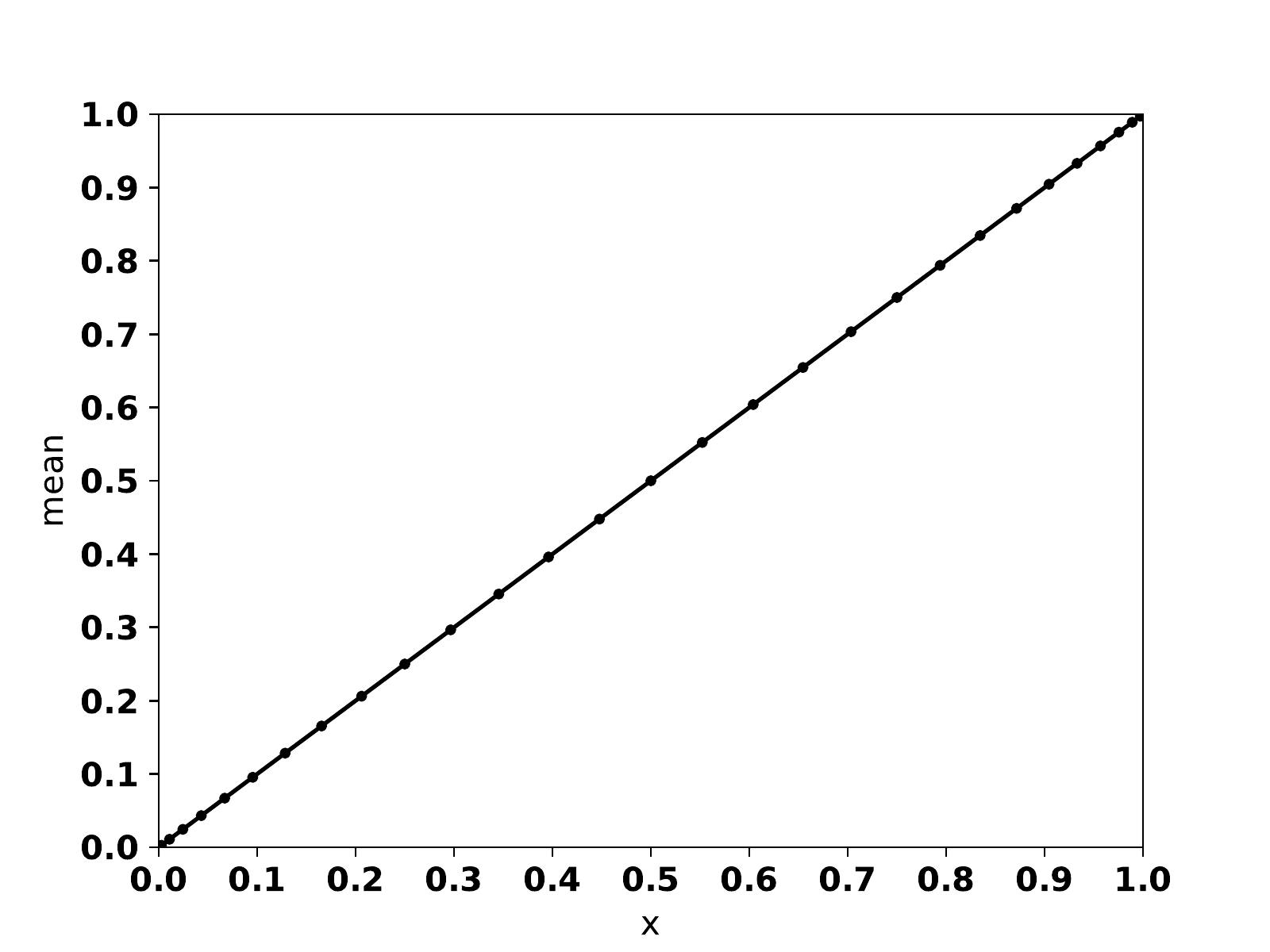}}
		\subfigure[Lagrange $P^2$]{\includegraphics[width=.49\textwidth]{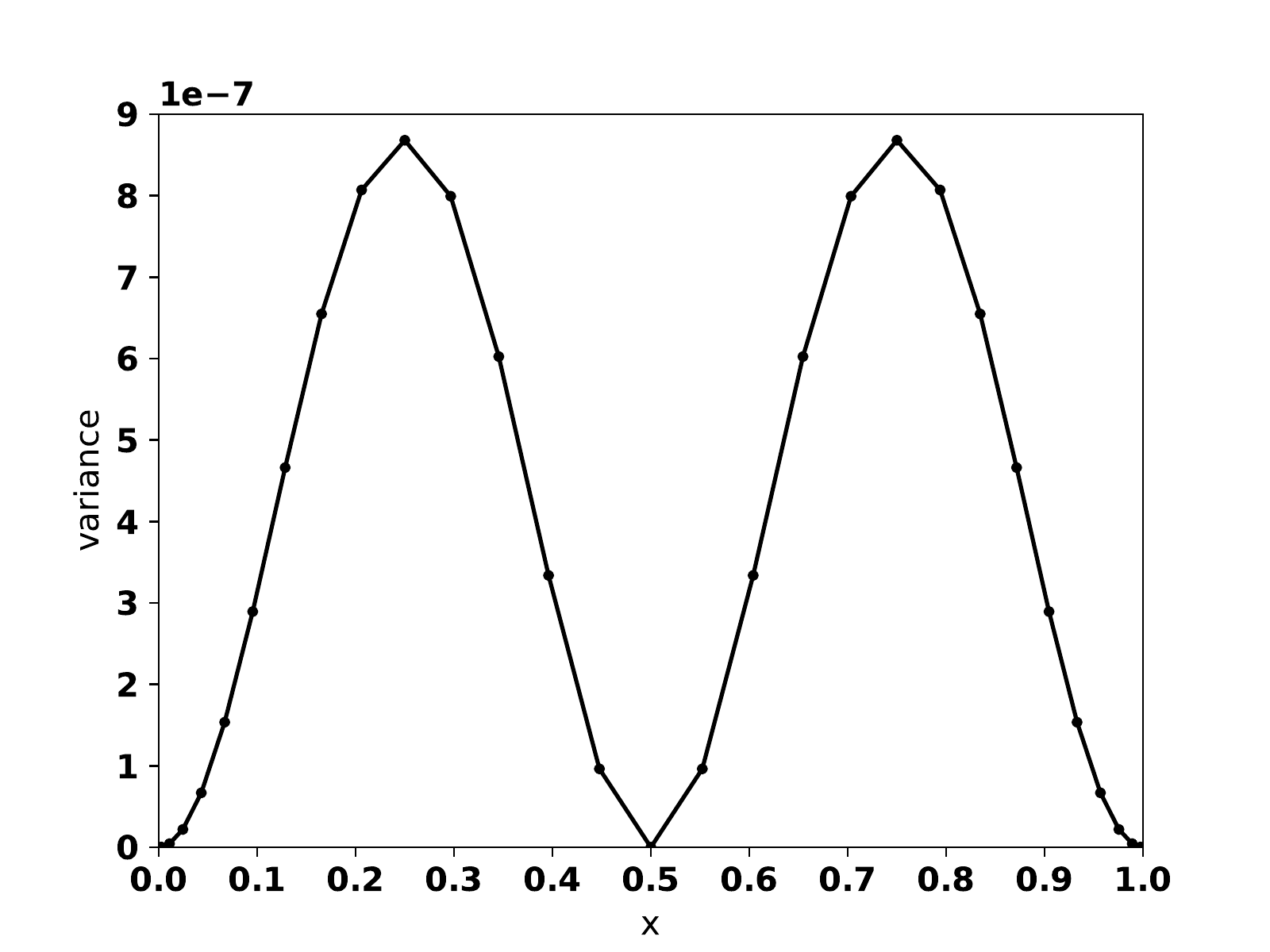}}
		\subfigure[Lagrange $P^3$]{\includegraphics[width=.49\textwidth]{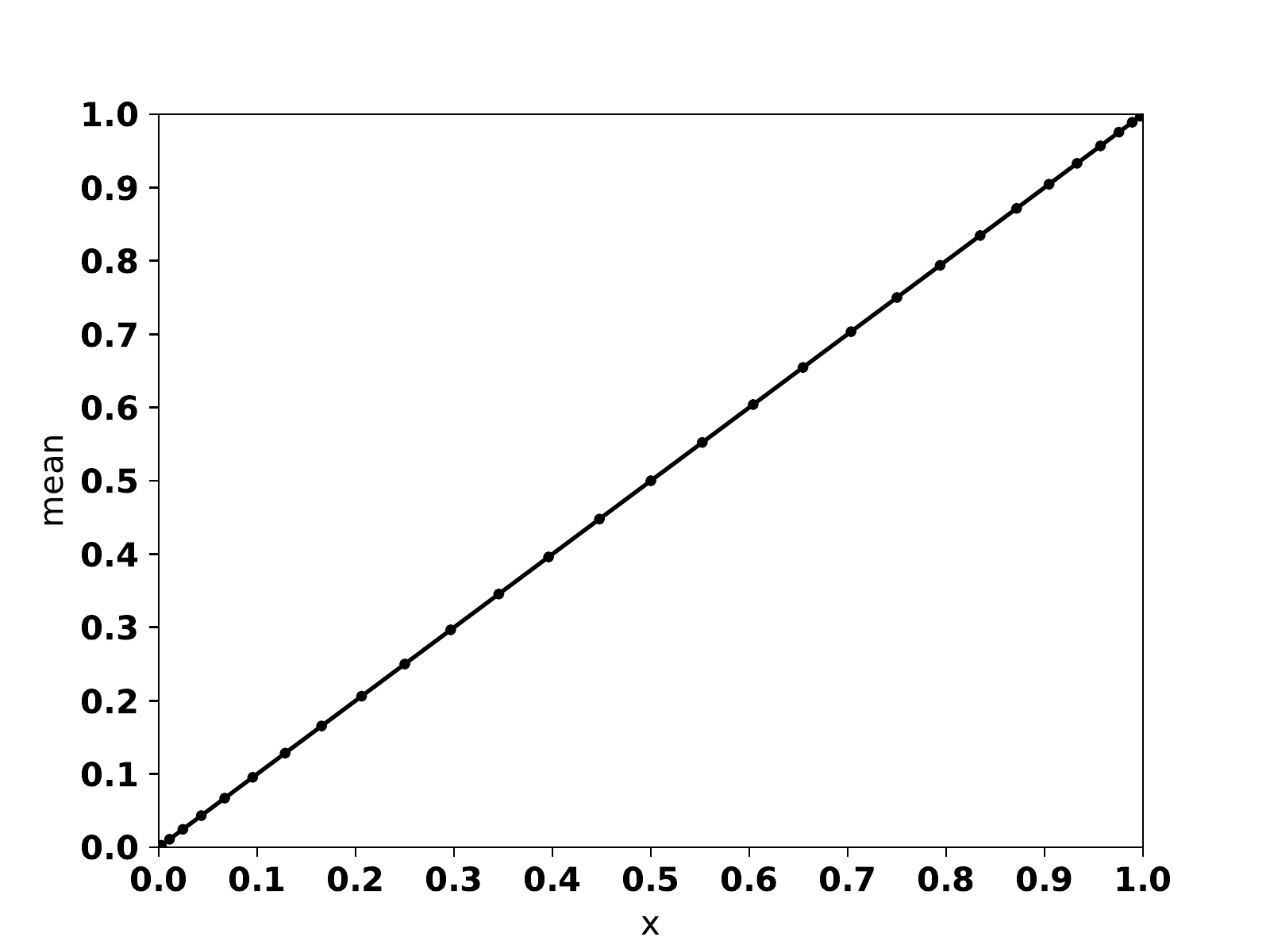}}
		\subfigure[Lagrange $P^3$]{\includegraphics[width=.49\textwidth]{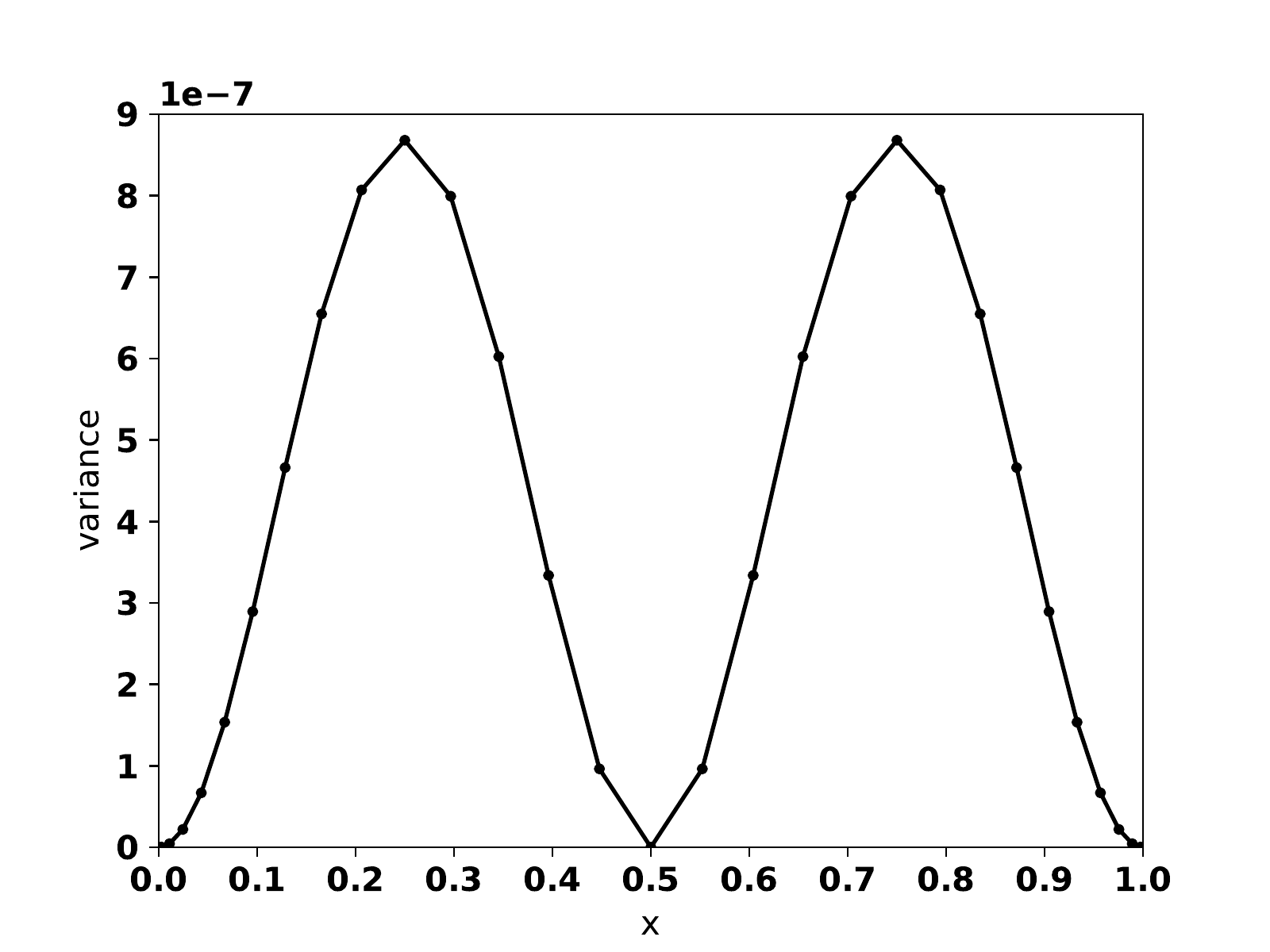}}
	\end{center}
	\caption{Stochastic solution of \eqref{elliptic} with $d=6$ random inputs. Left: mean of solution; right: variance of solution. (a)-(b) Lagrange $P^1$; (c)-(d) Lagrange $P^2$; (e)-(f) Lagrange $P^3$.  
	}
	\label{fig:ellip_m_v_6D}
\end{figure}

\subsubsection{Kraichnan-Orszag (K-O) problem}

We consider the transformed Kraichnan-Orszag three-mode problem
\begin{align}
\label{K-O}
\frac{dy_1}{dt} & = y_1  y_3,  \notag \\
\frac{dy_2}{dt} & = -y_2  y_3, \\
\frac{dy_3}{dt} & = -y_1^2  + y_2^2, \notag
\end{align}
with initial condition
\begin{align*}
y_1(0) = Y_1(0;\omega), \quad y_2(0) = Y_2(0;\omega), \quad y_3(0) = Y_3(0;\omega).
\end{align*}
 This problem presents a bifurcation on the parameter $y_1(0)$ and $y_2(0)$. The deterministic solutions of the problem are periodic and the period goes to infinity if the initial conditions are located at the planes   $y_1=0$ and $y_2=0,$ which means that discontinuity occurs when the initial conditions cross these two planes. The random initial conditions are chosen as the uniform distribution $Y\sim U(-1,1)$. In this setting, the initial conditions cross the discontinuity plane and therefore adaptive sparse grid method is necessary for this problem. 
 This problem has been studied by many researchers \cite{ma2009adaptive, wan2005adaptive, foo2008multi}. In our experiments, a third order Runge-Kutta method with time step $\Delta t = 0.01$ is used for the time integration.  Since the location of discontinuity is fixed in this example, we just apply the refinement step and skip the coarsening step.


First, we consider the simplified case of one-dimensional random input as follows
\begin{align*}
y_1(0) = 1.0, \quad y_2(0) = 0.1Y(0;\omega), \quad y_3(0) = 0.
\end{align*}
In the Hermite bases, we also need the evolving equations for the derivatives
\begin{align*}
\frac{d}{dt} (y_1)_Y & = (y_1)_Y  y_3 + y_1 (y_3)_Y,  \\
\frac{d}{dt} (y_2)_Y& = -(y_2)_Y  y_3 - y_2 (y_3)_Y, \\
\frac{d}{dt} (y_3)_Y & = -2 y_1 (y_1)_Y  + 2 y_2 (y_2)_Y.
\end{align*}
In the left part of Figure \ref{fig:KO_1D_2D}, we present the time evolution of the variance of the solution $(y_1, y_2, y_3)$ during the time interval [0,30] (short time behavior) with Lagrange and Hermite bases. $\varepsilon$ is taken as $ 10^{-4}$ and the maximum mesh level is set to be 10. We observe that all the solutions by our adaptive methods are convergent and they are consistent with those in \cite{ma2009adaptive}. In this 1D case, the results are almost the same. 
The realization of solutions $(y_1, y_2, y_3)$ at $t=1$ and $t=60$ are shown in  Figure \ref{fig:KO_1D_realization} with Lagrange $P^2$ bases.  At earlier time, no discontinuity has been developed. When time increases, the discontinuity becomes stronger and the oscillations are generated in the solutions.

\begin{figure}[htp]
	\begin{center}
		\subfigure[$y_1$]{\includegraphics[width=.49\textwidth]{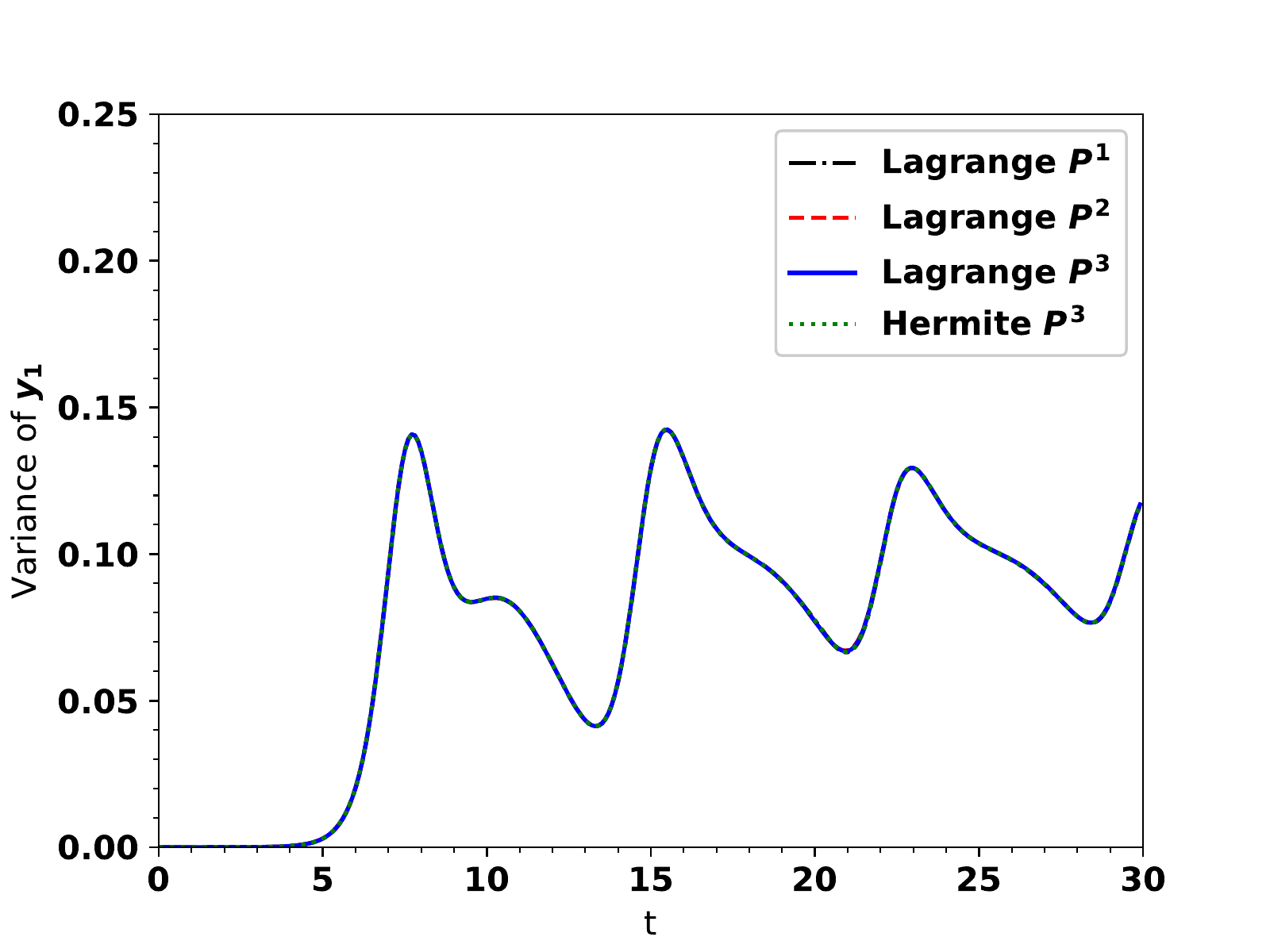}}
		\subfigure[$y_1$]{\includegraphics[width=.49\textwidth]{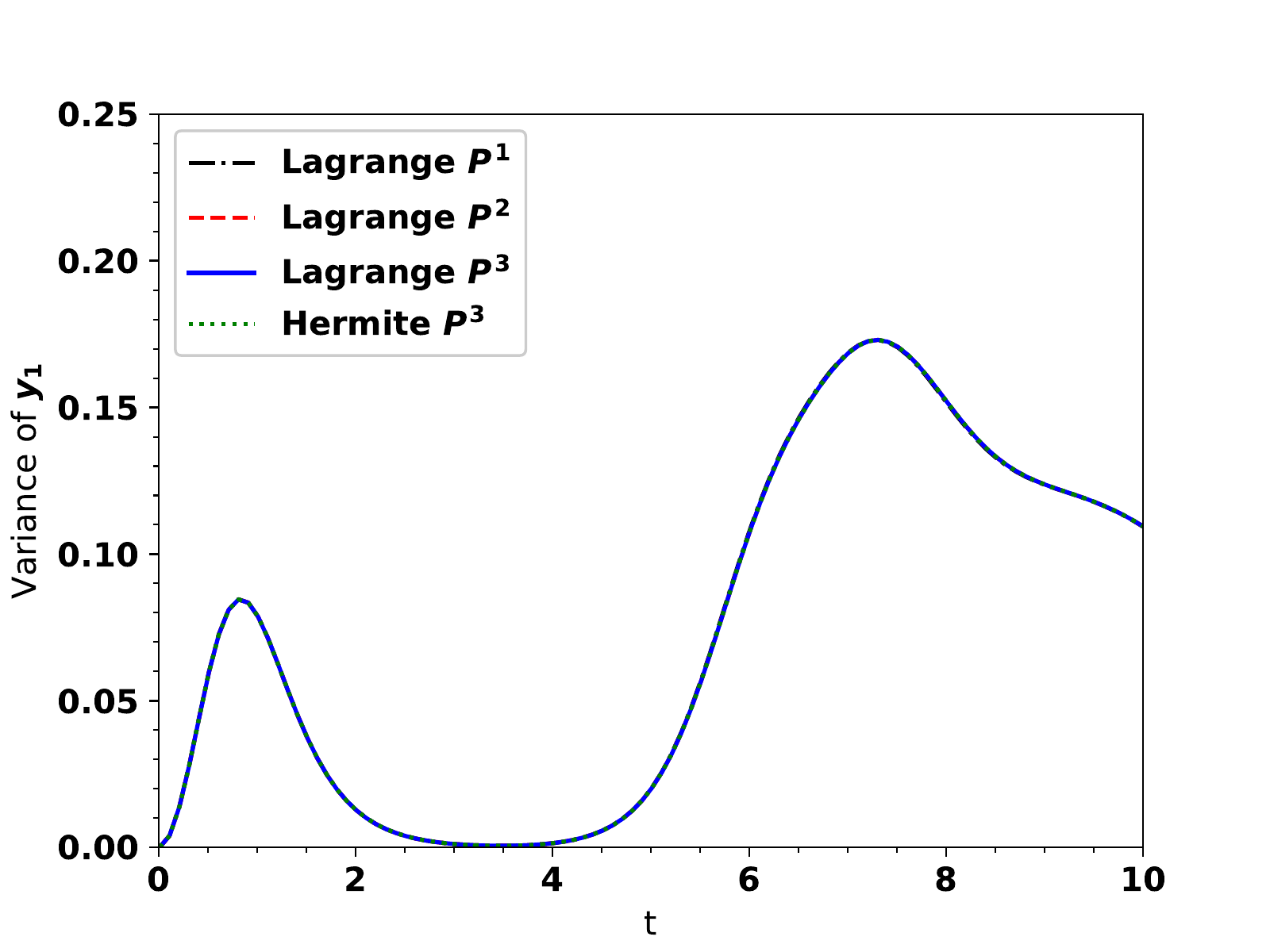}}\\
		\subfigure[$y_2$]{\includegraphics[width=.49\textwidth]{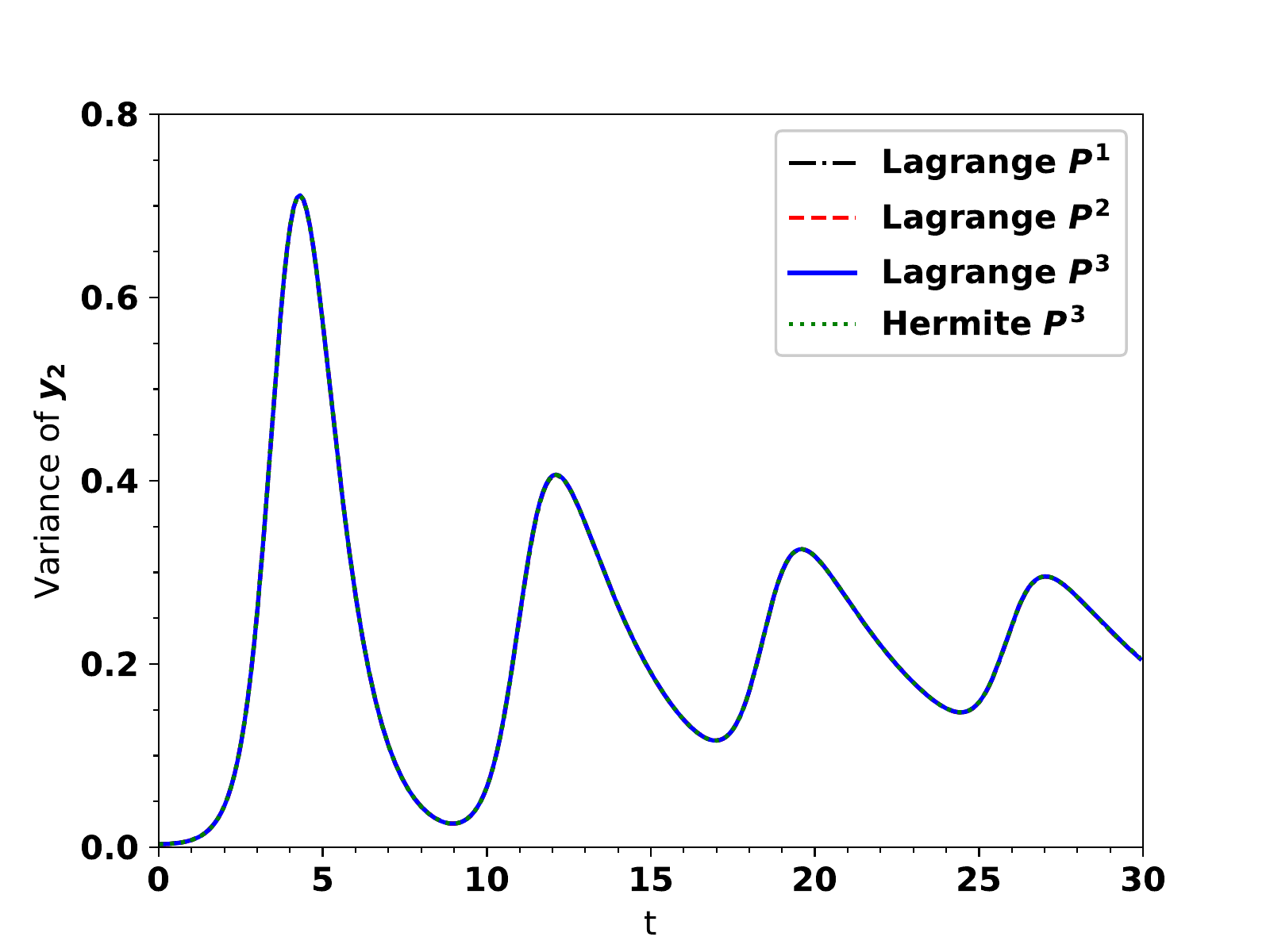}}
		\subfigure[$y_2$]{\includegraphics[width=.49\textwidth]{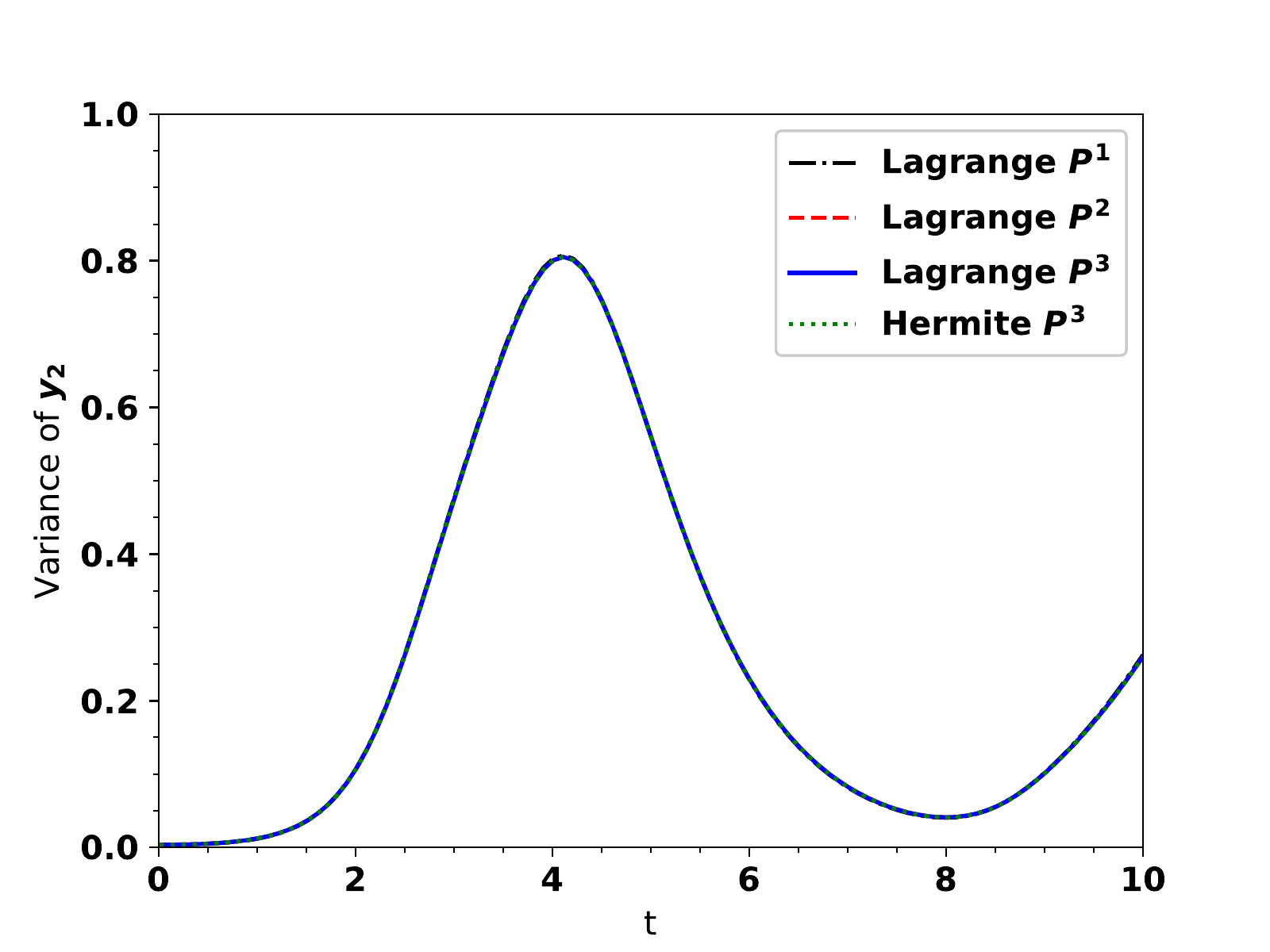}}\\
		\subfigure[$y_3$]{\includegraphics[width=.49\textwidth]{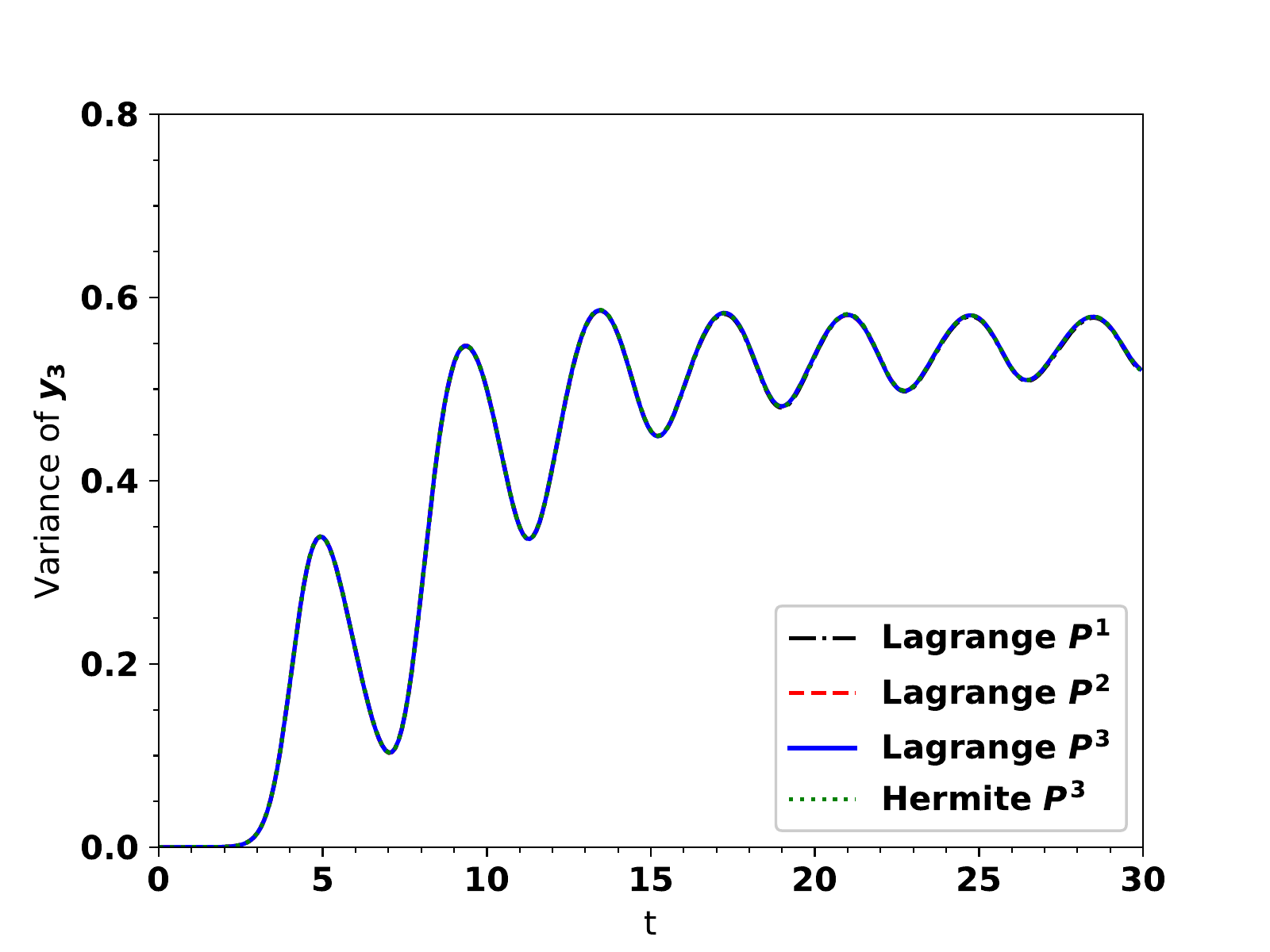}}
		\subfigure[$y_3$]{\includegraphics[width=.49\textwidth]{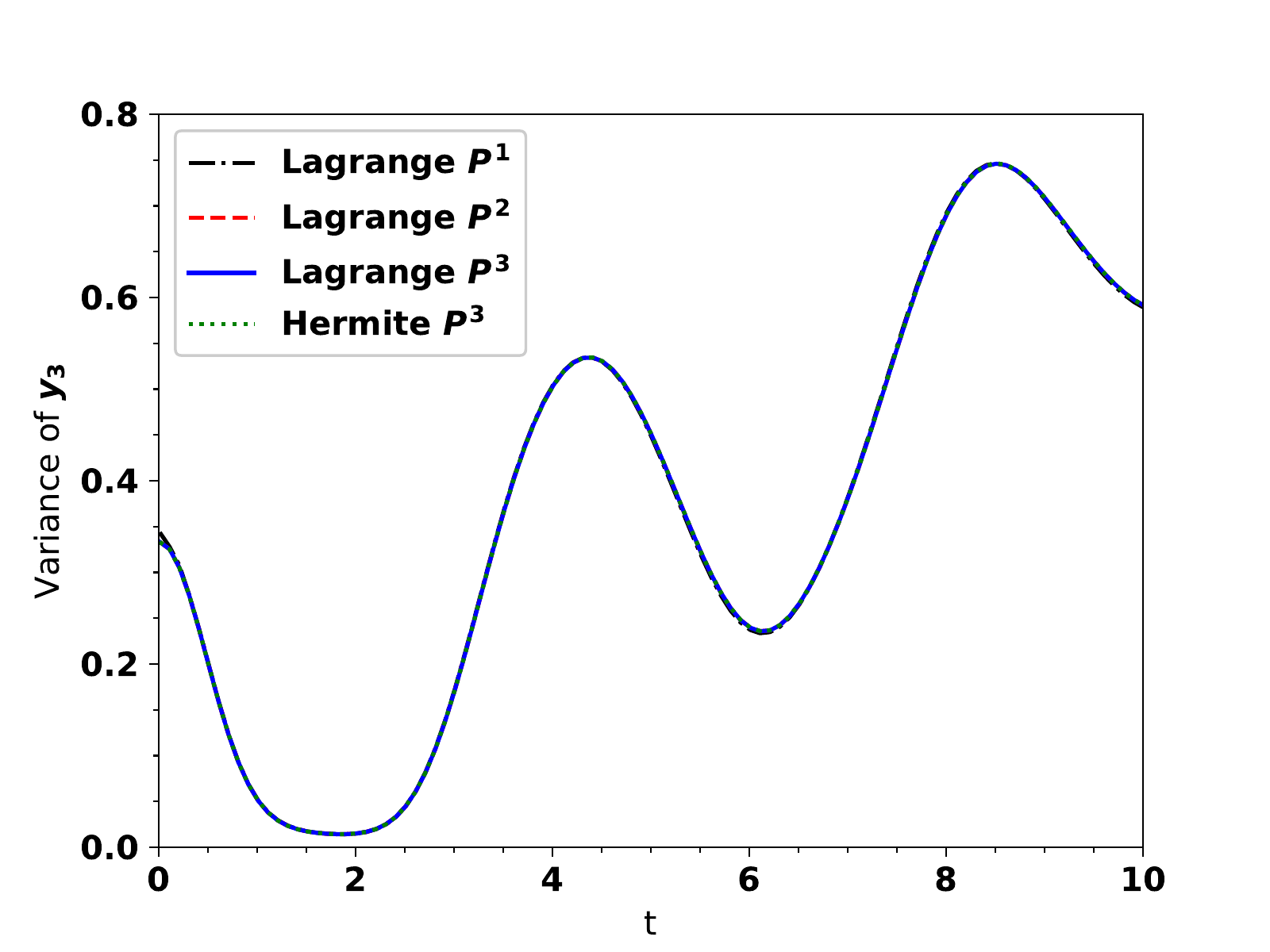}}
	\end{center}
	\caption{Time evolution of the variance of the solution in one and two-dimensional random inputs for K-O problem with adaptive sparse grid method.   $\varepsilon = 10^{-4}$. Left: one-dimensional case; right: two-dimensional case; top: $y_1$; middle: $y_2$; bottom: $y_3$.
	}
	\label{fig:KO_1D_2D}
\end{figure}


\begin{figure}[htp]
	\begin{center}
		\subfigure[$y_1, t=1$]{\includegraphics[width=.49\textwidth]{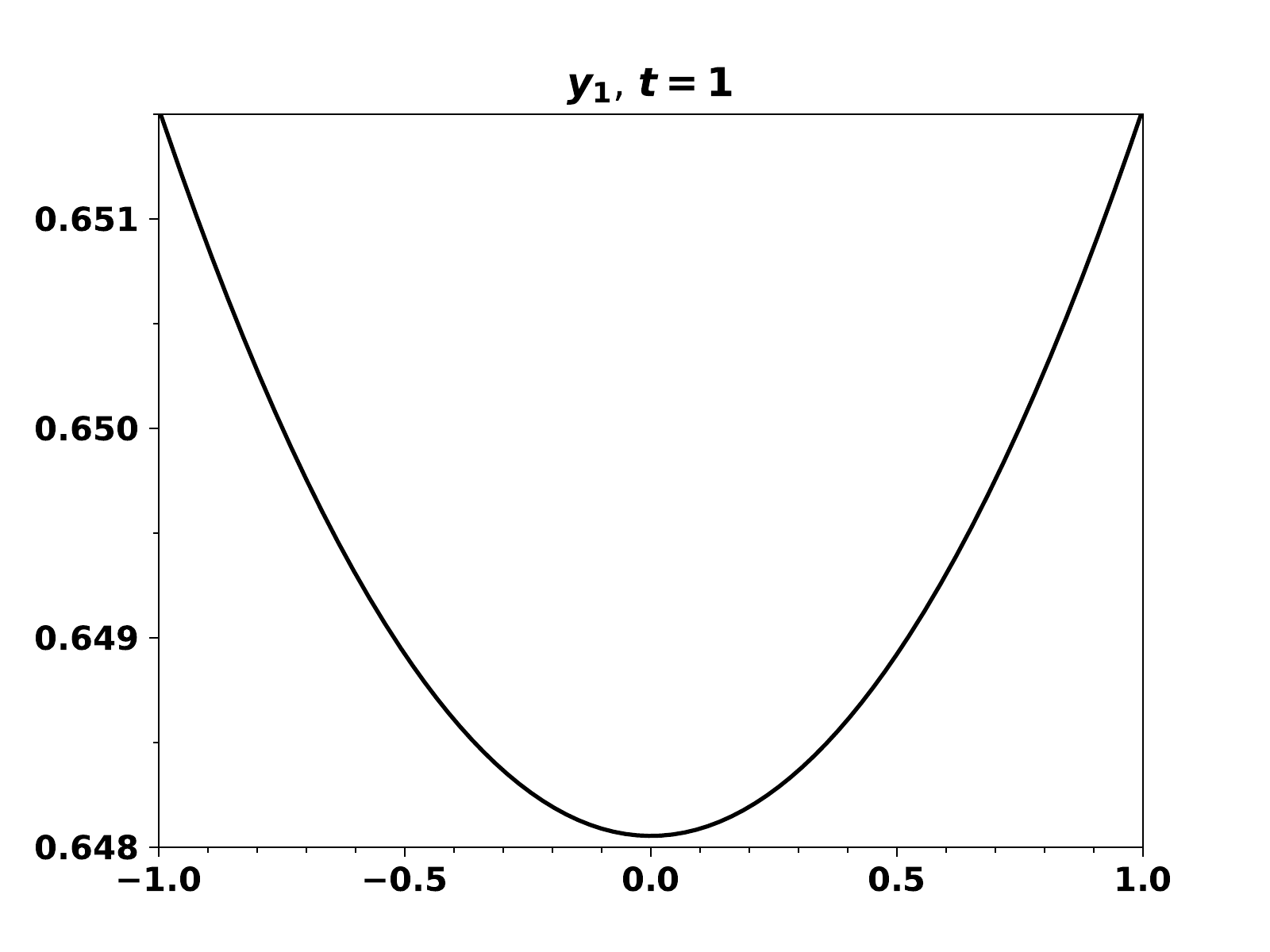}}
		\subfigure[$y_1,t=60$]{\includegraphics[width=.49\textwidth]{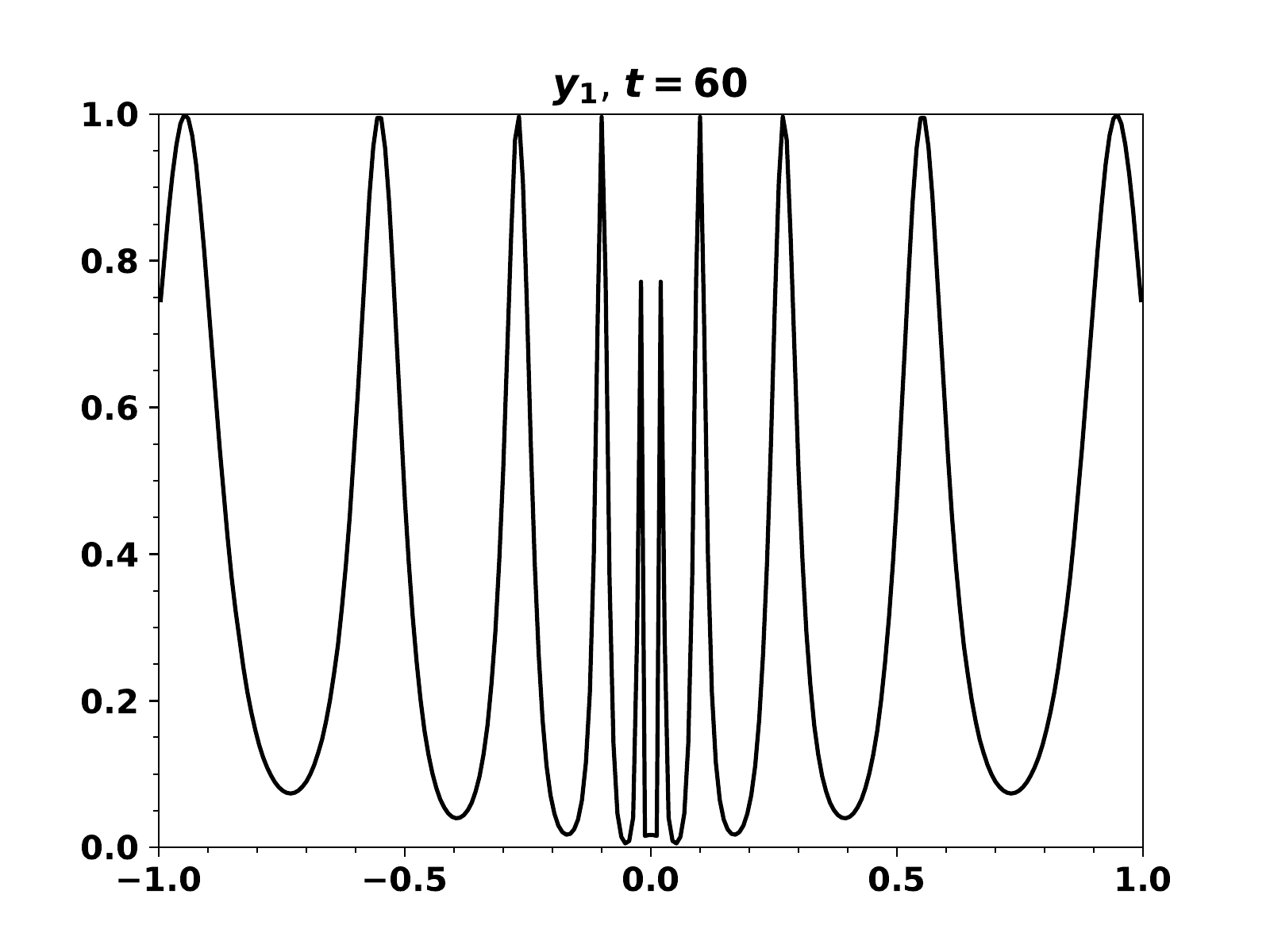}}\\
		\subfigure[$y_2, t=1$]{\includegraphics[width=.49\textwidth]{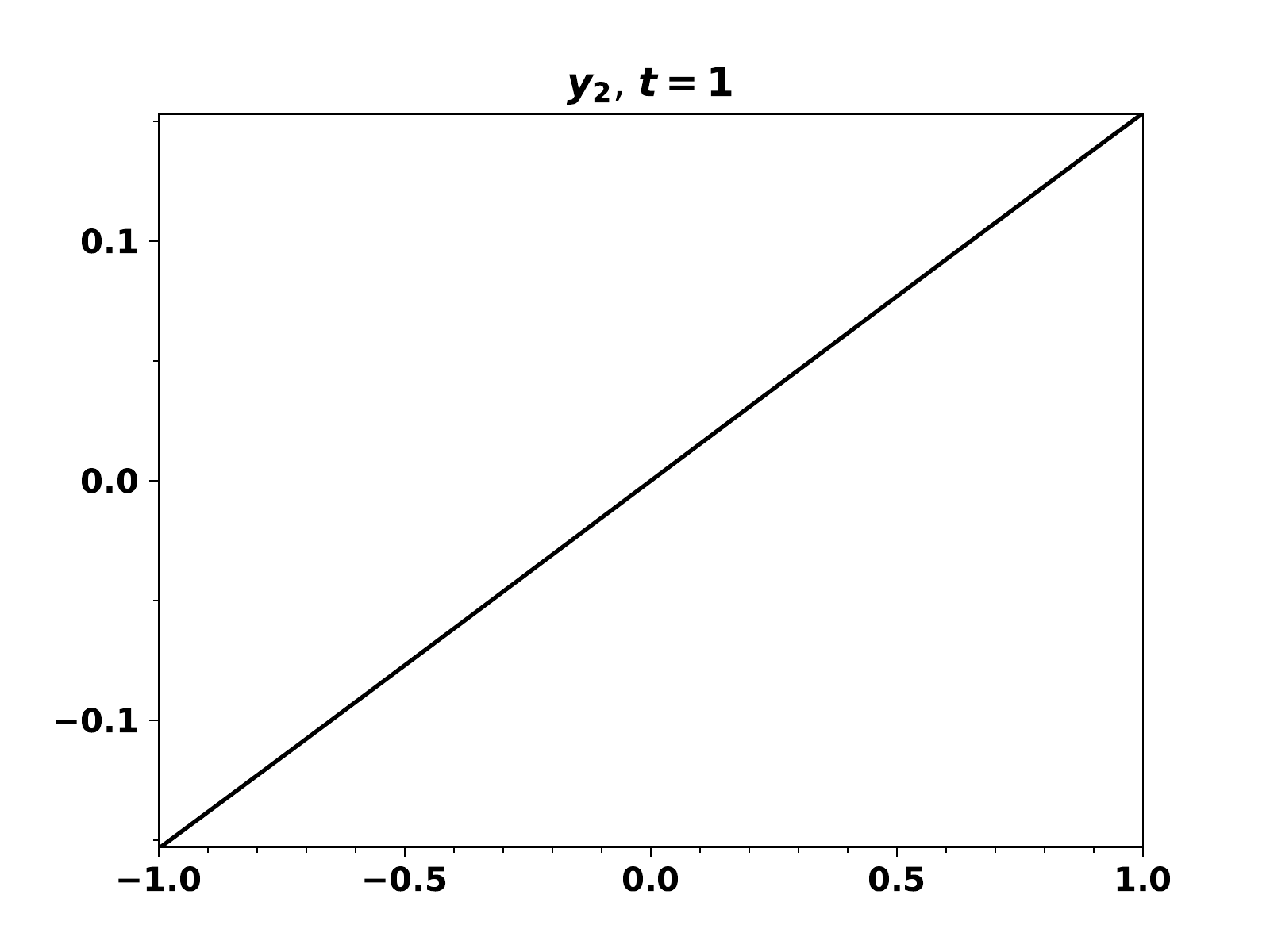}}
		\subfigure[$y_2,t=60$]{\includegraphics[width=.49\textwidth]{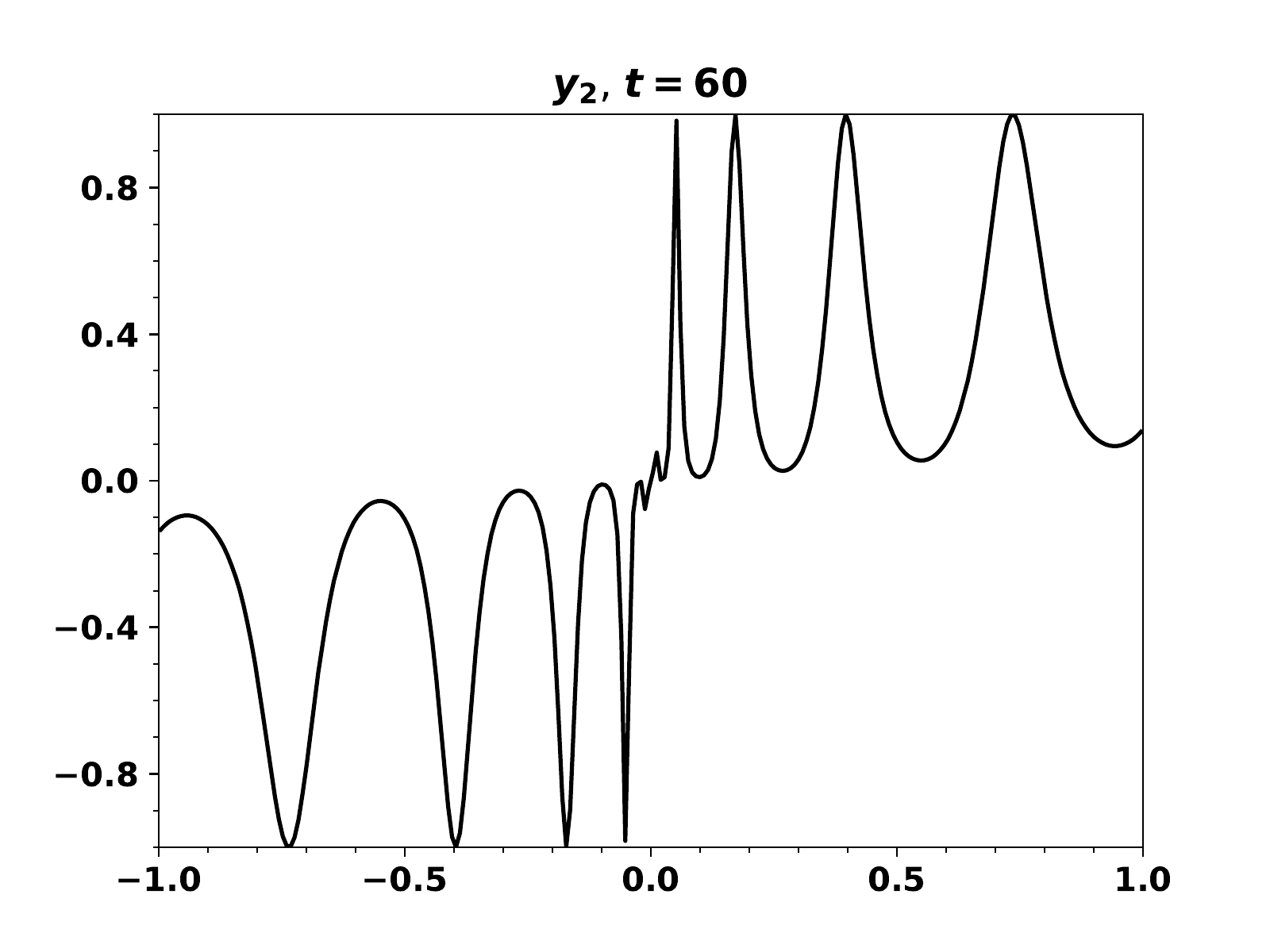}}\\
		\subfigure[$y_3, t=1$]{\includegraphics[width=.49\textwidth]{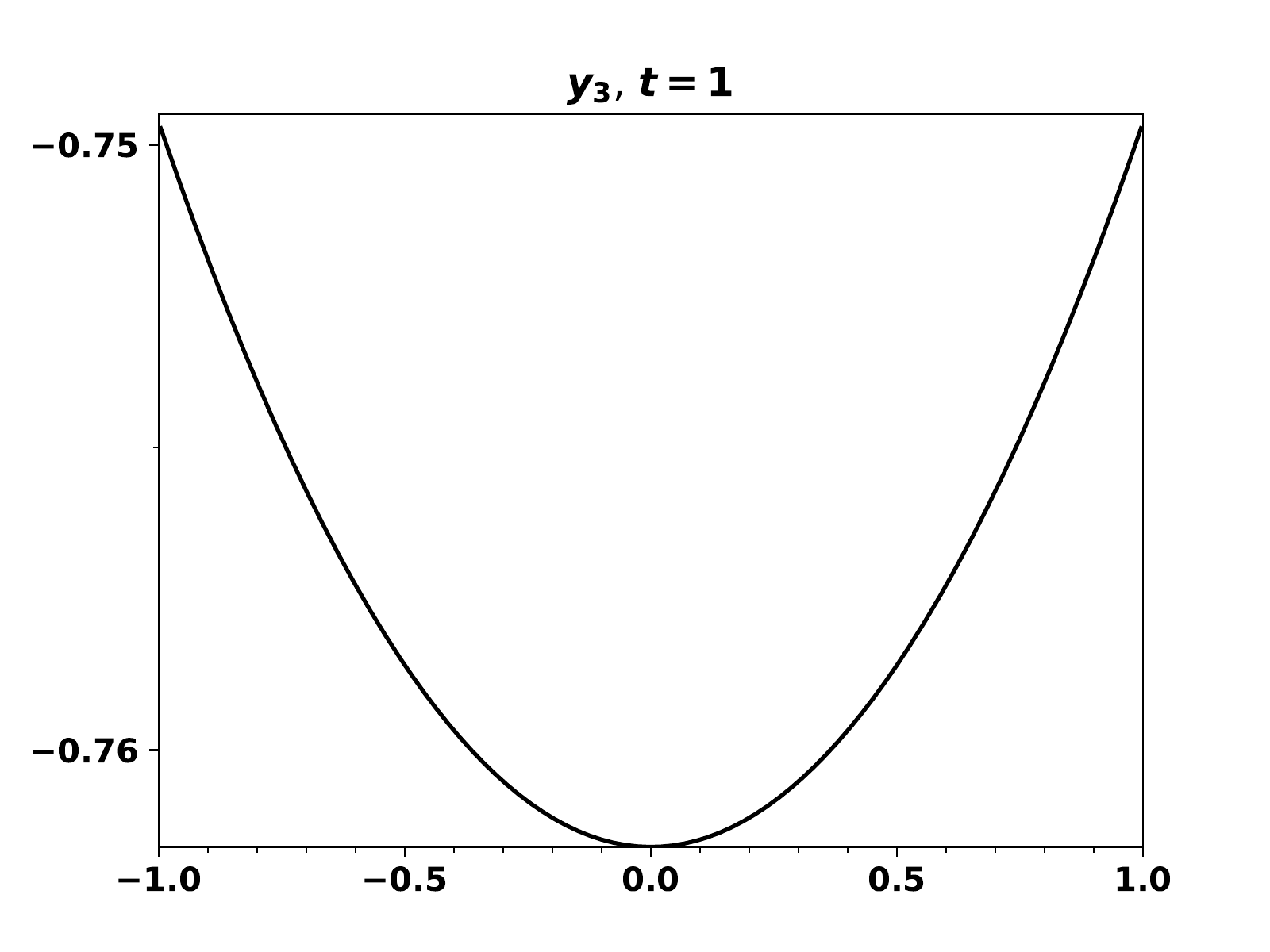}}
		\subfigure[$y_3,t=60$]{\includegraphics[width=.49\textwidth]{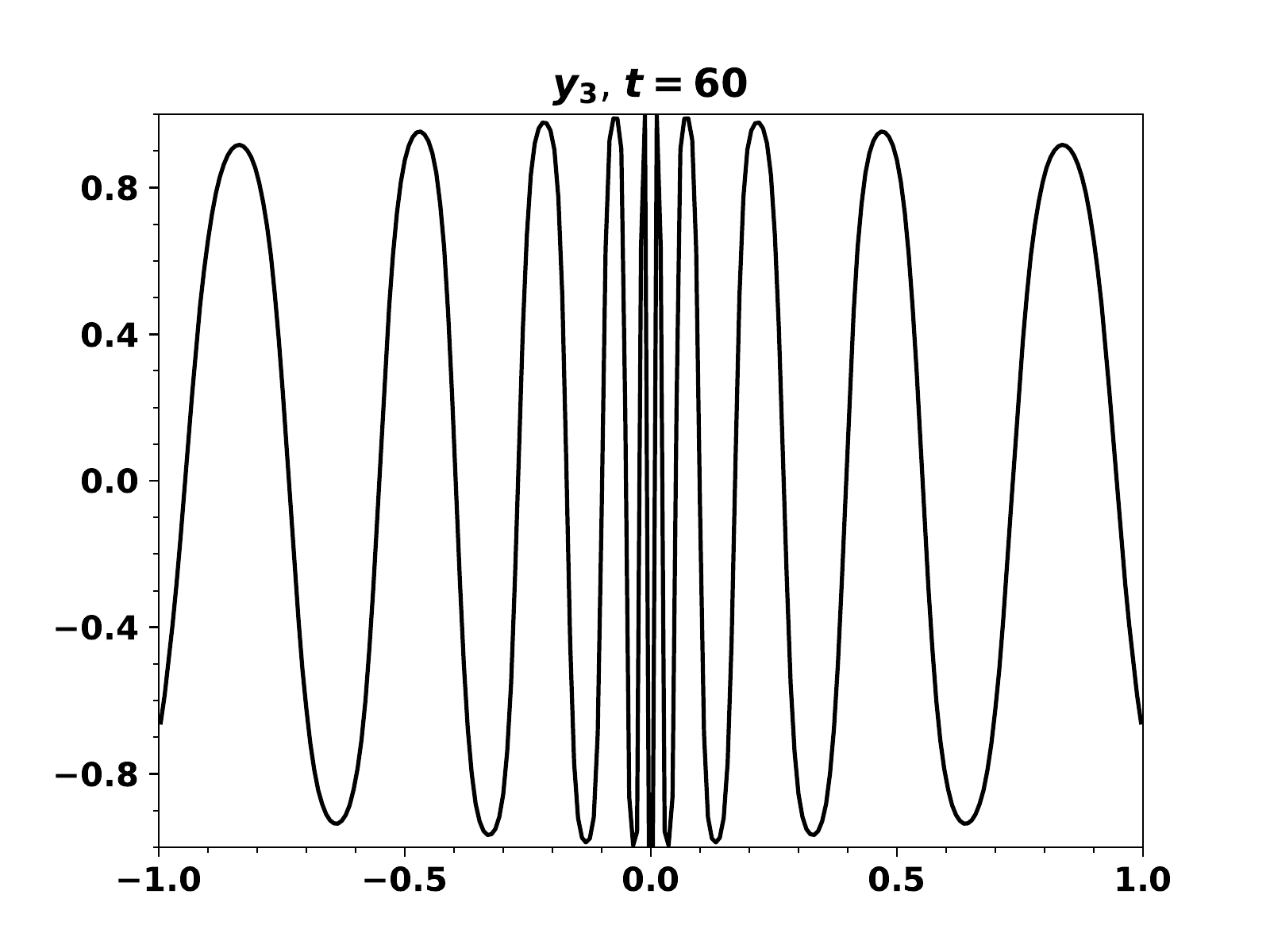}}
%
	\end{center}
	\caption{Realization of the solution ($y_1, y_2, y_3$) in one-dimensional random inputs for K-O problem with adaptive sparse grid method. Top: $y_1$; middle: $y_2$; bottom: $y_3$.   Lagrange $P^2,$ $\varepsilon = 10^{-4}$.
	}
	\label{fig:KO_1D_realization}
\end{figure}

For two-dimensional case, the random initial conditions are chosen as
\begin{align*}
y_1(0) = 1.0, \quad y_2(0) = 0.1Y_1(0;\omega), \quad y_3(0) = Y_2(0;\omega).
\end{align*}
In the Hermite bases, we also need the evolving equations for the derivatives
\begin{align*}
&\frac{d}{dt} (y_1)_{Y_1}  = (y_1)_{Y_1}  y_3 + y_1 (y_3)_{Y_1}, \quad \frac{d}{dt} (y_1)_{Y_2} = (y_1)_{Y_2}  y_3 + y_1 (y_3)_{Y_2}, \\
&\frac{d}{dt} (y_1)_{Y_1 Y_2}  = (y_1)_{Y_1 Y_2}  y_3 + y_1 (y_3)_{Y_1 Y_2} + (y_1)_{Y_1}  (y_3)_{Y_2} + (y_1)_{Y_2} (y_3)_{Y_1}, \\
&\frac{d}{dt} (y_2)_{Y_1} = -(y_2)_{Y_1}  y_3 - y_2 (y_3)_{Y_1}, \quad \frac{d}{dt} (y_2)_{Y_2} = -(y_2)_{Y_2}  y_3 - y_2 (y_3)_{Y_2}, \\
&\frac{d}{dt} (y_2)_{Y_1 Y_2}  = -(y_2)_{Y_1 Y_2}  y_3 - y_2 (y_3)_{Y_1 Y_2} - (y_2)_{Y_1}  (y_3)_{Y_2} - (y_2)_{Y_2} (y_3)_{Y_1}, \\
&\frac{d}{dt} (y_3)_{Y_1}  = -2 y_1 (y_1)_{Y_1}  + 2 y_2 (y_2)_{Y_1}, \quad \frac{d}{dt} (y_3)_{Y_2} = -2 y_1 (y_1)_{Y_2}  + 2 y_2 (y_2)_{Y_2},\\
&\frac{d}{dt} (y_3)_{Y_1 Y_2}  = -2 \left[ (y_1)_{Y_1} (y_1)_{Y_2}  + y_1 (y_1)_{Y_1 Y_2} \right] + 2 \left[ (y_2)_{Y_1} (y_2)_{Y_2}  + y_2 (y_2)_{Y_1 Y_2} \right].
\end{align*}
In the right part of  Figure \ref{fig:KO_1D_2D},  we present the evolution of the variance of the solutions $(y_1, y_2, y_3)$ over time interval [0,10] for Lagrange and Hermite bases.  The adaptive grids at $t=10$ with various bases are shown in Figure \ref{fig:KO_2D_grid}.  $\varepsilon$ is taken as $ 10^{-4}$ and the maximum mesh level is set to be 10.  In this 2D case, the results of variance are almost the same. The variances by our adaptive method converge and are consistent with those in \cite{ma2009adaptive}.
In this case, the discontinuity region is a line. It is noted that more points are placed around the line $Y_1=0$ which the discontinuity crosses. From Figure \ref{fig:KO_2D_grid}, it is concluded that the Hermite method has the DOF most concentrated towards singularity, and the Lagrange $P^3$ method is the most efficient. This is consistent with the function interpolation result.


\begin{figure}[htp]
	\begin{center}
		\subfigure [Lagrange $P^1$ (12907 DoF)] {\includegraphics[width=.49\textwidth]{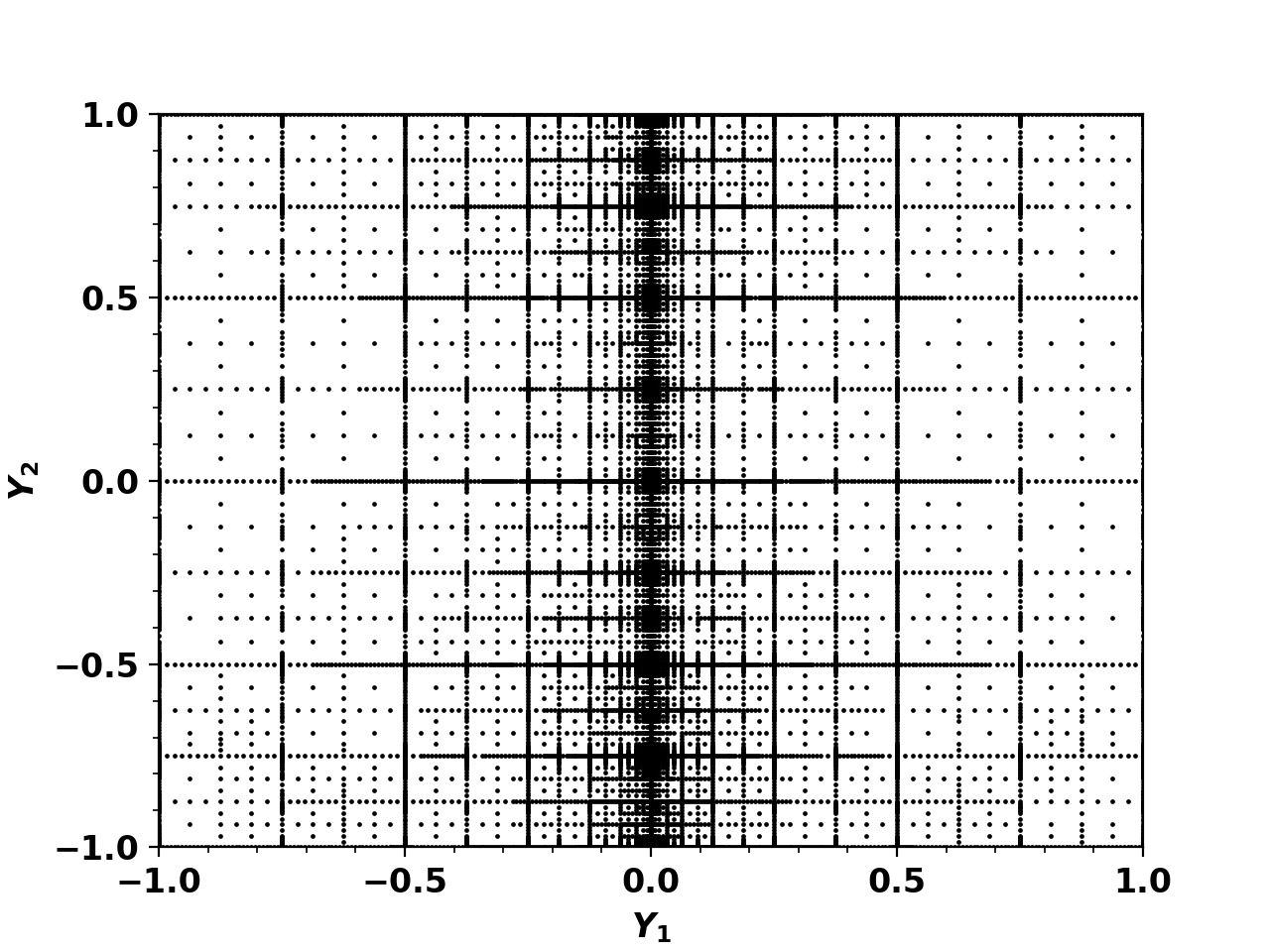}}
		\subfigure [Lagrange $P^2$ (12925 DoF)] {\includegraphics[width=.49\textwidth]{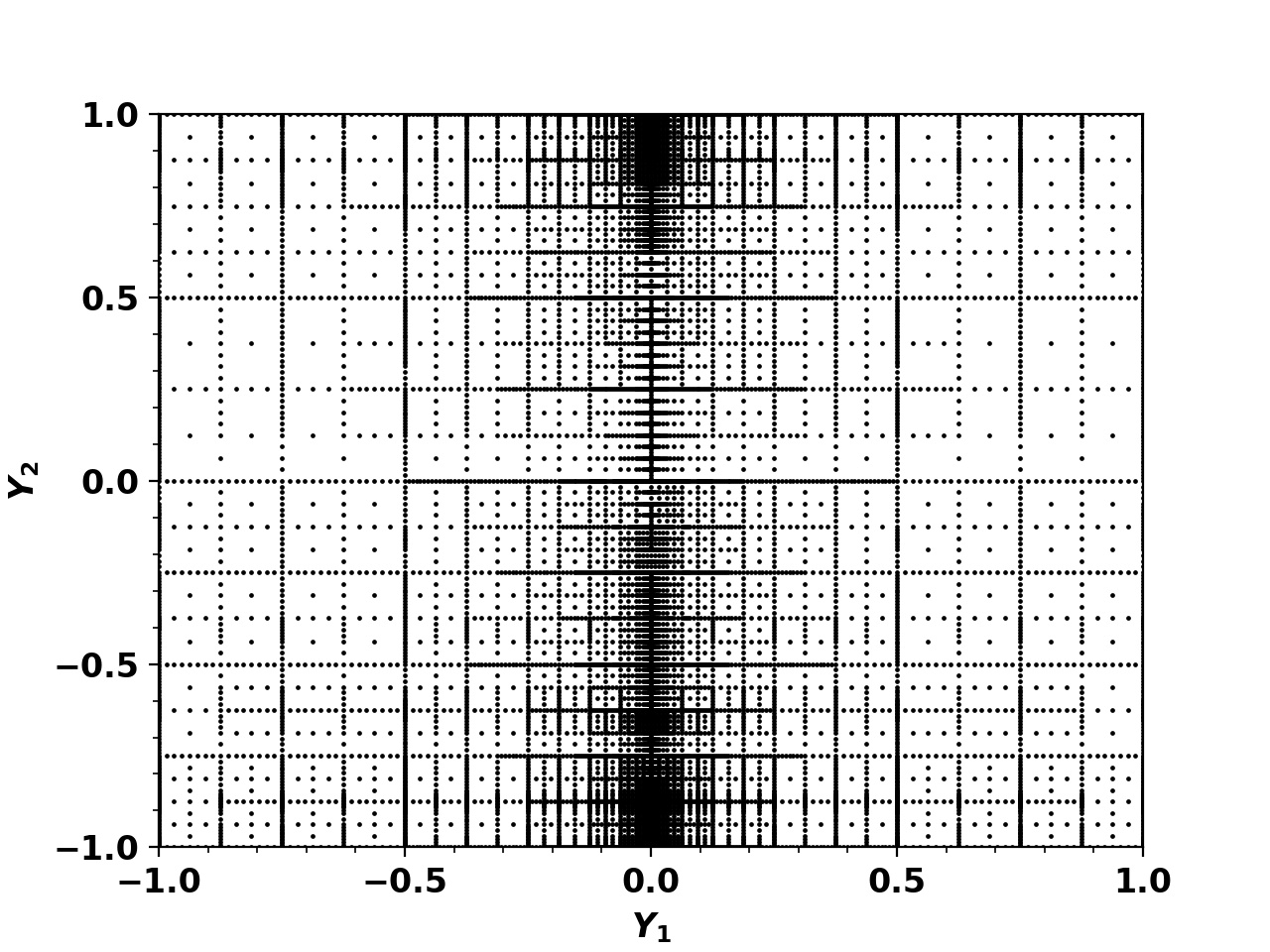}}\\
		\subfigure [Lagrange $P^3$ (8032 DoF)] {\includegraphics[width=.49\textwidth]{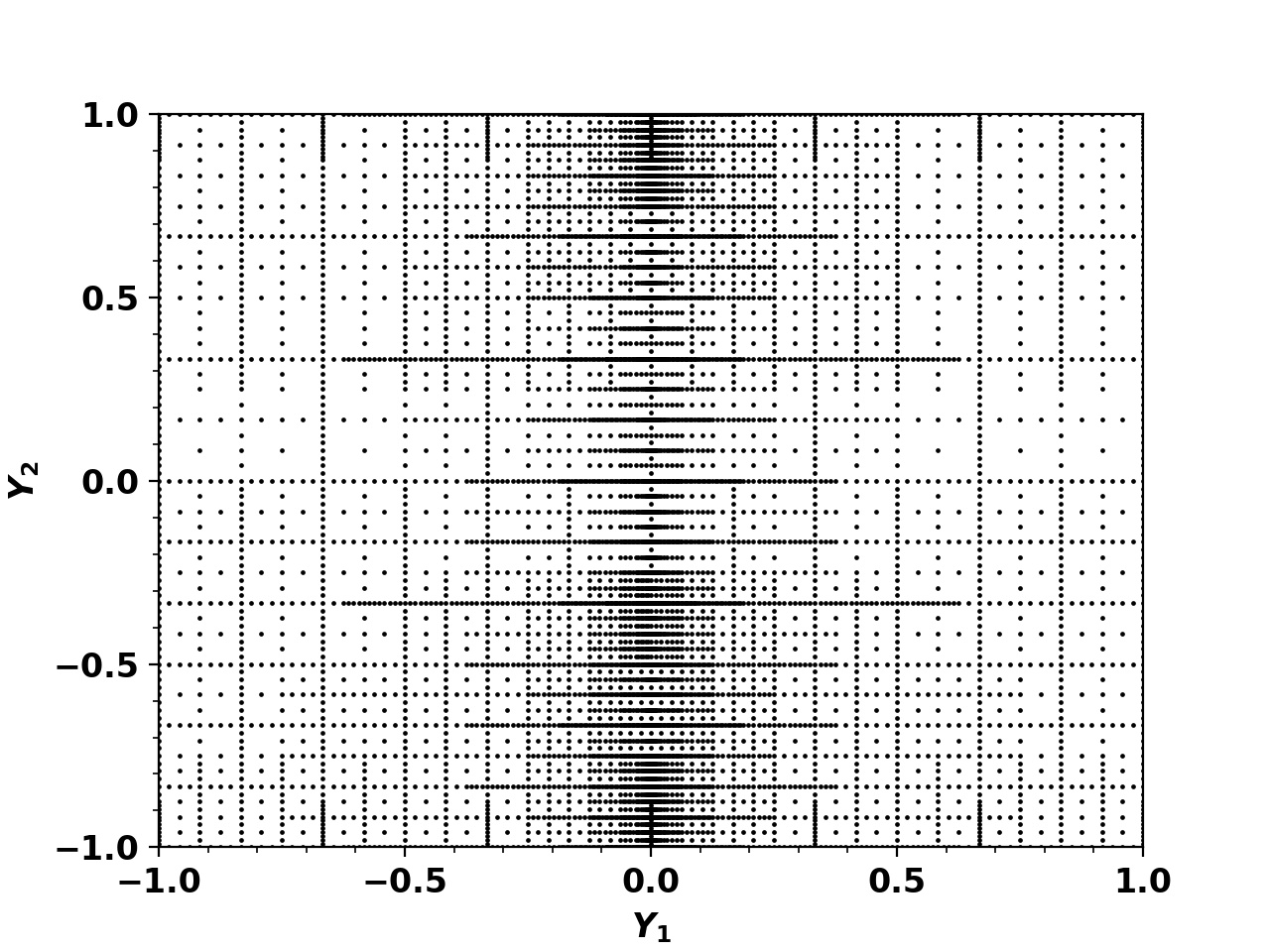}}
		\subfigure [Hermite $P^3$ (22928 DoF)] {\includegraphics[width=.49\textwidth]{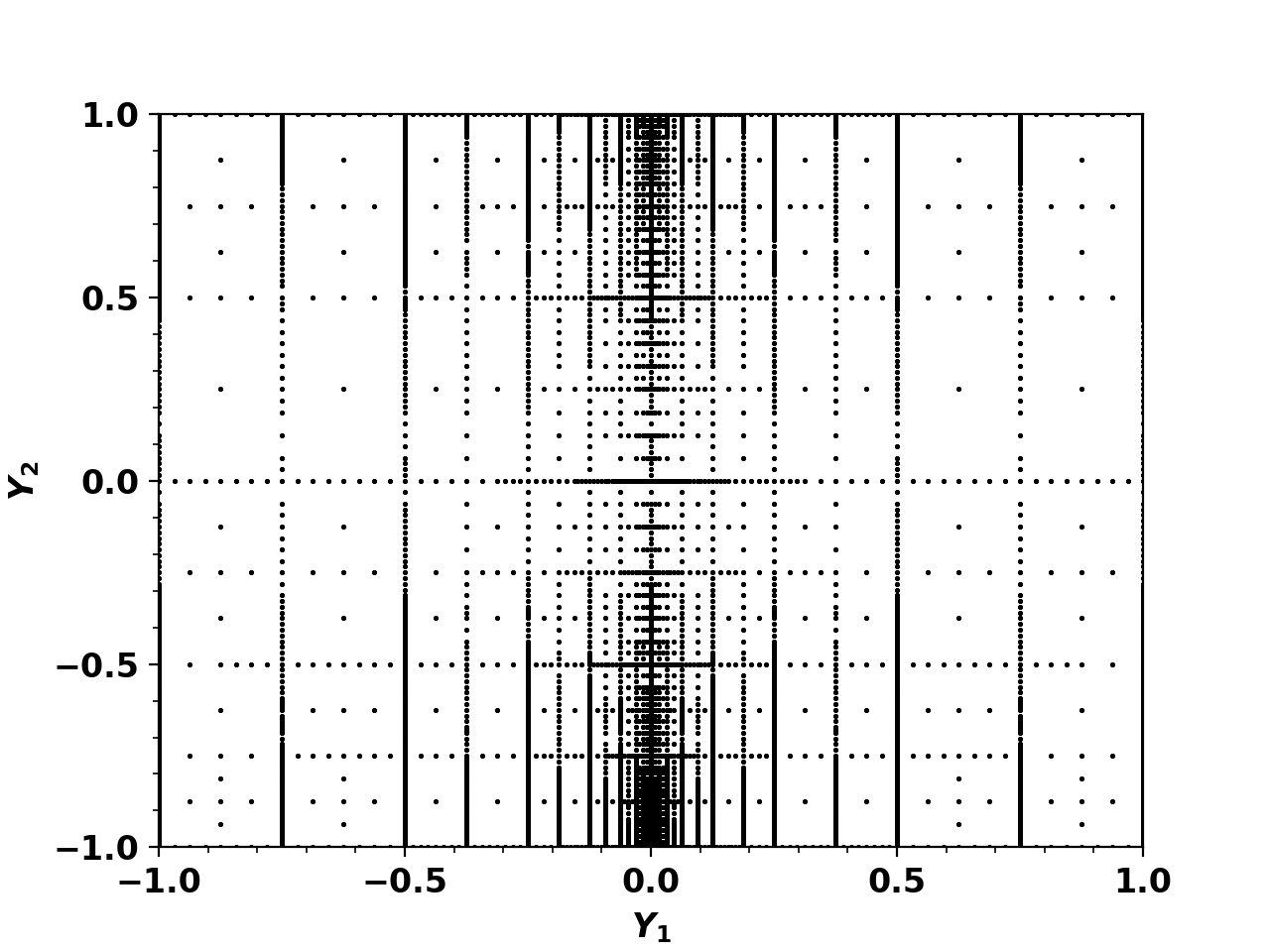}}\\
	\end{center}
	\caption{The adaptive grids at $t=10$ in two-dimensional random inputs for K-O problem.  $\varepsilon = 10^{-4}$. (a) Lagrange $P^1$; (b) Lagrange $P^2$; (c) Lagrange $P^3$; (d) Hermite $P^3$.  
	}
	\label{fig:KO_2D_grid}
\end{figure}

For three-dimensional case, the following random initial conditions are used
\begin{align*}
y_1(0) = Y_1(0;\omega), \quad y_2(0) = Y_2(0;\omega), \quad y_3(0) = Y_3(0;\omega).
\end{align*}

Because of strong discontinuity (which consists of the planes $Y_1=0$ and $Y_2=0$) and higher dimension, this case is more difficult than previous low dimensional case. In Figure \ref{fig:KO_3D}, we show the  evolution of the variance of the solution $(y_1, y_3)$ because of the symmetry of $y_1$  and $y_2$ (the figures of them are the same). When the dimension increases, more and more derivatives are involved which makes the Hermite schemes much more complicated than the Lagrange counterparts, so we only show the results obtained by the  Lagrange bases.   $\varepsilon$ is taken as $10^{-5}$ for $P^1$ and $10^{-4}$ for $P^2$ and $P^3$.  The maximum mesh level is set to be 7. From results of this 3D case, we can observe that the variance with $P^2$ or $P^3$ coincide, while  $P^1$ results demonstrate some discrepancy when $t\ge4.$ This demonstrates that high order bases outperform lower order ones in this 3D case.
The results we have are comparable with \cite{ma2009adaptive}.  

\begin{figure}[htp]
	\begin{center}
		\subfigure[$y_1$]{\includegraphics[width=.49\textwidth]{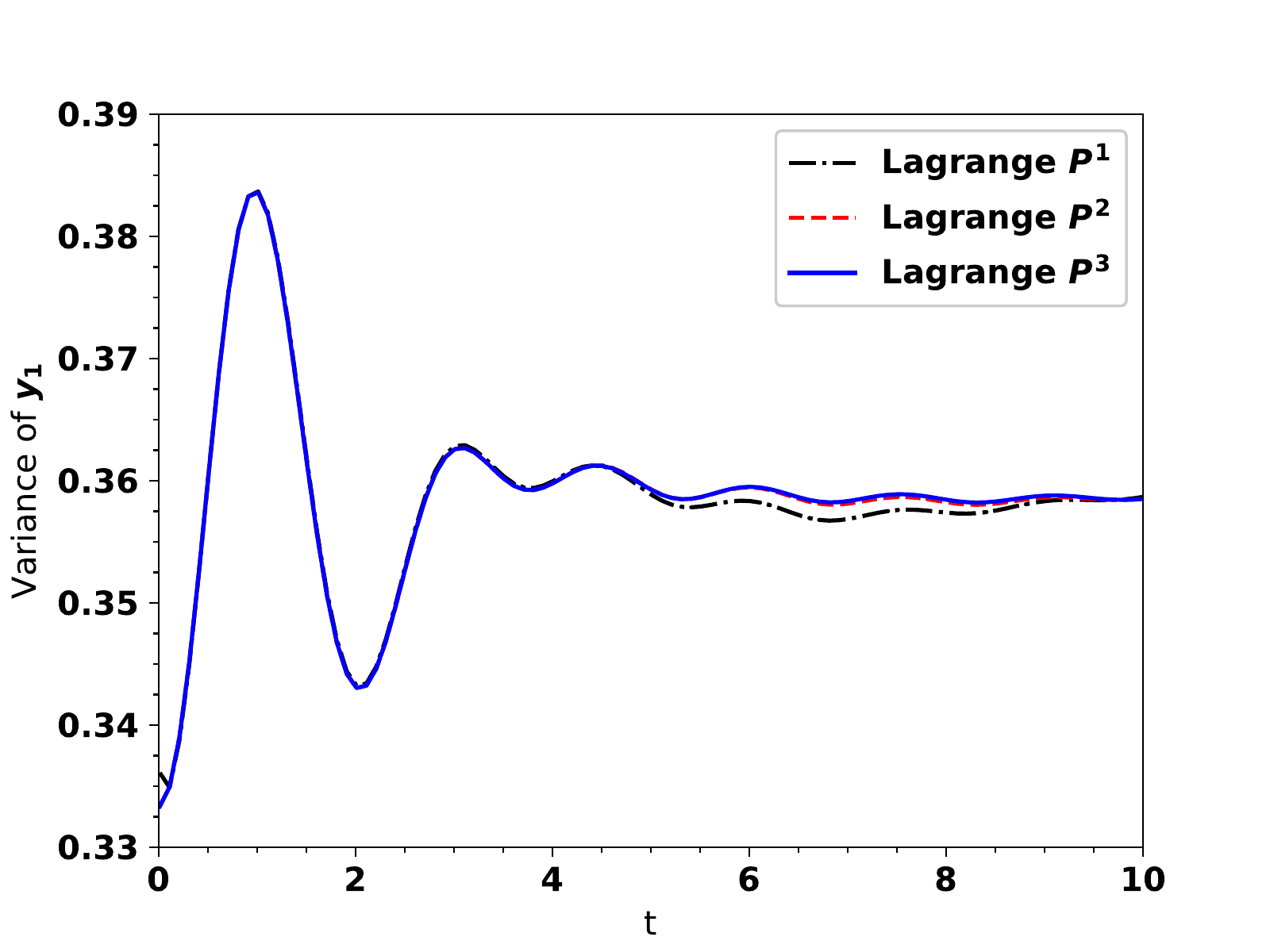}}
		\subfigure[$y_3$]{\includegraphics[width=.49\textwidth]{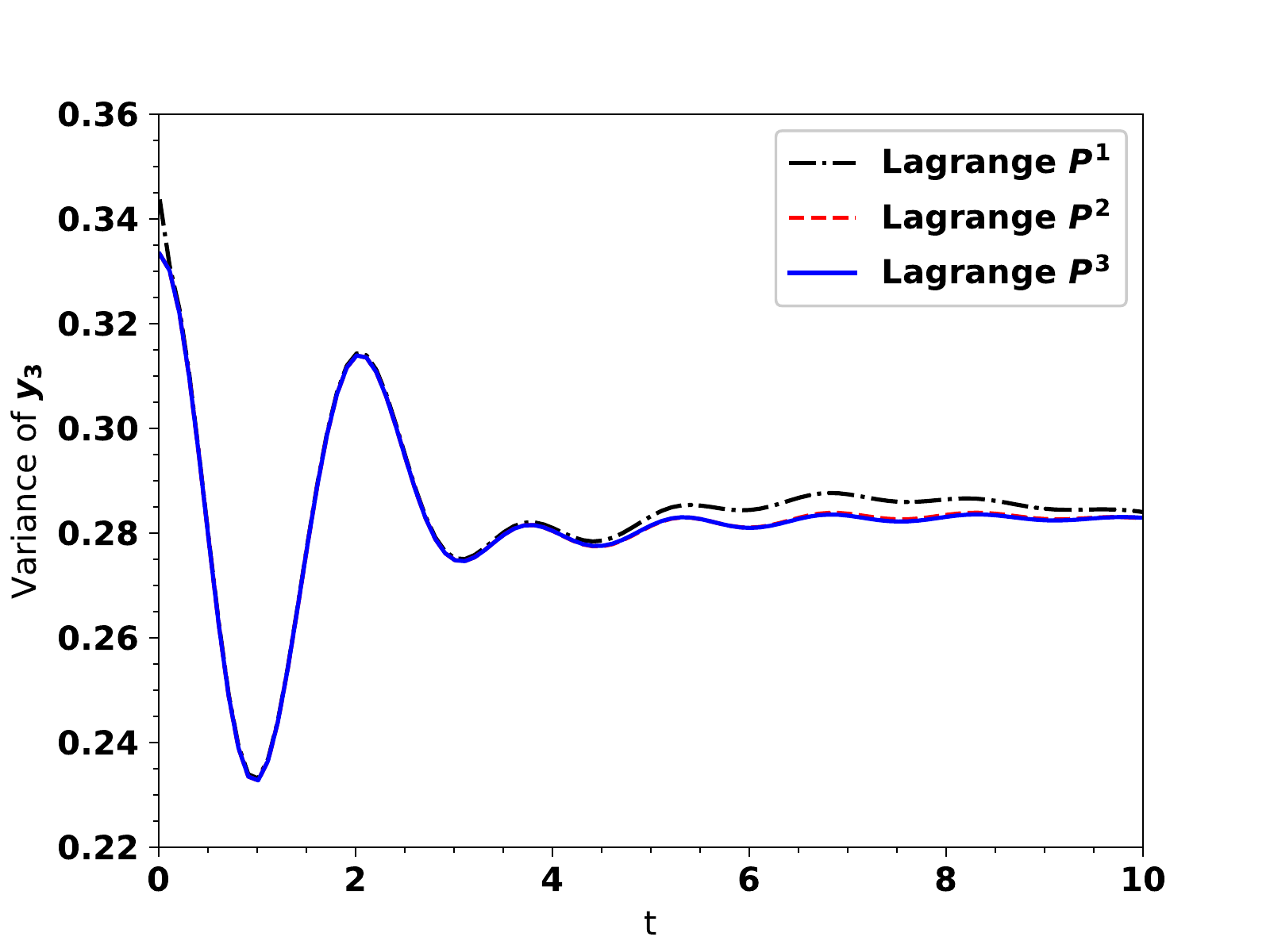}}\\
	\end{center}
	\caption{Time evolution of the variance of the solution in three-dimensional random inputs for K-O problem.   (a) $y_1$; (b) $y_3$. 
	}
	\label{fig:KO_3D}
\end{figure}

%
%
%
%
%


\section{Conclusions and future work}
\label{sec:conclusion}

This work introduces a systematic framework of (adaptive) sparse grid collocation schemes for high-order piecewise polynomial space. We consider both Lagrange and Hermite interpolation methods on nested collocation points.   For function interpolation, it was verified that higher order methods perform better for smooth functions, and the Hermite interpolation methods provide a  solution representation more concentrated towards singularities.  In a separate work  \cite{huang2019adaptive}, we apply the collocation scheme to facilitate the computation of adaptive multiresolution DG scheme for nonlinear hyperbolic equations. It was found in  \cite{huang2019adaptive} that the Hermite interpolation provides more stable numerical solution than Lagrange interpolation. Another possible application of this work is to construct  adaptive semi-Lagrangian schemes, which will be explored in the future.

\appendix
\section{Proof of Lemma \ref{lem1} }
\label{sec:append2}


	Using the definition \eqref{eq:point}, \eqref{eq:nested_relation1} can be represented as
	\begin{align}
	\label{eq:nested_relation2}
	2 x^{0}_{i,0} = x^{0}_{r,0}, \quad \text{or} \quad 
	2 x^{0}_{i,0} = 1+ x^{0}_{r,0}.
	\end{align}
	Therefore, we can find  $\begin{pmatrix} 2P+2 \\ P+1\\ \end{pmatrix}$ different choices of $\{\alpha_i\}$ for general $P$, and the values of $\{\alpha_i\}$ can be obtained from solving \eqref{eq:nested_relation2}. 
	 For example, when $P=0$, \eqref{eq:nested_relation2} gives
	\begin{align*}
	2x^{0}_{0,0} =  x^{0}_{0,0}  \Rightarrow x^{0}_{0,0}=0, \quad
	\text{or} \quad   
	2x^{0}_{0,0} = 1+x^{0}_{0,0} \Rightarrow  x^{0}_{0,0}=1.
	\end{align*}
	This implies $X^{0}_{n}=\{2^{-n}j\}_{j=0}^{2^n-1}$ or $X^{0}_{n}=\{2^{-n}j\}_{j=1}^{2^n}$.
	
	However, for $P\geq1$, there will be some redundant counts causing points to overlap. This includes 
	\begin{align*}
	\left\{ \begin{array}{ll}
	x^{j}_{0,n-1}=x^{2j}_{0,n}\\
	x^{j}_{1,n-1}=x^{2j}_{1,n}\\
	\end{array} \right.
	\quad \text{or} \quad \left\{ \begin{array}{ll}
	2 x^{0}_{0,0} =  x^{0}_{0,0} \\
	2 x^{0}_{1,0} =  x^{0}_{1,0} \\
	\end{array} \right.  \quad \Rightarrow \quad x^{0}_{0,0}=x^{0}_{1,0}=0
	\end{align*}
	and
	\begin{align*}
	\left\{ \begin{array}{ll}
	x^{j}_{P-1,n-1}=x^{2j+1}_{P-1,n}\\
	x^{j}_{P,n-1}=x^{2j+1}_{P,n}\\
	\end{array} \right.
	\quad \text{or}\quad
	\left\{ \begin{array}{ll}
	2 x^{0}_{P-1,0} = 1+x^{0}_{P-1,0}, \\
	2 x^{0}_{P,0}= 1+x^{0}_{P,0}
	\end{array}
	\right. \quad \Rightarrow \quad x^{0}_{P-1,0}=x^{0}_{P,0}=1.
	\end{align*}
	Therefore, we need to exclude those 
	$2\begin{pmatrix} 2P \\ P-1\\\end{pmatrix}$ 
	cases. The intersect of those two cases consists of $\begin{pmatrix} 2P-2 \\ P-3\\ \end{pmatrix}$ choices. Consequently, there exists 
	$\begin{pmatrix} 2P+2 \\ P+1\\	\end{pmatrix} - 2  \begin{pmatrix} 2P \\ P-1\\	\end{pmatrix} + \begin{pmatrix} 2P-2 \\ P-3\\	\end{pmatrix}$ types of the nested points for $P\geq0$, and the lemma is proved.

\section{Interpolation basis functions in 1D }
\label{sec:append1}

Here, we list some choices of interpolation points and corresponding basis functions $\{\phi_{i,l}\}$ and $\{\varphi_{i,l}\}.$ Note that the basis functions in $W^{K}_1$ are piecewise polynomials, and they are all supported on either interval $I_l:=(0,\frac{1}{2})$ or $I_r:=(\frac{1}{2},1)$ and vanish on the other half. Therefore, for simplicity of notation, we only declare the function on its support. 

\subsection{All interpolation basis functions with $K=0$}\label{sec:append1-0}

When $K=0$, we can only take $P=M=0$. In this case, there are 2 types of nested points.
\begin{itemize}
	\item type 1: The interpolation points are 
	$$X^{0}_{0}=\{0^+\}, \quad  \widetilde{X}^{0}_{1}=\{ \left( \frac{1}{2}\right)^+ \}.$$ 
	The basis functions are
	\begin{align*}
	\begin{array}{lll}
	\phi_{0,0}(x)=1,  && 
	\varphi_{0,0}(x)|_{I_r}=1. \\
	\end{array}
	\end{align*} 
	
	\item type 2: The interpolation points are 
	$$X^{0}_{0}=\{1^{-}\}, \quad \widetilde{X}^{0}_{1}=\{\left(\frac{1}{2}\right)^-\}.$$ 
	The basis functions are
	\begin{align*}
	\begin{array}{lll}
	\phi_{0,0}(x)=1,  &&  
	\varphi_{0,0}(x)|_{I_l}=1. \\
	\end{array}
	\end{align*} 
	This is mirror-symmetric to type 1 with respect to the point $1/2$.
\end{itemize}

\subsection{All interpolation basis functions with $K=1$}\label{sec:append1-1}
We list all possible Lagrange/Hermite interpolation basis functions $\{\phi_{i,l}\}$ and $\{\varphi_{i,l}\}$ with $K=(P+1)(M+1)-1=1$. This means we can choose $P=0, M=1$ or  $P=1, M=0.$ 

%
%

\subsubsection{Hermite interpolation $P=0$ and $M=1$}\label{sec:append1.1}
There are 2 types of nested points.
\begin{itemize}
	\item type 1: The interpolation points are 
		$$X^{0}_{0}=\{0^+\}, \quad  \widetilde{X}^{0}_{1}=\{ \left( \frac{1}{2}\right)^+ \}.$$ 
		The basis functions are
	\begin{align*}
	\begin{array}{lll}
	\phi_{0,0}(x)=1,  &&  \phi_{0,1}(x)=x,\\
	\varphi_{0,0}(x)|_{I_r}=1, && \varphi_{0,1}(x)|_{I_r}=x-\frac{1}{2}. \\
	\end{array}
	\end{align*} 
	
	\item type 2: The interpolation points are 
		$$X^{0}_{0}=\{1^{-}\}, \quad \widetilde{X}^{0}_{1}=\{\left(\frac{1}{2}\right)^-\}.$$ 
	The basis functions are
	\begin{align*}
	\begin{array}{lll}
	\phi_{0,0}(x)=1,  &&  \phi_{0,1}(x)=x-1,\\
	\varphi_{0,0}(x)|_{I_l}=1, && \varphi_{0,1}(x)|_{I_l}=x-\frac{1}{2}. \\
	\end{array}
	\end{align*} 
	Actually, type 2 is mirror-symmetric to  type 1 with respect to the point $1/2$.
\end{itemize}
	
\subsubsection{Lagrange interpolation $P=1$ and $M=0$}\label{sec:append1.2}
There are 4 types of nested points.
	\begin{itemize}
		\item type 1: The interpolation points are 
		$$X^{1}_{0}=\{ 0^+, 1^-\},\quad \widetilde{X}^{1}_{1}=\{ \left(\frac{1}{2}\right)^{-}, \left(\frac{1}{2}\right)^{+}\}.$$
		The basis functions are 
		\begin{align*} 
		\begin{array}{lll}
		\phi_{0,0}(x)=-x+1,  && \phi_{1,0}(x)=x,\\
		\varphi_{0,0}(x)|_{I_l}=2x, && \varphi_{1,0}(x)|_{I_r}=-2x+2.
		\end{array}
		\end{align*}
		
		\item type 2: The interpolation points are 
		$$X^{1}_0=\{ \frac{1}{3}, \frac{2}{3} \}, \quad \widetilde{X}^{1}_1=\{ \frac{1}{6}, \frac{5}{6} \}.$$
		The basis functions are 
		\begin{align*} 
		\begin{array}{lll}
		\phi_{0,0}(x)=-3x+2,  && \phi_{1,0}(x)=3x-1\\
		\varphi_{0,0}(x)|_{I_l}= -6x+2,  && 
		\varphi_{1,0}(x)|_{I_r}=6x-4.
		\end{array} 
		\end{align*}
		
		\item type 3: The interpolation points are 
		$$X^{1}_0=\{ 0^{+}, \left(\frac{1}{2}\right)^{+}\}, \quad \widetilde{X}^{1}_1=\{ \left(\frac{1}{4}\right)^+, \left(\frac{3}{4}\right)^+ \}.$$
		The basis functions are 
		\begin{align*} 
		\begin{array}{lll}
		\phi_{0,0}(x)=-2x+1,  && \phi_{1,0}(x)=2x\\
		\varphi_{0,0}(x)|_{I_l}=4x,  &&
		\varphi_{1,0}(x)|_{I_r}=4x-2.
		\end{array}
		\end{align*}
		
		\item type 4: The interpolation points are 
		$$X^{1}_0=\{ \left(\frac{1}{2}\right)^{-}, 1^{-} \}, \quad \widetilde{X}^{1}_1=\{ \left(\frac{1}{4}\right)^-, \left(\frac{3}{4}\right)^- \}.$$ 
		The basis functions are 
		\begin{align*} 
		\begin{array}{lll}
		\phi_{0,0}(x)=-2x+2,  && \phi_{1,0}(x)=2x-1\\
		\varphi_{0,0}(x)|_{I_l}=-4x+2,  && 
		\varphi_{1,0}(x)|_{I_r}=-4x+4.
		\end{array}
		\end{align*}
		This is mirror-symmetric to type 3 with respect to the point $1/2$.
	\end{itemize}

\subsection{Special interpolation basis functions with $K=2, 3$}\label{sec:append1-2}
For higher order polynomials, there are many types of choices one can make as illustrated in Lemma \ref{lem1}. Therefore, we only list some choices which are used in the numerical experiments.

\subsubsection{Lagrange interpolation $P=2$ and $M=0$}
\label{sec:append2.2}
The interpolation points are 
\begin{align*}
 	X^{2}_0=\{0^+, \left(\frac{1}{2}\right)^-, 1^- \},\quad 
 	\widetilde{X}^{2}_1=\{ \left(\frac{1}{4}\right)^-, \left(\frac{1}{2}\right)^+, \left(\frac{3}{4}\right)^- \}.
\end{align*}
The basis functions are 
\begin{align*} 
	\begin{array}{lll}
	\phi_{0,0}(x)=2(x-\frac{1}{2})(x-1),  & 	
	\phi_{1,0}(x)= -4x(x-1),	& 
	\phi_{2,0}(x)=2x(x-\frac{1}{2}),\\
	\varphi_{0,0}(x)|_{I_l}=-16x(x-\frac{1}{2}),  &
	\varphi_{1,0}(x)|_{I_r}=8(x-\frac{3}{4})(x-1),& 
	\varphi_{2,0}(x)|_{I_r}=-16(x-\frac{1}{2})(x-1).
	\end{array}
\end{align*}

\subsubsection{Lagrange interpolation $P=3$ and $M=0$} 
\label{sec:append2.4}
The interpolation points are 
\begin{align*}
	X^{3}_0=\{ 0^+, \frac{1}{3},  \frac{2}{3} , 1^-\} , \quad  
	\widetilde{X}^{3}_{1} = \{ \frac{1}{6}, \left(\frac{1}{2}\right)^-, \left(\frac{1}{2}\right)^+, \frac{5}{6} \}. 
\end{align*}
The basis functions are 
\begin{align*}
	\begin{array}{lll}
	\phi_{0,0}(x)= -\frac{9}{2}(x-\frac{1}{3})(x-\frac{2}{3})(x-1), &&
	\phi_{1,0}(x)= \frac{27}{2}x(x-\frac{2}{3})(x-1),\\
	\phi_{2,0}(x)= -\frac{27}{2}x(x-\frac{1}{3})(x-1), &&
	\phi_{3,0}(x)= \frac{9}{2}x(x-\frac{1}{3})(x-\frac{2}{3}), \\
	\varphi_{0,0}(x)|_{I_l}=108 x(x-\frac{1}{3})(x-\frac{1}{2}), &&
	\varphi_{1,0}(x)|_{I_l}=36x(x-\frac{1}{6})(x-\frac{1}{3}), \\
	\varphi_{2,0}(x)|_{I_r}=-36(x-\frac{2}{3})(x-\frac{5}{6})(x-1), &&
	\varphi_{3,0}(x)|_{I_r}=-108(x-\frac{1}{2})(x-\frac{2}{3})(x-1).
	\end{array}
\end{align*}

\subsubsection{Hermite interpolation $P=1$ and $M=1$}
\label{sec:append2.3}
The interpolation points are 
\begin{align*}
	X^{1}_0=\{ 0^+, 1^-\} , \quad  \widetilde{X}^{1}_{1} = \{ \left(\frac{1}{2}\right)^-, \left(\frac{1}{2}\right)^+ \}. 
\end{align*}
The basis functions are 
\begin{align*}
	\begin{array}{lll}
	\phi_{0,0}(x)= 2(x+\frac{1}{2})(x-1)^2, &&
	\phi_{1,0}(x)= -2x^2(x-\frac{3}{2}),  \\
	\phi_{0,1}(x)= x(x-1)^2, &&
	\phi_{1,1}(x)= x^2(x-1), \\
	\varphi_{0,0}(x)|_{I_l}=-16x^2(x-\frac{3}{4}), &&
	\varphi_{1,0}(x)|_{I_r}=16(x-1)^2(x-\frac{1}{4}), \\
	\varphi_{0,1}(x)|_{I_l}=4x^2(x-\frac{1}{2}),  &&
	\varphi_{1,1}(x)|_{I_r}=4(x-1)^2(x-\frac{1}{2}).
	\end{array}
\end{align*}

\section{Proof of Theorem \ref{thm:1}} \label{sec:proof}

We prove \eqref{eqn:relation} and \eqref{eqn:relation3} first.
	We split the error into two parts
	\begin{align*}
	f - \widehat{\cI}^{P,M}_{N}[f] = f - \cI^{P,M}_{N}[f] + \cI^{P,M}_{N}[f] - \widehat{\cI}^{P,M}_{N}[f].
	\end{align*}
	By the property of multi-dimensional interpolation, we have that
	there is a constant $\tilde{C}$ independent of $N$, such that
	\begin{align*}
	 	\|  f - \cI^{P,M}_{N} [f] \|_{W^{s,p}(\Omega_{N})} 
	 	\leq \tilde{C}  \, (h_{N})^{q+1-s} | f |_{W^{q+1,p}(\Omega)} 
	 	\leq \tilde{C} \, 2^{-N(q+1-s)} | f |_{W^{q+1,M,p}(\Omega)}.
	 \end{align*} 
	Therefore, we only need to bound
	\begin{align*}
	\cI^{P,M}_{N} [f] - \widehat{\cI}^{P,M}_{N}[f] 
	=& \sum_{|\bn|_\infty \leq N, \bn\in\mathbb{N}_{0}^{d}} \widetilde{\mathcal{I}}^{P,M}_{n_1,x_1} \circ \cdots \circ \widetilde{\mathcal{I}}^{P,M}_{n_d,x_d} [f]  
	- \sum_{|\bn|_1 \leq N, \bn\in\mathbb{N}_{0}^{d}} \widetilde{\mathcal{I}}^{P,M}_{n_1,x_1} \circ \cdots \circ \widetilde{\mathcal{I}}^{P,M}_{n_d,x_d} [f]  \\
	=& \sum_{\substack{|\bn|_{\infty}\leq N, |\bn|_1 \geq N+1 \\ \bn\in\mathbb{N}_{0}^{d}}} \widetilde{\mathcal{I}}^{P,M}_{n_1,x_1} \circ \cdots \circ \widetilde{\mathcal{I}}^{P,M}_{n_d,x_d} [f] .
	\end{align*}
	
	\noindent
	In what follows, we will estimate the term $\widetilde{\cI}^{P,M}_{n_1,x_1} \circ \cdots \circ \widetilde{\mathcal{I}}^{P,M}_{n_d,x_d}[f]$. 
	For a multi-index $\bn$, let $L= supp(\bn) := \{ \beta_1, \cdots, \beta_\gamma\} \subset \{1,\ldots,d\}$, i.e., $n_{\beta}\geq1$ if $\beta\in L$.  And correspondingly, we define the multi-indexes $\mathbf{j}_{L}$ and $\mathbf{n}_{L} \in \mathbb{N}^d_0$ as 
	\begin{align*}
	(\mathbf{j}_{L})_{\beta} = \left\{ \begin{array}{ll} 
		\lfloor \, j_{\beta}/2 \, \rfloor, & \text{if} \, \beta\in L, \\
		j_{\beta}, & \text{if} \, \beta\notin L, \\
		\end{array}\right.  \quad \text{and} \quad
		(\mathbf{n}_{L})_{\beta} = \left\{ \begin{array}{ll} 
		n_{\beta}-1, & \text{if} \, \beta\in L, \\
		0, & \text{if} \, \beta\notin L. \\
		\end{array}\right. 
	\end{align*} 
	Then
	\begin{align*}
	\| \widetilde{\mathcal{I}}^{P,M}_{n_1,x_1} \circ \cdots \circ \widetilde{\mathcal{I}}^{P,M}_{n_d,x_d} [f] \| _{L^{p}(I^{\bj}_{\bn})} 
	\leq & (C_{1})^\gamma (C_2)^{d-\gamma} (1+2^{q+1})^{\gamma} h_{\beta_1}^{q+1} \cdots h_{\beta_\gamma}^{q+1} |f|_{W^{q+1,M,p,L}(I_{\bn_L}^{\bj_L})} \notag \\
	\leq & \bar{C}^d (1+2^{q+1})^{\gamma} 2^{-n_{\beta_1}(q+1)} \cdots 2^{-n_{\beta_\gamma} (q+1)} |f|_{W^{q+1,M,p,L}(I_{\bn_L}^{\bj_L})} \notag \\
	= & \bar{C}^d (1+2^{q+1})^{\gamma} 2^{-(n_{\beta_1}+\cdots+n_{\beta_\gamma})(q+1)}  |f|_{W^{q+1,M,p,L}(I_{\bn_L}^{\bj_L})} \notag \\
	=& \bar{C}^d (1+2^{q+1})^{\gamma} 2^{-|\bn|_{1}(q+1)}  |f|_{W^{q+1,M,p,L}(I_{\bn_L}^{\bj_L})}
	\end{align*}
	The last equality holds because when $i\notin\{\beta_1,\cdots,\beta_{\gamma}\}$, we have $n_i=0$. On the other hand,
	\begin{align*}
	& | \widetilde{\mathcal{I}}^{P,M}_{n_1,x_1} \circ \cdots \circ \widetilde{\mathcal{I}}^{P,M}_{n_d,x_d} [f] |^2 _{H^{1}(I^{\bj}_{\bn})}  \\
	=& \sum_{m=1}^{d} | \partial_{x_m} \widetilde{\mathcal{I}}^{P,M}_{n_1,x_1} \circ \cdots \circ \widetilde{\mathcal{I}}^{P,M}_{n_d,x_d} [f] |^2 _{L^{2}(I^{\bj}_{\bn})} \\
	=& \sum_{x_m\in L} | \partial_{x_m} \widetilde{\mathcal{I}}^{P,M}_{n_1,x_1} \circ \cdots \circ \widetilde{\mathcal{I}}^{P,M}_{n_d,x_d} [f] |^2 _{L^{2}(I^{\bj}_{\bn})} 
	+ \sum_{x_m \notin L} | \partial_{x_m} \widetilde{\mathcal{I}}^{P,M}_{n_1,x_1} \circ \cdots \circ \widetilde{\mathcal{I}}^{P,M}_{n_d,x_d} [f] |^2 _{L^{2}(I^{\bj}_{\bn})} \\
	\leq&  \sum_{x_m\in L}  (C_1)^{2\gamma} (C_2)^{2(d-\gamma)} (1+2^{q+1})^{2(\gamma-1)} (1+2^{q+1-1})^2 h_{\beta_1}^{2(q+1)} \cdots h_{\beta_\gamma}^{2(q+1)} h_{m}^{-2} |f|^2_{W^{q+1,M,2,L}(I_{\bn_L}^{\bj_L})} \\
	+&  \sum_{x_m\notin L}  (C_1)^{2\gamma} (C_2)^{2(d-\gamma)} (1+2^{q+1})^{2\gamma}  h_{\beta_1}^{2(q+1)} \cdots h_{\beta_\gamma}^{2(q+1)} |f|^2_{W^{q+1,M,2,L}(I_{\bn_L}^{\bj_L})} \\
	\leq&  \sum_{x_m\in L}   (\bar{C})^{2d} (1+2^{q+1})^{2\gamma} \, 2^{-2|\bn|_1 (q+1) +2|\bn|_{\infty} } |f|^2_{W^{q+1,M,2,L}(I_{\bn_L}^{\bj_L})} \\
	+&  \sum_{x_m\notin L}  (\bar{C})^{2d} (1+2^{q+1})^{2\gamma}  \, 2^{-2|\bn|_1 (q+1)} |f|^2_{W^{q+1,M,2,L}(I_{\bn_L}^{\bj_L})} \\
	 \leq& d\,\bar{C}^{2d} (1+2^{q+1})^{2\gamma} \, 2^{-2|\bn|_1 (q+1) +2|\bn|_{\infty} } |f|^2_{W^{q+1,M,2,L}(I_{\bn_L}^{\bj_L})} .
	\end{align*}
	Hence, summing up on all elements, 
	\begin{align*}
	\| \widetilde{\mathcal{I}}^{P,M}_{n_1,x_1} \circ \cdots \circ \widetilde{\mathcal{I}}^{P,M}_{n_d,x_d} [f] \| _{L^{p}(\Omega_{N})} 
	= & \| \widetilde{\mathcal{I}}^{P,M}_{n_1,x_1} \circ \cdots \circ \widetilde{\mathcal{I}}^{P,M}_{n_d,x_d} [f] \| _{L^{p}(\Omega_{\bn})} \notag\\
	\leq& \sum_{\mathbf{0}\leq \mathbf{j} \leq 2^\bn-\mathbf{1}} \| \widetilde{\mathcal{I}}^{P,M}_{n_1,x_1} \circ \cdots \circ \widetilde{\mathcal{I}}^{P,M}_{n_d,x_d} [f] \| _{L^{p}(I^{\bj}_{\bn})} \notag\\
	\leq &  \sum_{\mathbf{0}\leq \mathbf{j} \leq 2^\bn-\mathbf{1}}  \bar{C}^d (1+2^{q+1})^{\gamma} 2^{-|\bn|_{1}(q+1)}  |f|_{W^{q+1,M,p,L}(I_{\bn_L}^{\bj_L})} \notag \\
	\leq &  \sum_{\mathbf{0}\leq \mathbf{j} \leq 2^{(\bn_{L})}-\mathbf{1}}  \bar{C}^d (1+2^{q+1})^{\gamma} 2^{-|\bn|_{1}(q+1)}  \, 2^{\gamma} \, |f|_{W^{q+1,M,p,L}(I_{\bn_L}^{\bj})} \notag \\
	= &  \bar{C}^d (2+2^{q+2})^{\gamma} 2^{-|\bn|_{1}(q+1)}   \, |f|_{W^{q+1,M,p,L}(\Omega)}, \\
	| \widetilde{\mathcal{I}}^{P,M}_{n_1,x_1} \circ \cdots \circ \widetilde{\mathcal{I}}^{P,M}_{n_d,x_d} [f] |^2_{H^{1}(\Omega_{N})} 
	=& \sum_{\mathbf{0}\leq \mathbf{j} \leq 2^\bn-\mathbf{1}} | \widetilde{\mathcal{I}}^{P,M}_{n_1,x_1} \circ \cdots \circ \widetilde{\mathcal{I}}^{P,M}_{n_d,x_d} [f] |^2 _{H^{1}(\Omega_{\bn})} \notag\\
	\leq &  \sum_{\mathbf{0}\leq \mathbf{j} \leq 2^\bn-\mathbf{1}}  d \, \bar{C}^{2d} (1+2^{q+1})^{2\gamma} 2^{-2|\bn|_{1}(q+1) + 2|\bn|_{\infty}}  |f|^2_{W^{q+1,M,2,L}(I_{\bn_L}^{\bj_L})} \notag \\
	\leq &  \sum_{\mathbf{0}\leq \mathbf{j} \leq 2^{(\bn_{L})}-\mathbf{1}}  d \, \bar{C}^{2d} (1+2^{q+1})^{2\gamma} 2^{-2|\bn|_{1}(q+1)+2|\bn|_{\infty}}  \, 2^{2\gamma} \, |f|^2_{W^{q+1,M,2,L}(I_{\bn_L}^{\bj})} \notag \\
	= &  d\, \bar{C}^{2d} (2+2^{q+2})^{2\gamma} 2^{-2|\bn|_{1}(q+1) + 2|\bn|_{\infty}}   \, |f|^2_{W^{q+1,M,2,L}(\Omega)},
	\end{align*}
	where we have taken into account the overlap of the cells on the fourth line.
	As a consequence,
	\begin{align*}
	& \|\cI^{P,M}_{N} [f] - \widehat{\cI}^{P,M}_{N} [f] \| _{L^{p}(\Omega_{N})}  \\
	\leq &  \sum_{\substack{|\bn|_{\infty}\leq N, |\bn|_1 \geq N+1\\ \bn\in\mathbb{N}_0^d} } \|\widetilde{\mathcal{I}}^{k}_{n_1,x_1} \circ \cdots \circ \widetilde{\mathcal{I}}^{k}_{n_d,x_d} [f] \| _{L^{p}(\Omega_{N})}  \\
	\leq &  \sum_{\substack{|\bn|_{\infty}\leq N, |\bn|_1 \geq N+1\\ \bn\in\mathbb{N}_0^d} } \bar{C}^d (2+2^{q+2})^{\gamma} 2^{-|\bn|_{1}(q+1)}   \, |f|_{W^{q+1,M,p,L}(\Omega)}   \quad 
	\text{with}\, L= supp(\bn), \gamma=|L|\\
	\leq & \sum_{\substack{|\bn|_{\infty}\leq N, |\bn|_1 \geq N+1\\ \bn\in\mathbb{N}_0^d} } \bar{C}^d (2+2^{q+2})^{d} 2^{-(N+1)(q+1)}   \, |f|_{W^{q+1,M,p}(\Omega)}  \\
	\leq &  \bar{C}^d (2+2^{q+2})^{d} 2^{-(N+1)(q+1)}   \, |f|_{W^{q+1,M,p}(\Omega)}  \sum_{\substack{|\bn|_{\infty}\leq N, |\bn|_1 \geq N+1\\ \bn\in\mathbb{N}_0^d} } 1 \\
	\leq &  \bar{C}^d (N+1)^d \left( 1-\frac{1}{d!} \right)  \, \left(2+2^{q+2}\right)^{d} \,  2^{-(N+1) (q+1)} |f|_{W^{q+1,M,p}(\Omega)} \\
	\leq &  \bar{C}^d (N+1)^d \, \left(2+2^{q+2}\right)^{d} \,  2^{-(N+1) (q+1)} |f|_{W^{q+1,M,p}(\Omega)}, 
	\end{align*}
	where the last two inequalities follow from (A.11) in \cite{guo2016sparse}. Therefore, \eqref{eqn:relation} is proved. For the $H^1$ broken semi-norm, similarly we have
	\begin{align*}
		& |\cI^{P,M}_{N} [f] - \widehat{\cI}^{P,M}_{N} [f] | _{H^{1}(\Omega_{N})}  \\
		\leq &  \sum_{\substack{|\bn|_{\infty}\leq N, |\bn|_1 \geq N+1\\ \bn\in\mathbb{N}_0^d} } |\widetilde{\mathcal{I}}^{k}_{n_1,x_1} \circ \cdots \circ \widetilde{\mathcal{I}}^{k}_{n_d,x_d} [f] | _{H^{1}(\Omega_{N})}  \\
		\leq &  \sum_{\substack{|\bn|_{\infty}\leq N, |\bn|_1 \geq N+1\\ \bn\in\mathbb{N}_0^d} } \sqrt{d} \, \bar{C}^d (2+2^{q+2})^{\gamma} 2^{-|\bn|_{1}(q+1) + |\bn|_{\infty}}   \, |f|_{W^{q+1,M,2,L}(\Omega)}   \quad 
		\text{with}\, L= supp(\bn), \gamma=|L|\\
		\leq & \sqrt{d} \, \bar{C}^d (2+2^{q+2})^{d}  |f|_{W^{q+1,M,2,L}(\Omega)} \sum_{\substack{|\bn|_{\infty}\leq N, |\bn|_1 \geq N+1\\ \bn\in\mathbb{N}_0^d} } 2^{-|\bn|_{1}(q+1) + |\bn|_{\infty}}   \\
		\leq & \sqrt{d} \, \bar{C}^d (2+2^{q+2})^{d}  |f|_{W^{q+1,M,2,L}(\Omega)} d 2^{d-1}2^{-qN} , 
	\end{align*}
	where the last inequality is proved in \cite{guo2016sparse} as well.  \eqref{eqn:relation3} can be obtained.
	
	Next, we will prove \eqref{eqn:relation2} and \eqref{eqn:relation4}. Similarly, the error is splitted into two parts
	\begin{align*}
	f - \widehat{\cI}^{P,M}_{c,N}[f] = f - \cI^{P,M}_{N}[f] + \cI^{P,M}_{N}[f] - \widehat{\cI}^{P,M}_{c,N}[f],
	\end{align*}
	and
	\begin{align*}
	\cI^{P,M}_{N} [f] - \widehat{\cI}^{P,M}_{c,N}[f] 
	=& \sum_{\substack{|\bn|_1 \geq N-d+2 \\ \mathbf{-1} \leq \bn \leq \mathbf{N} }} \widetilde{\mathcal{I}}^{P,M}_{c,n_1,x_1} \circ \cdots \circ \widetilde{\mathcal{I}}^{P,M}_{c,n_d,x_d} [f] .
	\end{align*}
	Here, we denote $L_2=supp(\bn)=\{\beta_{1}, \cdots, \beta_{\gamma}\}$, and $L_1=\{ \alpha_{1}, \cdots, \alpha_{\theta}\} \subset \{1,\cdots,d\}$ that $n_{\alpha}=-1$ if $\alpha\in L_1$. Hence, we can prove that 
	\begin{align*}
	 \| \widetilde{\mathcal{I}}^{P,M}_{c,n_1,x_1} \circ \cdots \circ \widetilde{\mathcal{I}}^{P,M}_{c,n_d,x_d} [f] \| _{L^{p}(I^{\bj}_{\bn})}  
	\leq & \bar{C}^d (1+2^{q+1})^{\gamma} 2^{-n_{\beta_1}(q+1)} \cdots 2^{-n_{\beta_\gamma} (q+1)} |f|_{W^{q+1,M,p,L}(I_{\bn_L}^{\bj_L})} \notag \\
	=& \bar{C}^d (1+2^{q+1})^{\gamma} 2^{-(|\bn|_{1}+\theta) (q+1)}  |f|_{W^{q+1,M,p,L}(I_{\bn_L}^{\bj_L})}\\
	| \widetilde{\mathcal{I}}^{P,M}_{c,n_1,x_1} \circ \cdots \circ \widetilde{\mathcal{I}}^{P,M}_{c,n_d,x_d} [f] |^2 _{H^{1}(I^{\bj}_{\bn})}  
	=& \sum_{x_m\in L_1} | \partial_{x_m} \widetilde{\mathcal{I}}^{P,M}_{c,n_1,x_1} \circ \cdots \circ \widetilde{\mathcal{I}}^{P,M}_{c,n_d,x_d} [f] |^2 _{L^{2}(I^{\bj}_{\bn})} \\
	+ & \sum_{x_m\in L_2} | \partial_{x_m} \widetilde{\mathcal{I}}^{P,M}_{c,n_1,x_1} \circ \cdots \circ \widetilde{\mathcal{I}}^{P,M}_{c,n_d,x_d} [f] |^2 _{L^{2}(I^{\bj}_{\bn})} \\
	+ & \sum_{x_m \notin (L_1\cup L_2)} | \partial_{x_m} \widetilde{\mathcal{I}}^{P,M}_{c,n_1,x_1} \circ \cdots \circ \widetilde{\mathcal{I}}^{P,M}_{c,n_d,x_d} [f] |^2 _{L^{2}(I^{\bj}_{\bn})} \\
	\leq&  \sum_{x_m\in L_2}   (\bar{C})^{2d} (1+2^{q+1})^{2\gamma} \, 2^{-2(|\bn|_1+\theta) (q+1) +2|\bn|_{\infty} } |f|^2_{W^{q+1,M,2,L}(I_{\bn_L}^{\bj_L})} \\
	+&  \sum_{x_m\notin (L_1\cup L_2)}  (\bar{C})^{2d} (1+2^{q+1})^{2\gamma}  \, 2^{-2(|\bn|_1+\theta) (q+1)} |f|^2_{W^{q+1,M,2,L}(I_{\bn_L}^{\bj_L})} \\
	\leq& (d-\theta)\,\bar{C}^{2d} (1+2^{q+1})^{2\gamma} \, 2^{-2(|\bn|_1+\theta) (q+1) +2|\bn|_{\infty} } |f|^2_{W^{q+1,M,2,L}(I_{\bn_L}^{\bj_L})} ,
	\end{align*}
	and 
	\begin{align*}
	& \| \widetilde{\mathcal{I}}^{P,M}_{c,n_1,x_1} \circ \cdots \circ \widetilde{\mathcal{I}}^{P,M}_{c,n_d,x_d} [f] \| _{L^{p}(\Omega_{N})} 
	\leq   \bar{C}^d (2+2^{q+2})^{\gamma} 2^{-(|\bn|_{1}+\theta) (q+1)}   \, |f|_{W^{q+1,M,p,L}(\Omega)}, \\
	& | \widetilde{\mathcal{I}}^{P,M}_{c,n_1,x_1} \circ \cdots \circ \widetilde{\mathcal{I}}^{P,M}_{c,n_d,x_d} [f] |_{H^{1}(\Omega_{N})} 
	\leq   \sqrt{d-\theta}\, \bar{C}^{d} (2+2^{q+2})^{\gamma} 2^{-(|\bn|_{1}+\theta)(q+1) + |\bn|_{\infty}}   \, |f|_{W^{q+1,M,2,L}(\Omega)}.
	\end{align*}
	Consequently, 
	\begin{align*}
	 \|\cI^{P,M}_{N} [f] - \widehat{\cI}^{P,M}_{c,N}[f] \| _{L^{p}(\Omega_{N})}  
	\leq &  \sum_{\substack{|\bn|_1 \geq N-d+2\\ \mathbf{-1}\leq \bn\leq \mathbf{N}} } \|\widetilde{\mathcal{I}}^{k}_{c,n_1,x_1} \circ \cdots \circ \widetilde{\mathcal{I}}^{k}_{c,n_d,x_d} [f] \| _{L^{p}(\Omega_{N})}  \\
	\leq &  \sum_{\substack{|\bn|_1 \geq N-d+2\\ \mathbf{-1}\leq \bn\leq \mathbf{N}} } \bar{C}^d (2+2^{q+2})^{\gamma} 2^{-(|\bn|_{1}+\theta)(q+1)}   \, |f|_{W^{q+1,M,p,L}(\Omega)}   \\
	\leq &  \bar{C}^d (2+2^{q+2})^{d} 2^{-(N-d+2)(q+1)}   \, |f|_{W^{q+1,M,p}(\Omega)}  \sum_{\substack{|\bn|_1 \geq N-d+2\\ \mathbf{-1}\leq \bn\leq \mathbf{N}} } 2^{-\theta(q+1)} \\
 	|\cI^{P,M}_{N} [f] - \widehat{\cI}^{P,M}_{c,N}[f] | _{H^{1}(\Omega_{N})}  
 	\leq & \bar{C}^{d} (2+2^{q+2})^{d} \, |f|_{W^{q+1,M,2}(\Omega)}  \sum_{\substack{|\bn|_1 \geq N-d+2\\ \mathbf{-1}\leq \bn\leq \mathbf{N}} }  \sqrt{d-\theta}\, 2^{-(|\bn|_{1}+\theta)(q+1) + |\bn|_{\infty}}   .
	\end{align*}

	Note that 
	\begin{align*}
	\sum_{\substack{|\bn|_1 \geq N-d+2\\ \mathbf{-1}\leq \bn\leq \mathbf{N}} } 2^{-\theta(q+1)}
	=& \sum_{\theta=0}^{d} 2^{-\theta(q+1)} \left\{ \sum_{\substack{|\bn|_1 \geq N-d+2\\ \mathbf{-1}\leq \bn\leq \mathbf{N} \\ |L_1|=\theta} } 1  \right\}
	= \sum_{\theta=0}^{d} 2^{-\theta(q+1)} \left\{ \sum_{\substack{ \mathbf{-1}\leq \bn\leq \mathbf{N} \\ |L_1|=\theta } } 1 - \sum_{\substack{|\bn|_1 \leq N-d+1\\ \mathbf{-1}\leq \bn\leq \mathbf{N} \\ |L_1|=\theta} } 1 \right\} \\
	=& \sum_{\theta=0}^{d} 2^{-\theta(q+1)} \left\{ \sum_{\substack{ \mathbf{-1}\leq \bn\leq \mathbf{N} \\ |L_1|=\theta } } 1 - \sum_{s=-\theta}^{N-d+1} \sum_{\substack{|\bn|_1 =s \\ \mathbf{-1}\leq \bn\leq \mathbf{N} \\ |L_1|=\theta} } 1 \right\} \\
	\leq& \sum_{\theta=0}^{d} 2^{-\theta(q+1)} \begin{pmatrix}d \\ \theta\\ \end{pmatrix} (N+1)^{d-\theta} \\
	=&  \left(  N+1 + 2^{-(q+1)}  \right)^d.
	\end{align*}
	Therefore, 
	\begin{align*}
	\|\cI^{P,M}_{N} [f] - \widehat{\cI}^{P,M}_{c,N}[f] \| _{L^{p}(\Omega_{N})}  
	\leq &  \bar{C}^d (2+2^{q+2})^{d} \left( N+1+2^{-(q+1)} \right)^d 2^{-(N-d+2)(q+1)}   \, |f|_{W^{q+1,M,p}(\Omega)},  
	\end{align*}
	and \eqref{eqn:relation2} can be obtained.

	On the other hand, for any $\bn$, we define $\bn'=\{n_{m_1}, \ldots, n_{m_{d-\theta}} \}$, with $m_i\notin L_1$. Then $|\bn'|_{\infty}\geq |\bn|_{\infty}$ and $|\bn'|_{1} - \theta = |\bn|_{1}$. Furthermore,
	\begin{align*}
	\sum_{\substack{|\bn|_1 \geq N-d+2\\ \mathbf{-1}\leq \bn\leq \mathbf{N}} }  \sqrt{d-\theta}\, 2^{-(|\bn|_{1}+\theta)(q+1) + |\bn|_{\infty}} 
	\leq&  \sum_{\theta=0}^{d} \sqrt{d-\theta} \sum_{\substack{|\bn'|_1 \geq N-d+\theta+2\\ \mathbf{0}\leq \bn'\leq \mathbf{N} \\ |L_1|=\theta} }  2^{-|\bn'|_{1}(q+1) }  2^{|\bn'|_{\infty} }  \\
	=& \sum_{\theta=0}^{d} \sqrt{d-\theta}  \begin{pmatrix}d \\ \theta\\ \end{pmatrix} 
	\sum_{\substack{ s=N-d+\theta+2 \\ |\bn'|_1 =s \\ \mathbf{0}\leq \bn'\leq \mathbf{N} }}^{(d-\theta)N}  2^{-|\bn'|_1(q+1)}  2^{|\bn'|_{\infty}} \\
	=& \sum_{\theta=0}^{d} \sqrt{d-\theta}  \begin{pmatrix}d \\ \theta\\ \end{pmatrix} 
	\sum_{s=N-d+\theta+2}^{(d-\theta)N}  2^{-s(q+1)}
	\sum_{\substack{|\bn'|_1 =s \\ \bn'\in\mathbb{N}^{d-\theta}_{0} } }  2^{|\bn'|_{\infty}} \\
	\leq & \sum_{\theta=0}^{d} \sqrt{d-\theta}  \begin{pmatrix}d \\ \theta\\ \end{pmatrix} 
	\sum_{s=N-d+\theta+2}^{(d-\theta)N}  2^{-s(q+1)}  (d-\theta) 2^{d-\theta-1+s}\\
	\leq & \sum_{\theta=0}^{d} (d-\theta)^{3/2}	  \begin{pmatrix}d \\ \theta\\ \end{pmatrix} 
	\frac{2^{-q}}{1-2^{-q}}  2^{d-1}  2^{-q(N-d+1)}  2^{-\theta(q+1)} \\
	\leq& \sum_{\theta=0}^{d} (d-\theta)^{3/2}	  \begin{pmatrix}d \\ \theta\\ \end{pmatrix} 
	2^{d-1}  2^{-q(N-d+1)}   2^{-\theta(q+1)} \\
	\leq & d^{3/2} \, 2^{d-1} \, 2^{-q(N-d+1)}  \left( 1+2^{-(q+1)}\right)^d.
	\end{align*}
	The fourth line is based on the formula $\sum_{|\bn|_1=s, \bn\in \mathbb{N}_{0}^d} 2^{|\bn|_{\infty}}\leq d \, 2^{d-1+s}$ given in  \cite{schwab2008sparse}. This tells us that
	\begin{align*}
	|\cI^{P,M}_{N} [f] - \widehat{\cI}^{P,M}_{c,N}[f] | _{H^{1}(\Omega_{N})}  
	\leq & d^{3/2}  \, \bar{C}^{d} (2+2^{q+2})^{d} \, 2^{d-1} \left( 1+2^{-(q+1)}\right)^d \, 2^{-q(N-d+1)}  |f|_{W^{q+1,M,2}(\Omega)}  .
	\end{align*}


\bibliographystyle{abbrv}
\bibliography{ref_sparse_VM,refer,ref_cheng,ref_cheng_2,n}

\end{document}